\DeclareSymbolFont{bighat}{U}{zeus}{m}{n}
\DeclareMathAccent{\bighat}{\mathalpha}{bighat}{222}
\newcommand\soutc{\bgroup\markoverwith{\textcolor{cyan}{\rule[0.4ex]{1pt}{1.7pt}}}\ULon}
\renewcommand{\@fnsymbol}[1]{%
\ifcase#1\hbox{} \or \ding{118} \or \ding{168} \or \ding{74} \or \ding{116} \or \ding{110} \or \ding{94} \or \ding{115} \or \ding{108} \or \ding{68} \or \ding{117}
\else\@ctrerr\fi\relax}
\DeclareSymbolFont{sfoperators}{T1}{cmss}{m}{n}
\DeclareSymbolFontAlphabet{\mathsf}{sfoperators}
\renewcommand{\operator@font}{\mathgroup\symsfoperators}
\DeclareMathOperator{\lpow}{LPOW}
\DeclareMathOperator{\mpow}{MPOW}
\DeclareMathOperator{\ppow}{PPOW}
\DeclareMathOperator{\pow}{POW}
\DeclareMathOperator{\Char}{char}
\DeclareMathOperator{\Def}{def}
\DeclareMathOperator{\Sp}{Spec}
\newcommand{\N}{\mathbb{N}}
\newcommand{\Z}{\mathbb{Z}}
\newcommand{\Q}{\mathbb{Q}}
\newcommand{\F}{\mathbb{F}}
\newcommand{\Zm}{\mathbb{Z}^{+}}
\newcommand{\Spf}{\Sp^{\circ}}
\newcommand{\pZ}{p^{\mathbb{Z}}}
\newcommand{\CA}{\mathcal{A}}
\newcommand{\CB}{\mathcal{B}}
\newcommand{\CH}{\mathcal{H}}
\newcommand{\CP}{\mathcal{P}}
\newcommand{\Fa}{\mathfrak{a}}
\newcommand{\Fb}{\mathfrak{b}}
\newcommand{\Fc}{\mathfrak{c}}
\newcommand{\Fg}{\mathfrak{g}}
\newcommand{\Fh}{\mathfrak{h}}
\newcommand{\Fm}{\mathfrak{m}}
\newcommand{\Fn}{\mathfrak{n}}
\newcommand{\Fp}{\mathfrak{p}}
\newcommand{\Fq}{\mathfrak{q}}
\bmdefine{\VV}{\mathcal{V}}
\bmdefine{\XX}{\mathcal{X}}
\bmdefine{\ZZ}{\mathcal{Z}}
\DeclareMathOperator{\TS}{T\kern-2.27bpS}
\numberwithin{equation}{section}
\def\print@backrefs#1{\space\SentenceSpace[cited on page(s) \csname br@#1\endcsname]}
\setlist[enumerate,1]{label=\texttt{\textup{\alph*.}}}
\newaliascnt{formulac}{equation}
\crefname{formulac}{formula}{formulas}
\def\endformula{\eqno \hbox{\@eqnnum}$$\@ignoretrue}
\crefname{subsection}{subsection}{subsections}
\theoremstyle{plain}
\newtheorem{dummy}{Dummy}[section]
\newtheorem{theoremzz}[dummy]{Theorem}
\newenvironment{theorem}[1][]{\begin{theoremzz}[#1]\begin{leftbar}}{\end{leftbar}\end{theoremzz}}
\newtheorem{corollaryzz}[dummy]{Corollary}
\newenvironment{corollary}[1][]{\begin{corollaryzz}[#1]\begin{leftbar}}{\end{leftbar}\end{corollaryzz}}
\newtheorem{propositionzz}[dummy]{Proposition}
\newenvironment{proposition}[1][]{\begin{propositionzz}[#1]\begin{leftbar}}{\end{leftbar}\end{propositionzz}}
\newtheorem{lemmazz}[dummy]{Lemma}
\newenvironment{lemma}[1][]{\begin{lemmazz}[#1]\begin{leftbar}}{\end{leftbar}\end{lemmazz}}
\theoremstyle{definition}
\newtheorem{definitionzz}[dummy]{Definition}
\newenvironment{definition}[1][]{\begin{definitionzz}[#1]\begin{leftbar}}{\end{leftbar}\end{definitionzz}}
\newtheorem{examplezz}[dummy]{Example}
\newenvironment{example}[1][]{\begin{examplezz}[#1]\begin{leftbar}}{\end{leftbar}\end{examplezz}}
\newtheorem{remarkzz}[dummy]{Remark}
\newenvironment{remark}[1][]{\begin{remarkzz}[#1]\begin{leftbar}}{\end{leftbar}\end{remarkzz}}
\crefname{theoremzz}{theorem}{theorems}
\crefname{corollaryzz}{corollary}{corollaries}
\crefname{propositionzz}{proposition}{propositions}
\crefname{lemmazz}{lemma}{lemmas}
\crefname{definitionzz}{definition}{definitions}
\crefname{examplezz}{example}{examples}
\crefname{remarkzz}{remark}{remarks}
\newlist{enumtheorem}{enumerate}{1}
\setlist[enumtheorem]{label=\texttt{\textup{\alph*.}},ref=\thetheoremzz\texttt{\textup{\alph*}}}
\newlist{enumproposition}{enumerate}{1}
\setlist[enumproposition]{label=\texttt{\textup{\alph*.}},ref=\thepropositionzz\texttt{\textup{\alph*}}}
\newlist{enumlemma}{enumerate}{1}
\setlist[enumlemma]{label=\texttt{\textup{\alph*.}},ref=\thelemmazz\texttt{\textup{\alph*}}}
\newcounter{subcreftmpcnt}%
\newcommand\alphsubformat[1]{\texttt{\textup{\alph{#1}}}}
\newcommand\subcref[2][\alphsubformat]{%
\ifcsname r@#2@cref\endcsname
\cref@getcounter {#2}{\mylabel}%
\setcounter{subcreftmpcnt}{\mylabel}%
\alphsubformat{subcreftmpcnt}%
\else ?? \fi}
\begin{document}

\title[Definability of integers and interpretability in a class of polynomial rings]{Interpretability and uniform definability of integers, and undecidability of reduced indecomposable polynomial rings}

\author{Marco Barone}
\address{Departamento de Matemática\\Universidade Federal de Pernambuco\\Avenida Jornalista Aníbal Fernandes, S/N - Cidade Universitária\\Recife/PE - Brasil - 50740-560}
\email[M.~Barone]{marco@dmat.ufpe.br}

\author{Nicolás Caro}
\address{Departamento de Matemática\\Universidade Federal de Pernambuco\\Avenida Jornalista Aníbal Fernandes, S/N - Cidade Universitária\\Recife/PE - Brasil - 50740-560}
\email[N.~Caro]{jorge.caro@dmat.ufpe.br}

\author{Eudes Naziazeno}
\address{Departamento de Matemática\\Universidade Federal de Pernambuco\\Avenida Jornalista Aníbal Fernandes, S/N - Cidade Universitária\\Recife/PE - Brasil - 50740-560}
\email[E.~Naziazeno]{eudes@dmat.ufpe.br}

\subjclass[2010]{03B10,13B25,13F99,13L05,16U99.}

\begin{abstract}

We prove first-order definability of the prime subring inside polynomial rings, whose coefficient rings are (commutative unital) reduced and indecomposable. This is achieved by means of a uniform formula in the language of rings with signature $(0,1,+,\cdot)$. In the characteristic zero case, the claim implies that the full theory is undecidable, for rings of the referred type; in this direction, we also provide a separate proof of the undecidability of these rings that works uniformly in any characteristic. These definability and undecidability assertions extend a series of results by Raphael Robinson (1951), holding for certain polynomial integral domains, to a more general class. Finally, we show that the rational integers are interpretable in these rings, even in positive characteristic.

\end{abstract}

\maketitle

\tableofcontents

\section{Introduction}

\noindent Over more than 60 years, the problem of defining rational integers inside a ring has been object of extensive investigation (for an overview, we refer the reader to the surveys \cites{Koenigsmann2014,PheidasZ2008,Poonen2008,Shlapentokh2011}). Much attention has been drawn onto Diophantine definability, for this would yield a counterpart result about other versions of Hilbert's tenth problem (see \cite{Matijasevic1970}). More specifically, Diophantine definability implies the undecidability of polynomial equations over $\Z$.

In a similar vein, first-order (not necessarily Diophantine) definability of integers in a characteristic zero ring is known to imply that the full first-order theory of such a ring is undecidable. For instance, Julia Robinson showed that $\Z$ is first-order definable in $\Q$ (\cite{RobinsonJ1949}). Concerning negative results, it was recently proved (\cite{AschenbrennerKNS2018}*{Lemma 4.7}) that the direct product of two infinite finitely generated rings is not bi-interpretable with $\Z$; the proof of this result can be mimicked to obtain, for example, that $\Z$ is not definable in $\Z\times\Z$.

The same questions arise within the class of polynomial rings over integral domains. Raphael Robinson (\cite{RobinsonR1951}*{\S 4d}) proved the undecidability of polynomial integral domains. Jan Denef in \cites{Denef1978,Denef1979} proves that, given an integral domain $R$ of characteristic zero (resp.~characteristic $p$), the problem of solvability in $R[T]$ of polynomials with coefficients in $\Z[T]$ (resp.~$(\Z/p\Z)[T]$) is undecidable. Furthermore, Thanases Pheidas and Karim Zahidi in \cite{PheidasZ1999} work with the language of the rings augmented by a symbol for the nonconstant polynomials, proving undecidability of the positive existential theory of polynomial rings over integral domains. Recently, Javier U\-tre\-ras proved interpretability of integers in polynomial rings over GCD domains, in a modified language (\cite{Utreras2019}).

However, except for the case of finitely generated $\Z$-algebras (\cite{AschenbrennerKNS2018}*{Corollary 2.19 and Subsection 6.3}) we have no knowledge of any attempt to extend definability and undecidability results outside the class of integral domains, partly due to the consistent use of field extensions of the quotient field of these rings throughout the results mentioned. In this paper, we work with polynomial rings $S=R[x]$ and formulate a criterion for the definability of the prime subring of $S$, that is, the smallest subring of $S$ (denoted here by $\ZZ_{S}$). In the characteristic zero case, $\ZZ_{S}$ is exactly $\Z$, and in positive characteristic it coincides with some quotient $\Z/n\Z$. Besides definability of $\ZZ_{S}$, we also prove undecidability of the full theory of such rings.

We put aside the assumption that $R$ be an integral domain, and explore a wider range of coefficient rings, which is in fact a natural class to which to extend the results, namely, the class of reduced indecomposable (commutative unital) rings (\Cref{lpowx=powximplies}). In general, any Noetherian reduced ring can be written out as a finite product of such rings (\cite{Cohn2003}*{Proposition 4.5.4}), so we may consider these rings as the basic bricks for building up an important class of objects in commutative algebra, corresponding to the notion of connected components of reduced schemes in algebraic geometry.

This work is divided as follows:

In \Cref{definitions}, we establish standard definitions and notation from Logic and Algebra that are going to be used throughout the paper, and we discuss some basic properties.

In \Cref{sectionlpow} we explore first-order definability of sets of powers, by introducing the concept of \emph{logical powers}, that is, a first-order property that coincides with the property of being a positive power of a given element of a ring, under some special conditions on both the element and the ring, mainly focusing on the case of polynomial rings in one variable.

In \Cref{sectionredindec}, we investigate such special conditions, and study the class of reduced indecomposable rings, proving several of its algebraic properties; we also provide examples of such rings that are not integral domains, both Noetherian and non-Noetherian.

In \Cref{sectionlpowR[x]} we use the theory developed in \Cref{sectionlpow} and \Cref{sectionredindec} to construct four special definable sets of polynomials with coefficients in a reduced indecomposable ring, which are crucially used in \Cref{sectionZdefin} in the proofs of the main results (by using explicit definitions for sets of powers of a fixed element).

In \Cref{sectionZdefin} we first prove the undecidability of the full theory of $R[x]$, whenever $R$ is a reduced indecomposable ring, by extending the scope of a technique firstly presented by Raphael Robinson in \cite{RobinsonR1951}*{\S\S 4b,4c}. Afterwards, we present a general criterion to define sets of exponents of powers of suitable elements. We specialize this criterion to reduced indecomposable polynomial rings, in two different versions, corresponding to two different subclasses of such polynomial rings.

The first version provides a uniform formula that ensures the definability of the prime subring, upon the condition that the nonzero integers are invertible. This condition is satisfied by all polynomial rings over a field or over reduced indecomposable rings of positive characteristic; the second one no longer relies on this condition, and it also provides a uniform formula, which works for polynomial rings over reduced indecomposable nonfields of characteristic zero. Afterwards, we gather the two formulas previously obtained into a single uniform formula defining the prime subring of $S=R[x]$, for any reduced indecomposable (commutative unital) ring $R$.

We end \Cref{sectionZdefin} by showing how the technique defined to extract exponents from sets of powers can be exploited to construct two-dimensional interpretations of the rings of the class considered, in two different ways covering, respectively, the case when the coefficient ring is a nonfield and that in which it is a field of characteristic zero. Finally, when the coefficient ring is a field of positive characteristic, we still provide a two-dimensional interpretation by means of a separate technique, involving properties of the set of linear polynomials.

This paper ends with \Cref{sectionAppendix}, a complementary collection of several properties of algebraic and logical nature involving the concepts defined throughout the work, as well as examples (be they revisited or novel) illustrating the variety of the objects attained by our results and counterexamples testing the limits of our hypotheses.

All our results and proofs are developed in the framework of Zermelo--Fraenkel (\textsf{ZF}) set theory; in particular, they do not depend on \textsf{AC} or any choice principle\footnote{However, some interesting issues concerning choice principles arise in \Cref{SpecBPI}, \Cref{C(X|B),B^I} and \Cref{nonstronglocal}.}\!.

\subsection*{Acknowledgements}

\noindent We would like to express our sincere thanks and appreciation to Thomas W.~Scanlon, Alexandra Shlapentokh, and Carlos Videla, for their kindness and inspiring advice. We are also indebted to Remy van Dobben de Bruyn and William F.~Sawin (from Math Overflow) and Robin Denis Arthan (Math Stack Exchange) for their help with some questions we raised on the websites mentioned. The second author is supported by FACEPE Grant APQ-0892-1.01/14.

\section{Preliminary definitions and notation}\label{definitions}

\noindent In this section we recall some basic notions from ring theory which will be used throughout this work (see \cite{Hungerford1980} for a background). We also discuss some logical issues concerning the axioms for reduced and/or indecomposable rings, and concerning the notion of ``integers'' in a given ring, as well as its definability. More specifically, we distinguish between zero and positive characteristic.

Except for \Cref{Feixistico}, all rings considered are commutative, unital and nonzero. Except in a few cases where emphasis is required, we denote the additive unit of a ring $S$ by $0$ instead of $0_{S}$, and similarly we denote by $1$ the multiplicative unit of $S$ (instead of $1_{S}$). Note that a ring is nonzero precisely when $1\neq 0$.

We will work in the first-order theory in the language of rings, with signature $(+,\cdot,0,1)$. Unless the dependency on parameters is explicitly mentioned, by ``definable'' we mean ``definable without parameters''.

For the sake of brevity and notational convenience, whenever a subset $A$ of a ring $S$ (or, more generally, a property $\CP$) is definable by a formula, say $\psi(\cdot)$, we will write ``$t\in A$'' (or, more generally, that ``$\CP$ holds'') instead of ``$\psi(t)$'' in subsequent formulas; likewise, for two-variable formulas expressing binary relations $\psi(\cdot,\cdot)$ which correspond to algebraic properties, we abbreviate by using classical notation (e.g.~``$s\mid t$'' for divisibility).

Let $S$ be a ring. An element $a\in S$ is said to be \textbf{nilpotent} if $a^{n}=0$ for some $n\geq 1$, and \textbf{idempotent} if $a^{2}=a$; in the latter case, the element $1-a$ is idempotent as well. The ring $S$ is said to be \textbf{reduced} if its only nilpotent element is zero, and \textbf{indecomposable}\footnote{Also referred to, in the literature, as \textbf{directly irreducible}. Indecomposable rings are equivalently (and more customarily) defined as those not isomorphic to the direct product of two nonzero rings.}\! if its only idempotent elements are $0$ and $1$. An element $a\in S$ is said to be \textbf{regular} if, whenever $ab=ac$, with $b,c\in S$, it follows that $b=c$; otherwise, it is said to be a \textbf{zerodivisor}. Notice that invertible elements are always regular. The multiplicative group of invertible elements of $S$ (also called \textbf{units} of $S$) is denoted by $S^{*}$. An \textbf{irreducible} element of $S$ is a nonzero, noninvertible element that cannot be written as a product of two nonunits. Finally, an element $p$ of $S$ is \textbf{prime} if it is nonzero and noninvertible, and whenever $p$ divides a product, it divides some of the factors.

For a ring $R$, we denote the polynomial ring in one indeterminate $x$ with coefficients in $R$ by $R[x]$, and we refer to the elements of $R$, that is, polynomials of degree zero, as the \textbf{constant polynomials}, or the \textbf{constants} of this larger ring (such ``constants'' should not be confused with the symbols of constants of the language of rings). Given $f\in R[x]$, we denote its coefficient of degree $i$ by $f_{i}\in R$. Finally, we will always make clear when we need to distinguish between the element $f\in R[x]$ and its associated polynomial function $f\colon R\to R$.

\subsection{Remarks on local rings}\label{ourlocal}

\noindent We say that a ring $S$ is \textbf{local} if $a+1$ is a unit for every nonunit $a$ of $S$; for example, any polynomial ring $R[x]$ is nonlocal (take $a=x$).

If $S$ is a local ring, then the set of nonunits of $S$ is closed under sums, and consequently it forms an ideal in $S$: indeed, if $b,c\notin S^{*}$, then for any $z\in S$ the element $a=bz-1$ satisfies $a+1=bz\notin S^{*}$, so necessarily $a\in S^{*}$. Since $-cz\notin S^{*}$, it follows that $a\neq -cz$, which amounts to saying that $(b+c)z\neq 1$. As $z$ is arbitrary, this proves that $b+c$ is a nonunit.

Conversely, if the set $\Fm$ of nonunits of a ring $S$ forms an ideal, then $S$ is local, because for any $a\in\Fm$ we have $(a+1)-a=1\notin\Fm$, so necessarily $a+1\notin\Fm$, that is $a+1\in S^{*}$.

Notice that our definition of ``local ring'' (as well as the equivalent characterization just proven: ``nonunits form an ideal'') differs from the standard definition used in commutative algebra and algebraic geometry, namely: a ring is local if it has a unique maximal ideal. We would like to stress that only our definition of ``local ring'' is used throughout the paper to prove the main results: we refrain from using the standard definition of local ring, because such a notion involves maximal ideals, and it is well-known that the existence of such ideals, in any nonzero commutative unital ring, is equivalent to the axiom of choice (\cite{Hodges1979}). In particular, our main results hold unconditionally on \textsf{ZF} and does not require assuming \textsf{AC}\footnote{See \Cref{nonstronglocal} for a thorough discussion concerning the relationship between these notions of ``locality'', which involves an equivalence to \textsf{AC}.}\!.

\subsection{On the theory of reduced/indecomposable rings}\label{Th-red-ind}

\noindent The existence of an idempotent element other than $0$ and $1$ in a ring is clearly a first-order predicate, so that the theory of indecomposable rings is finitely axiomatizable.

As a matter of fact, the same happens with reducedness, even though nilpotency cannot be expressed as a one-variable first-order formula\footnote{See \cite{Hodges1993}*{Exercise 8.5.1} for an example of a ring whose nilradical is not definable.}\!. Indeed, observe that if $a$ is a nonzero nilpotent element of a ring and $n\geq 2$ is its nilpotency index (i.e., the least positive integer such that $a^{n}=0$), then $a^{n-1}$ is a nonzero nilpotent element with nilpotency index $2$. Therefore a ring is reduced if and only if it contains no nonzero element whose square is zero, and this is obviously a first-order predicate.

Clearly, all the remaining ring-theoretic properties described at the beginning of the section, as well as \emph{our notion} of local ring, are first-order definable in the language of rings.

\subsection{The prime subring and its definability}\label{primesubring}

\noindent Let $S$ be a ring. The \textbf{prime subring} of $S$, denoted by $\ZZ_{S}$, is defined to be the smallest subring of $S$. It is not hard to show that $\ZZ_{S}\subseteq S$ is additively generated by $1_{S}$, and it is also the image of the (unique) ring homomorphism $j\colon\Z\to S$. The \textbf{characteristic} of $S$, denoted by $\Char(S)$, is defined to be the unique natural number $n$ such that $\ker j=n\Z$. Thus, $\ZZ_{S}$ is isomorphic to $\Z$ if the characteristic of $S$ is zero, and it is isomorphic to the ring $\Z/n\Z$ of integers modulo $n$ if $\Char(S)=n>0$.

This notion of ``prime subring'' clearly has nothing to do, and should not be confused, with the notion of ``prime element'' mentioned at the beginning of the section. When no ambiguity arises, we denote the prime subring $\ZZ_{S}$ of a ring $S$ simply by $\ZZ$, and we may sometimes refer informally to the elements of $\ZZ$ as the ``integers''. For $m\in\Z$ we will denote $m\cdot1_{S}\in S$, with a slight abuse of notation, simply by $m$, writing ``$m\in S$''. Likewise, we will informally refer to the elements of $\ZZ^{+}=j(\Zm)$ as the ``positive integers''. Notice that $\ZZ^{+}=\ZZ$ when $\Char(S)>0$.

The main goal of this work is to prove the definability of $\ZZ\subseteq S$ for $S$ belonging to a wide class of rings, namely, that of reduced indecomposable polynomial rings. As mentioned in the Introduction, the case of characteristic zero (when $\ZZ=\Z$) implies undecidability of the full theory of the corresponding ring. Regarding positive characteristic, if $\Char(S)=n>0$, then $\ZZ=\Z/n\Z$ is trivially definable, via the formula
\[\gamma_{n}(t)\colon\ \ \bigvee_{i=1}^{n}\ (\,t=\underbrace{1+\cdots+1}_{i\textnormal{ times}}\,)\,,\]
which depends on $n$ in a cumbersome way. Since we are able to construct a \emph{uniform} formula that covers all reduced indecomposable polynomial rings, regardless of the characteristic, we have in particular that, for $\Char(S)=n>0$, our formula does not depend on $n$. Obviously, in positive characteristic, our definability result does not imply undecidability of the full theory, so we resort to an alternative method (see \Cref{undecidable}) to prove undecidability in this case.

\section{A first-order approach to the definability of sets of powers}\label{sectionlpow}

\noindent Let $S$ be a ring. For an element $p\in S$, let $\pow(p)$ denote the set of positive powers of $p$. As will be clearer in \Cref{sectionZdefin}, the first clue for definability of $\ZZ$ comes from the idea of ``logically'' identifying positive integers with the exponents of a fixed element, reducing the task to defining sets of powers of a fixed element of the ring. This has led to the search for a first-order definable notion that approximates that of ``power''.

\subsection{Logical powers: definition and basic properties}

\noindent In this subsection we introduce an intuitive notion of positive power of an element $p\in S$ as a multiple of $p$ whose only divisors, up to units, are also multiples of $p$, together with an additional property which, in the case of polynomial rings and under special conditions, also guarantees monicity (as a monomial in $p$); this condition is encapsulated by \Cref{lpowformula} below. An analogous approach is considered in \cite{RobinsonR1951}*{p.~145}, where it is shown that the same property is satisfied precisely by the nonnegative powers of $p$, whenever $p$ is a prime element and $S$ is an integral domain (see item \subcref{lpowgeneral-d} of \Cref{lpowgeneral} for a slight generalization). We will explore our notion in a more general context where, for suitable conditions on $p$ (\Cref{TandUPV} and \Cref{automorphU}), the set $\pow(p)$ is first-order definable using $p$ as a parameter.

\begin{definition}\label{deflpow}

Let $S$ be a ring. Given $p\in S$, we define the set $\lpow(p)$ of \textbf{logical powers} of $p$ as the set of elements $f\in S$ satisfying:

\begin{itemize}

\item $p$ divides $f$;

\item $p-1$ divides $f-1$;

\item every divisor of $f$ is a unit or a multiple of $p$.

\end{itemize}

\end{definition}

\noindent Observe that $\lpow(p)$ is defined by the one-variable formula $\psi(\cdot,p)$, where $\psi$ is given by
\begin{formula}\label{lpowformula}
\psi(f,s)\colon\ \ s\mid f\ \wedge\ s-1\mid f-1\ \wedge\ \forall g\,[\,g\mid f\rightarrow(\,g\mid 1\ \vee\ s\mid g\,)\,]\,.
\end{formula}
In what follows, we explore the similarities between $\lpow(p)$ (a first-order definable set) and $\pow(p)$ (a set that we want to be first-order definable), in order to justify the expression ``logical powers''. Unfortunately, in the general case the definition of $\lpow(p)$ fails badly in conveying the concept of ``genuine powers'':

\begin{example}\label{ghnotinlpowgh}

If $g,h\in S$ are noninvertible and $h$ is regular, then $gh\notin\lpow(gh)$. In fact, we have that $h$ divides $gh$, but $h$ is neither a unit nor a multiple of $gh$ (if $h=qgh$, then canceling $h$ would imply that $g$ is a unit).

\end{example}

\noindent Another instance in which the two definitions clash is the following: on the one hand, $0\in\pow(p)$ if and only if $p$ is nilpotent; on the other hand, the following result characterizes whether the zero element is a logical power in nonlocal rings, a wide class of rings that includes all polynomial rings (see \Cref{ourlocal}):

\begin{proposition}\label{0inlpow}

Let $S$ be a nonlocal ring. For any $p\in S$, the following are equivalent:

\begin{enumproposition}

\item $0\in\lpow(p)$;

\item Both $p$ and $p-1$ are units;

\item $\lpow(p)=S$.

\end{enumproposition}

\end{proposition}

\begin{proof}\leavevmode

\begin{enumerate}[labelindent=23pt,labelwidth=\widthof{\texttt{a}~$\Rightarrow$~\texttt{b}},itemindent=0em,leftmargin=!]

\item[\texttt{(a}~$\Rightarrow$~\texttt{b):}] We have that $p-1$ divides $0-1=-1$, so $p-1$ is a unit. As $S$ is not local, there exists $s\in S$ such that $s$ and $s+1$ are nonunits. Since $s$ and $s+1$ trivially divide $0$ and $0\in\lpow(p)$, they must be multiples of $p$. Therefore $p$ divides $(s+1)-s=1$.

\item[\texttt{(b}~$\Rightarrow$~\texttt{c):}] If both $p$ and $p-1$ are units, then any element $t\in S$ obviously belongs to $\lpow(p)$, for $t$, as all its divisors, is a multiple of $p$, whilst $p-1$ divides $t-1$.

\item[\texttt{(c}~$\Rightarrow$~\texttt{a):}] Obvious.\qedhere

\end{enumerate}

\end{proof}

\noindent Notice that the hypothesis in \Cref{0inlpow} is only used in the proof of \texttt{a}~$\Rightarrow$~\texttt{b} to prove that $p$ is a unit, whereas \texttt{b}~$\Rightarrow$~\texttt{c}~$\Rightarrow$~\texttt{a}~$\Rightarrow$~``$p-1$ is a unit'' holds for any ring.

\subsection{Consequences of \texorpdfstring{\except{toc}{$\bm{{\operatorname{\textbf{\textsf{LPOW}}}(x)=\operatorname{\textbf{\textsf{POW}}}(x)}}$ in $\bm{R[x]}$}\for{toc}{$\lpow(x)=\pow(x)$ in $R[x]$}}{LPOW(x)=POW(x) in R[x]}}

\noindent The findings from the previous subsection suggest that our attempt at identifying the sets $\pow(p)$ by $\lpow(p)$ could be more successful if we avoid nilpotent and reducible elements. As a matter of fact, under certain hypotheses the two sets coincide, producing a first-order definition of the powers of some types of elements. Before proceeding in this direction, we list some general properties concerning logical powers that will be used in the sequel. At this point, one notation is worth introducing: given two elements $f,p$ of a ring, we say that $f$ is \textbf{infinitely divisible by} $p$ if $f$ is a multiple of arbitrarily large powers of $p$ (equivalently, a multiple of all positive powers of $p$).

\begin{proposition}\label{lpowgeneral}

Let $S$ be a ring, and let $p\in S$.

\begin{enumproposition}

\item\label{lpowgeneral-a} Any element $f$ of $\lpow(p)$ is either infinitely divisible by $p$, or an element of the form $up^{n}$, for some $n\geq 1$ and some unit $u$ satisfying $p-1\mid u-1$. In particular, if $u=1$, then $f\in\pow(p)$.

\item\label{lpowgeneral-b} If $f\in\lpow(p)$ and $u$ is a unit such that $p-1$ divides $u-1$, then $uf\in\lpow(p)$.

\item\label{lpowgeneral-c} If $p$ is either invertible or irreducible, then $p\in\lpow(p)$.

\item\label{lpowgeneral-d} If $p$ is regular and prime, then $\pow(p)\subseteq\lpow(p)$.

\end{enumproposition}

\end{proposition}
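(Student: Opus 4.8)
The plan is to verify all four items by directly unwinding \Cref{deflpow}, the only substantive work occurring in \subcref{lpowgeneral-a}. For that item I would start from $f\in\lpow(p)$ and split into two cases. If $f$ is infinitely divisible by $p$, there is nothing to prove. Otherwise, the set of $m\ge 1$ with $p^m\mid f$ is nonempty (by the first bullet of \Cref{deflpow}) and bounded above (since infinite divisibility fails), hence has a maximum $n\ge 1$; writing $f=p^n u$ with $p\nmid u$, the third bullet of \Cref{deflpow} applied to the divisor $u$ of $f$ leaves only the possibility that $u$ is a unit. To obtain $p-1\mid u-1$, I would feed the second bullet $p-1\mid f-1=p^n u-1$ into the congruence $p\equiv 1\pmod{p-1}$, which gives $p^n u\equiv u$ and therefore $u\equiv 1\pmod{p-1}$. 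The final clause is then immediate: $u=1$ forces $f=p^n\in\pow(p)$.

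For \subcref{lpowgeneral-b} I would simply check the three bullets for $uf$: divisibility by $p$ is inherited from $f$; $p-1\mid uf-1$ follows from $uf-1=u(f-1)+(u-1)$ and the two hypotheses; and since $u$ is a unit, $uf$ and $f$ have the same set of divisors, so the third bullet transfers verbatim. For \subcref{lpowgeneral-c} I would check the bullets for $f=p$: the first two are trivial, and for the third, given a factorization $p=gh$, either $p$ is a unit (so $g$ is a unit) or $p$ is irreducible and one of $g,h$ is a unit — if it is $h$, then $g=ph^{-1}$ is a multiple of $p$. For \subcref{lpowgeneral-d} I would show $p^n\in\lpow(p)$ for every $n\ge 1$: the first two bullets hold because $p\mid p^n$ and $p^n-1=(p-1)(p^{n-1}+\cdots+1)$, and the third reduces to the auxiliary claim that every divisor $g$ of $p^n$ with $p\nmid g$ is a unit. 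I would prove this claim by induction on $n$: the case $n=0$ is trivial, and in the inductive step, writing $p^n=gh$, primality together with $p\nmid g$ gives $p\mid h$, say $h=ph'$; cancelling $p$ (using regularity) yields $g\mid p^{n-1}$, and the inductive hypothesis finishes.

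The only genuinely delicate point is in \subcref{lpowgeneral-a}: justifying the existence of the maximal exponent $n$ and then extracting $p-1\mid u-1$ from $p^n u\equiv 1\pmod{p-1}$. Everything else is routine bookkeeping with \Cref{deflpow}, the one thing to watch being that in \subcref{lpowgeneral-d} regularity of $p$ must be invoked precisely at the cancellation step.
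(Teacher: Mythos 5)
Your proposal is correct and follows essentially the same route as the paper: maximal exponent extraction plus the identity $uf-1=u(f-1)+(u-1)$ for items \texttt{a} and \texttt{b}, the trivial bullet checks for \texttt{c}, and primality-plus-cancellation for \texttt{d}. The only cosmetic difference is that in \texttt{d} you organize the argument as an induction on $n$, whereas the paper extracts the maximal power of $p$ dividing the cofactor and cancels it all at once; the underlying use of primality and regularity is identical.
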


\begin{proof}\leavevmode

\begin{enumerate}

\item If $f$ is not infinitely divisible by $p$, let $n\geq 1$ be the greatest exponent such that $p^{n}\mid f$, so that $f=up^{n}$ for some $u$ not divisible by $p$. Since $u$ divides $f$ and $f\in\lpow(p)$, $u$ must be a unit. Finally, we have $f-1=up^{n}-1=u\cdot(p^{n}-1)+u-1$, and since both $f-1$ and $p^{n}-1$ are multiples of $p-1$, so is $u-1$.

\item Obviously $p$ divides $uf$. Since $p-1$ divides both $f-1$ and $u-1$, it follows that $p-1$ divides $u\cdot(f-1)+u-1=uf-1$. Finally, if $g$ divides $uf$, then $g$ divides $u^{-1}\cdot(uf)=f$. Since $f\in\lpow(p)$, we conclude that $g$ is a unit or a multiple of $p$.

\item It suffices to observe that every divisor of $p$ would be either invertible or an associate of $p$ (hence a multiple of $p$), for the other properties are trivially satisfied.

\item Let $n\geq 1$. Obviously $p\mid p^{n}$ and $p-1\mid p^{n}-1$, and if $g$ is a divisor of $p^{n}$, say $p^{n}=gh$, then $p^{n+1}$ cannot divide $h$ (otherwise we would have, by canceling, that $p$ divides $1$, which contradicts the primality of $p$). Thus, the largest $k$ with $p^{k}$ dividing $h$ must satisfy $k\leq n$. After canceling we get $p^{n-k}=g\bighat{h}$, with $\bighat{h}$ not a multiple of $p$. If $k=n$, then $g$ is invertible; otherwise, $p$ divides $g\bighat{h}$, so necessarily $p$ divides $g$ because $p$ is prime.\qedhere

\end{enumerate}

\end{proof}

\noindent In what follows we will examine the case $S=R[x]$, in order to draw some consequences from the equality $\lpow(x)=\pow(x)$:

\begin{proposition}\label{lpowx=powximplies}

Let $R$ be a ring and consider $R[x]$, the polynomial ring in one variable over $R$. If $x\in\lpow(x)$, then $x$ is irreducible. If in addition one of the inclusions $\lpow(x)\subseteq\pow(x)$ or $\pow(x)\subseteq\lpow(x)$ holds, then $R$ is reduced.

\end{proposition}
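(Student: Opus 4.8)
The plan is to treat the statement as three separate assertions and to use throughout two elementary facts about $R[x]$: the element $x$ is regular (a non-zero-divisor), and $x$ is a nonunit, since every multiple of $x$ has zero constant term.

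\emph{Irreducibility of $x$.} Assuming $x\in\lpow(x)$, I would show that $x$ — which is nonzero and a nonunit — is not a product of two nonunits. If $x=gh$, then $g\mid x$, so by the third clause of \Cref{deflpow} either $g$ is a unit, and we are done, or $x\mid g$, say $g=xg'$. In that case $x=xg'h$, and cancelling the regular element $x$ yields $1=g'h$, so $h$ is a unit. Hence $x$ is irreducible; only $x\in\lpow(x)$ is used here.

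\emph{Reducedness, assuming $\pow(x)\subseteq\lpow(x)$.} Suppose toward a contradiction that $R$ has a nonzero nilpotent $a$, and fix $n\geq 2$ with $a^{n}=0$. From the identity $x^{n}-a^{n}=(x-a)(x^{n-1}+ax^{n-2}+\dots+a^{n-1})$ and $a^{n}=0$ we get $(x-a)\mid x^{n}$. Since $x^{n}\in\pow(x)\subseteq\lpow(x)$, the third clause of \Cref{deflpow} forces $x-a$ to be a unit or a multiple of $x$. But $x-a$ is monic of degree $1$, hence not a unit, and its constant term is $-a\neq 0$, hence it is not a multiple of $x$ — a contradiction. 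So $R$ is reduced.

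\emph{Reducedness, assuming $\lpow(x)\subseteq\pow(x)$.} Again suppose $a\in R$ is a nonzero nilpotent with $a^{n}=0$. The key device is the polynomial $u:=1+a(x-1)$: since $u-1=a(x-1)$ satisfies $(u-1)^{n}=a^{n}(x-1)^{n}=0$, $u$ is a unit of $R[x]$; moreover $x-1\mid u-1$, while $u\neq 1$ because the coefficient of $x$ in $u-1$ equals $a\neq 0$. Now $x\in\lpow(x)$ by hypothesis, so item \subcref{lpowgeneral-b} of \Cref{lpowgeneral} gives $ux\in\lpow(x)$. On the other hand $ux\notin\pow(x)$: from $ux=x^{m}$ we cancel the regular element $x$ to obtain $u=x^{m-1}$, which is impossible, since $m=1$ would force $u=1$ while $m\geq 2$ would make the unit $u$ equal to the nonunit $x^{m-1}$. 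This contradicts $\lpow(x)\subseteq\pow(x)$, so $R$ is reduced.

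\emph{Where the difficulty lies.} The irreducibility part and the first reducedness case are essentially bookkeeping once one thinks of the factorizations $x=gh$ and $x^{n}-a^{n}=(x-a)(\cdots)$. The genuinely delicate point is the last case: one must produce, from a bare nonzero nilpotent, a unit that is simultaneously congruent to $1$ modulo $x-1$ (so that \Cref{lpowgeneral} applies and $ux$ stays inside $\lpow(x)$) and different from $1$ (so that $ux$ falls outside $\pow(x)$); the choice $u=1+a(x-1)$ is what makes both requirements hold at once.
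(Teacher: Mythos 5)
Your proof is correct and follows essentially the same route as the paper: the same unit $u=1\pm a(x-1)$ combined with item \subcref{lpowgeneral-b} of \Cref{lpowgeneral} handles the case $\lpow(x)\subseteq\pow(x)$, and the divisibility $x-a\mid x^{n}$ handles $\pow(x)\subseteq\lpow(x)$. The only cosmetic difference is that you prove irreducibility of $x$ directly from the third clause of \Cref{deflpow} by cancelling the regular element $x$, where the paper invokes the contrapositive of \Cref{ghnotinlpowgh} — the underlying computation is identical.
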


\begin{proof}

We always have that $x$ is nonzero and noninvertible. Since $x$ is regular, every divisor of it will also be regular, and so if $x\in\lpow(x)$, then by using the contrapositive of \Cref{ghnotinlpowgh} we can conclude that $x$ is irreducible.

Let $a\in R$ with $a^{n}=0$ for some $n\geq 1$. We want to prove that if, in addition, $\lpow(x)\subseteq\pow(x)$ or $\pow(x)\subseteq\lpow(x)$, then $a=0$, obtaining in this way that $R$ is reduced. Set $u=1-a\cdot(x-1)$. Note that $u$ divides $1-a^{n}\cdot(x-1)^{n}=1$, that is, $u$ is invertible, and also that $x-1$ clearly divides $u-1$. Consequently, by item \subcref{lpowgeneral-b} of \Cref{lpowgeneral} we have $ux\in\lpow(x)$.

If $\lpow(x)\subseteq\pow(x)$, then $ux=x^{m}$ for some $m\geq 1$, which forces to have $m=1$ and $u=1$, and so $a=0$. Moreover, observe that $x-a$ is not invertible and divides $x^{n}-a^{n}=x^{n}$, and therefore, if $\pow(x)\subseteq\lpow(x)$ (in this case the condition $x\in\lpow(x)$ is superfluous), then $x-a$ must be a multiple of $x$, so again $a=0$.
\end{proof}

\noindent Thus, for a ring $R$, in order to have $\lpow(x)=\pow(x)$, it is necessary that $R$ be reduced and the polynomial $x$ be irreducible in $R[x]$. Later we will see (\Cref{lpowx=powx}) that these conditions are also sufficient, and in the course of the reasoning we will show (see \Cref{bivalente}) that irreducibility of the polynomial $x$ in $R[x]$ is equivalent to indecomposability of $R$.

\section{Reduced and indecomposable rings and some of their algebraic properties}\label{sectionredindec}

\noindent In this section we study some algebraic properties of reduced and/or indecomposable rings. We prove, among other things, that just as integral domains, reduced indecomposable rings have characteristic zero or prime, and we exhibit examples of such rings that are not integral domains. Finally, we prove that constant polynomial functions in reduced indecomposable rings can only come from constant polynomials.

\subsection{Expressing reducedness and indecomposability of rings in terms of the corresponding polynomial rings}

\begin{lemma}\label{basic}

Let $R$ be a ring. Let $f,g\in R[x]$ be nonzero polynomials, and denote their degrees by $d$ and $m$, respectively.

\begin{enumlemma}

\item\label{basic-a} Let $h\in R[x]$, and let $k=\deg(h)$. If $f=gh$, with $d<m+k$, then for all integer $i$ with $1\leq i\leq m+k-d$, the $i$-th power of the leading coefficient of $g$ annihilates the $i$ coefficients of $h$ of highest degrees, that is, $h_{k},\ldots,h_{k-i+1}$.

\item\label{basic-b} Given $h\in R[x]$ and $r\geq 0$, if $x^{r}$ divides $gh$, then $x^{r}$ divides $g_{0}^{r}h$. Moreover, $x^{r}=gh$ implies $g_{0}^{r}=g_{0}^{r+1}h_{r}$.

\item\label{basic-c} If $g$ divides $f$ and $m>d$, then $f$ is annihilated by a power of $g_{m}$. More specifically, if $f=gh$ and $k=\deg(h)$, then $g_{m}^{k+1}f=0$. Consequently, if $R$ is reduced and $f$ is regular, then $g\mid f$ implies $m\leq d$. In particular, whenever the coefficient ring is reduced, divisors of regular constant elements are themselves constants.

\item\label{basic-d} Suppose $R$ is reduced and indecomposable and $g$ divides $f$. If the leading coefficient of $f$ is a unit, then that of $g$ must be a unit too.

\end{enumlemma}

\end{lemma}

\begin{proof}\leavevmode

\begin{enumerate}

\item Write $f=gh=(g_{m}x^{m}+\cdots+g_{0})(h_{k}x^{k}+\cdots+h_{0})$, with $g_{m},h_{k}\neq 0$. We proceed by induction to prove that multiplying by $g_{m}^{i}$ annihilates $h_{k},\ldots,h_{k-i+1}$ for all $i=1,\ldots,m+k-d$. For $i=1$, the claim follows from $g_{m}h_{k}=f_{m+k}=0$ (recall that $d<m+k$). Suppose the claim holds for $i$ and suppose $i+1\leq m+k-d$. In this case we have $d<m+k-i$, and therefore $0=f_{m+k-i}=g_{m}h_{k-i}+(g_{m-1}h_{k-i+1}+\cdots+g_{m-i}h_{k})$. By induction hypothesis, the second term of this sum is annihilated by $g_{m}^{i}$, as all coefficient of $h$ appearing in it are. Therefore, multiplying by $g_{m}^{i}$, one gets $g_{m}^{i+1}h_{k-i}=0$, and since $g_{m}^{i+1}$ also annihilates $h_{k},\ldots,h_{k-i+1}$, this completes the induction.

\item The result is obvious for $r=0$. For $r>0$, as $gh$ is a multiple of $x^{r}$, we have that all its coefficients in degrees $0,\ldots,r-1$ vanish, so we may apply a specular reasoning to that used in the previous item and get $0=(gh)_{0}=g_{0}h_{0}$ and, if $r>1,0=(gh)_{1}=g_{0}h_{1}+g_{1}h_{0}$, from which $g_{0}^{2}h_{1}=0$ and thus $g_{0}^{2}$ annihilates $h_{0}$ and $h_{1}$. By proceeding analogously until $r-1$ we obtain that $g_{0}^{r}$ annihilates $h_{0},\ldots,h_{r-1}$ and therefore all coefficients of $g_{0}^{r}h$ vanish until degree $r-1$, which yields the first claim. In the special case where $x^{r}=gh$ we also have $1=(gh)_{r}=g_{0}h_{r}+(g_{1}h_{r-1}+\cdots+g_{r}h_{0})$; after multiplying by $g_{0}^{r}$, the second term of the right side vanishes, giving $g_{0}^{r}=g_{0}^{r+1}h_{r}$.

\item If $d<m$ and $h$ is as in item \subcref{basic-a}, we can apply such result to $i=k+1\leq m+k-d$ and get that $g_m^{k+1}$ annihilates $h_{k},\ldots,h_{0}$ and, consequently, annihilates $h$. Hence $g_{m}^{k+1}f=(g_{m}^{k+1}h)g=0$. For the second assertion, observe that if we had $m>d$, then $f$ would be annihilated by a power of a nonzero constant (the leading coefficient of $g$), which is also nonzero in a reduced ring. Therefore
$f$ would be a zerodivisor, contradicting the hypothesis. The last statement follows immediately.

\item Let $f=gh$, with $h\in R[x]$. In the case $d=m+k$ we have $f_{d}=g_{m}h_{k}$ and therefore, if $f_{d}$ is invertible, then $g_{m}$ invertible as well. In the case $d<m+k$, letting $i=m+k-d$, we may write the leading coefficient of $f$ as $u=f_{d}=f_{m+k-i}=g_mh_{k-i}+L$, where $L=g_{m-1}h_{k-i+1}+\cdots+g_{m-i}h_{k}$. The item \subcref{basic-a} above may be applied to the index $i$ (because $1\leq i\leq m+k-d$), implying that $h_{k},\ldots,h_{k-i+1}$ are annihilated by $g_{m}^{i}$, and therefore $g_{m}^{i}L=0$.

If $u$ is a unit, so is $u^{i}=(g_{m}h_{k-i}+L)^{i}=g_{m}^{i}h_{k-i}^{i}+LM$, for some $M\in R$. Multiplying by $v=u^{-i}$, we have $1=vg_{m}^{i}h_{k-i}^{i}+vLM$. By setting $e=vg_{m}^{i}h_{k-i}^{i}$ and $e'=vLM$ we have written $e+e'=1$, and since $g_{m}^{i}L=0$, it follows that $ee'=0$. Therefore $e$ and $e'$ are idempotent, and since $R$ is indecomposable, one of them must be $1$. We also have $g_{m}^{i}\neq 0$ because $R$ is reduced, and since $g_{m}^{i}e'=(vM)\cdot(g_{m}^{i}L)=0$, we conclude that $e'$ is a zerodivisor and, consequently, $e'\neq 1$. This forces $1=e=vg_{m}^{i}h_{k-i}^{i}$, and thus $g_{m}$ is a unit.\qedhere

\end{enumerate}

\end{proof}

\begin{proposition}\label{redunits}

For a ring $R$, the following conditions are equivalent:

\begin{enumproposition}

\item $R[x]$ is reduced;

\item $R$ is reduced;

\item $R[x]^{*}=R^{*}$.

\end{enumproposition}

\end{proposition}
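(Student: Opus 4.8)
The plan is to run the cycle \texttt{a}~$\Rightarrow$~\texttt{b}~$\Rightarrow$~\texttt{c}~$\Rightarrow$~\texttt{a}, every step being elementary. The implication \texttt{a}~$\Rightarrow$~\texttt{b} is immediate: $R$ embeds in $R[x]$ as the subring of constants, and any subring of a reduced ring is reduced.

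For \texttt{b}~$\Rightarrow$~\texttt{c}, the inclusion $R^{*}\subseteq R[x]^{*}$ holds over any ring, so only $R[x]^{*}\subseteq R^{*}$ needs an argument. A unit $f$ of $R[x]$ divides $1$, which is a regular constant, and since $R$ is reduced the last assertion of \Cref{basic-a} (divisors of regular constants are themselves constants) forces $f\in R$; comparing constant terms in a relation $fg=1$ then shows that $f$ is a unit of $R$. Invoking \Cref{basic-a} here is the cleanest route because a bare degree count is not available over a ring with zero divisors — leading coefficients of products can vanish — which is precisely the subtlety \Cref{basic} was set up to handle.

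For \texttt{c}~$\Rightarrow$~\texttt{a}, let $f\in R[x]$ satisfy $f^{n}=0$. Then $1+fx$ is a unit of $R[x]$, with inverse $\sum_{k=0}^{n-1}(-fx)^{k}$ — the same device used in the proof of \Cref{lpowx=powximplies} — so by hypothesis $1+fx$ lies in $R^{*}\subseteq R$, i.e.\ it is a constant, which is possible only if $f=0$. Hence $R[x]$ has no nonzero nilpotent element.

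I do not expect a genuine obstacle: the whole statement is a bookkeeping exercise once \Cref{basic-a} and the ``$1+(\text{nilpotent})x$'' trick are in hand. If one prefers, the implication \texttt{b}~$\Rightarrow$~\texttt{a} (which also follows from the cycle) admits the direct classical proof: writing a nonzero $f$ with leading coefficient $f_{m}\neq 0$, the coefficient of $x^{mn}$ in $f^{n}$ is $f_{m}^{n}$ with no cancellation possible in top degree, so $f^{n}=0$ gives $f_{m}^{n}=0$ and hence $f_{m}=0$ by reducedness, a contradiction.
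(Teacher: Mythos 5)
Your proof is correct and follows essentially the same route as the paper: \texttt{a}~$\Rightarrow$~\texttt{b} is immediate, \texttt{b}~$\Rightarrow$~\texttt{c} uses the last assertion of \Cref{basic-a} applied to divisors of the regular constant $1$, and \texttt{c}~$\Rightarrow$~\texttt{a} rests on the observation that $1+(\text{nilpotent})\cdot x$ is a unit of $R[x]$, hence constant by hypothesis, forcing the nilpotent to vanish. The only cosmetic difference is in that last step: you invert $1+fx$ directly via the finite geometric series and conclude $f=0$ in one stroke, whereas the paper uses $(1+xf^{m-1})(1-xf^{m-1})=1$ and descends on the nilpotency index; the underlying idea is identical.
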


\begin{proof}

The implication \texttt{a}~$\Rightarrow$~\texttt{b} is obvious. For \texttt{b}~$\Rightarrow$~\texttt{c}, note that units are precisely the divisors of $1$, which is a regular constant element, and apply the last assertion of \Cref{basic-c}. Finally, if $R[x]^{*}=R^{*}$ and $f\in R[x]$ satisfies $f^{m}=0$, with $m\geq 2$, then $(1+xf^{m-1})(1-xf^{m-1})=1$ implies $1+xf^{m-1}\in R[x]^{*}\subseteq R$, so necessarily $f^{m-1}=0$. Iterating this reasoning we conclude that $f=0$, proving that $R[x]$ is reduced.
\end{proof}

\noindent The next result relates indecomposability of a ring $R$ to a property about its polynomial ring $R[x]$:

\begin{proposition}\label{bivalente}

A ring $R$ is indecomposable if and only if the polynomial $x\in R[x]$ is irreducible.

\end{proposition}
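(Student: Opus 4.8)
The plan is to prove the two implications separately. Throughout, I would first note that $x\in R[x]$ is always nonzero and noninvertible (its constant term is $0$), so that the content of ``$x$ is irreducible'' is precisely the assertion that in every factorization $x=fg$ in $R[x]$ one of $f,g$ is a unit.

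For the direction ``$x$ irreducible $\Rightarrow R$ indecomposable'' I would argue by contraposition. Assuming $R$ carries an idempotent $e\notin\{0,1\}$, so that $1-e$ is also a nontrivial idempotent, I would exhibit the explicit factorization
\[
x=\bigl(ex+(1-e)\bigr)\bigl((1-e)x+e\bigr),
\]
whose verification is a one-line computation using $e^{2}=e$, $(1-e)^{2}=1-e$ and $e(1-e)=0$. To see that neither factor is a unit, I would suppose $h$ inverts $ex+(1-e)$, multiply $\bigl(ex+(1-e)\bigr)h=1$ by $e$ to get $exh=e$, and then compare constant terms to force $e=0$, a contradiction; the symmetric argument (using $1-e\neq 0$) handles the other factor. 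Hence $x$ is reducible.

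For the converse, suppose $R$ is indecomposable and $x=fg$. I would compare the coefficients of degree $0$ and $1$ on both sides to obtain $f_{0}g_{0}=0$ and $f_{0}g_{1}+f_{1}g_{0}=1$. The key step is to multiply the second relation by $f_{0}$ and cancel the term $f_{0}g_{0}=0$, yielding $f_{0}^{2}g_{1}=f_{0}$; squaring $f_{0}g_{1}$ and substituting then shows that $f_{0}g_{1}$ is an idempotent of $R$ (this is essentially the case $r=1$ of \Cref{basic-b}), hence $f_{0}g_{1}\in\{0,1\}$ by hypothesis. If $f_{0}g_{1}=1$, then $f_{0}\in R^{*}$, so $f_{0}g_{0}=0$ forces $g_{0}=0$, i.e.\ $g=x\tilde{g}$ for some $\tilde{g}\in R[x]$; the identity $x=x(f\tilde{g})$ together with the regularity of $x$ in $R[x]$ gives $f\tilde{g}=1$, so $f\in R[x]^{*}$. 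If instead $f_{0}g_{1}=0$, then $f_{0}g_{1}+f_{1}g_{0}=1$ gives $f_{1}g_{0}=1$, and the symmetric argument shows $g\in R[x]^{*}$. Either way $x=fg$ has a unit factor, so $x$ is irreducible.

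The main obstacle lies in the converse direction: since $R$ is permitted to have zero divisors and need not be Noetherian, the degrees of $f$ and $g$ are not under control, so the naive argument available over a domain (additivity of degrees) breaks down. The crux will be to distil from an arbitrary factorization a single idempotent of $R$ — namely $f_{0}g_{1}$ — on which indecomposability can be brought to bear; once that is secured, the remainder uses nothing beyond the fact that $x$ is a non-zero-divisor of $R[x]$.
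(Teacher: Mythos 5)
Your proposal is correct and follows essentially the same route as the paper: the same idempotent $f_{0}g_{1}$ (obtained via the relation $f_{0}^{2}g_{1}=f_{0}$, which is exactly the $r=1$ case of \Cref{basic-b}), the same case analysis using the regularity of $x$, and the same explicit factorization $x=\bigl(ex+(1-e)\bigr)\bigl((1-e)x+e\bigr)$ for the contrapositive direction. The only cosmetic difference is that you check the factors are nonunits by a direct computation rather than by observing that their constant terms are non-unit idempotents, which changes nothing of substance.
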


\begin{proof}

Obviously $x$ is nonzero and noninvertible. Suppose that $R$ is indecomposable, and assume $x=gh$, with $g,h\in R[x]$; we want to show that either $g$ or $h$ is a unit. Set $e=g_{0}h_{1}$ and $e'=g_{1}h_{0}$. We have $e+e'=g_{0}h_{1}+g_{1}h_{0}=(gh)_{1}=(x)_{1}=1$. Furthermore, by the last part of \Cref{basic-b} with $r=1$ we have $g_{0}^{2}h_{1}=g_{0}$, so $e^{2}=(g_{0}h_{1})^{2}=(g_{0}^{2}h_{1})h_{1}=g_{0}h_{1}=e$, and therefore $e$, being idempotent, must be $0$ or $1$ (in a similar way one can show that $e'$ is idempotent). If $e=1$, then $g_{0}\in R^{*}$; since $g_{0}h_{0}=(gh)_{0}=(x)_{0}=0$, it follows that $h_{0}=0$, so $x$ divides $h$, and dividing out the equality $x=gh$ by the regular element $x$, we get that $g$ is a unit. If $e=0$, then $e'=1$, and proceeding analogously we conclude that $h\in R[x]^{*}$.

For the converse, since the only invertible idempotent $f$ in a ring is $f=f^{2}f^{-1}=ff^{-1}=1$, if $e\in R$ is a nontrivial idempotent (that is, other than $0$ or $1$), then $1-e$ is also a nontrivial idempotent, and therefore both $e$ and $1-e$ are nonunits. Thus, the polynomials $g=ex+(1-e)$ and $h=(1-e)x+e$ have noninvertible constant term, so they cannot be units in $R[x]$. Since $x=gh$, we conclude that $x$ is reducible.
\end{proof}

\begin{remark}\label{SpecBPI}

Notice that the argument above proves that, if $x$ has any nontrivial factorization, then it has one as a product of two linear polynomials. Furthermore, by putting together \Cref{redunits,bivalente}, we obtain a characterization of reduced indecomposable rings in terms of a property of the polynomials $1$ and $x$ in $R[x]$: that they both be not a product of two positive degree polynomials. For those acquainted with algebraic geometry, we recall the special meaning that indecomposability has in terms of the topology of the corresponding Zariski affine scheme: a ring $R$ is indecomposable if and only if its prime spectrum $\Sp(R)$ is connected\footnotemark\!. The reader may feel free to check \Cref{reduindecomp} for more equivalent definitions of indecomposability and/or reducedness.

\end{remark}

\footnotetext{For a proof of this equivalence, see \cite{Eisenbud1995}*{Exercise 2.25}. The proof relies heavily upon the Boolean prime ideal theorem (\textsf{BPI}); see \cite{HowardR1998}*{Form 14}.}

\noindent From the very definition of polynomials and their multiplication, it follows that $0$ is the only polynomial infinitely divisible by $x$. This will be used in the proof of the following result, which shares the same spirit of \Cref{redunits}, but concerning indecomposability:

\begin{proposition}\label{RindecR[x]}

For any ring $R$, a polynomial $e\in R[x]$ is idempotent if and only if $e$ is constant and idempotent in $R$. In particular, $R$ is indecomposable if and only if $R[x]$ is indecomposable.

\end{proposition}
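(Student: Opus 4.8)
The plan is to establish the nontrivial implication of the first assertion, namely that an idempotent $e\in R[x]$ is necessarily constant; the reverse implication is trivial, and the indecomposability statement then follows at once, since by the first part the idempotents of $R[x]$ are exactly those of $R$ regarded as constants, so $R[x]$ has a nontrivial idempotent precisely when $R$ does.

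First I would produce a constant idempotent by evaluation: since $x\mapsto 0$ induces a ring homomorphism $R[x]\to R$, the constant term $e_{0}\coloneqq e(0)$ of $e$ satisfies $e_{0}^{2}=e_{0}$ in $R$. Set $h\coloneqq e-e_{0}$, so that $h(0)=0$, i.e.\ $x$ divides $h$; the goal becomes to show that $h=0$. Writing $e^{2}=e$ as $(e_{0}+h)^{2}=e_{0}+h$ and simplifying with $e_{0}^{2}=e_{0}$ gives $h^{2}=h(1-2e_{0})$; since $u\coloneqq 1-2e_{0}$ is a unit, being its own inverse because $u^{2}=1-4e_{0}+4e_{0}^{2}=1$, multiplication by $u^{-1}$ yields the key identity $h=h\cdot(hu^{-1})$.

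Finally I would iterate: putting $k\coloneqq hu^{-1}$ we get $h=hk$, hence $h=hk^{n}$ for every $n\geq 1$. As $x$ divides $h$ and $u^{-1}$ is constant, $x$ divides $k$, so $x^{n}$ divides $k^{n}$ and therefore divides $hk^{n}=h$ for all $n$. Thus $h$ is infinitely divisible by $x$, whence $h=0$ by the observation recorded just above (that $0$ is the only polynomial infinitely divisible by $x$), and $e=e_{0}$ is a constant idempotent, as wanted. I do not expect a genuine obstacle here; the only point requiring attention is that, since $R$ may have zero divisors, one cannot argue through ``the exact power of $x$ dividing $h$'', which is exactly why the final step is phrased via infinite divisibility rather than via leading coefficients.
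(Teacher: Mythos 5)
Your proof is correct and follows essentially the same route as the paper: isolate the constant term $e_{0}$ (idempotent), derive $h^{2}=h(1-2e_{0})$ with $1-2e_{0}$ a self-inverse unit, and iterate to show $e-e_{0}$ is infinitely divisible by $x$, hence zero. The paper phrases the iteration via the idempotency of $(1-2e_{0})px$ and cancels the regular element $(1-2e_{0})x$ at the end, but this is only a cosmetic difference from your $h=hk^{n}$ argument.
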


\begin{proof}

Let $e\in R[x]$ be idempotent. Writing $e=e_{0}+gx$, with $g\in R[x]$, the equality $e=e^{2}$ becomes $e_{0}+gx=e_{0}^{2}+2e_{0}gx+g^{2}x^{2}$, yielding $e_{0}=e_{0}^{2}$, and in particular $(1-2e_{0})gx=(gx)^{2}$. Since $(1-2e_{0})^{2}=1$, it follows that $(1-2e_{0})gx=[(1-2e_{0})gx]^{2}$. Thus $(1-2e_{0})gx=[(1-2e_{0})g]^{n}x^{n}$ for all $n\geq 1$, that is, $(1-2e_{0})gx$ is infinitely divisible by $x$, and so necessarily $(1-2e_{0})gx=0$. Since $(1-2e_{0})x$ is regular, it follows that $g=0$, so $e=e_{0}$ is idempotent in $R$.
\end{proof}

\noindent From \Cref{redunits,RindecR[x]} we obtain the following characterization of reducedness/indecomposability for polynomial rings in an arbitrary set of indeterminates:

\begin{proposition}\label{R[Xfat]}

Let $R$ be a ring and let $\XX$ be a set of indeterminates over $R$. If $S=R[\XX]$, then $S$ is reduced \textup{(}resp.~indecomposable\textup{)} if and only if $R$ is reduced \textup{(}resp.~indecomposable\textup{)}.

\end{proposition}

\begin{proof}

Obviously, if $S$ reduced (resp.~indecomposable), then the subring $R$ of $S$ is also reduced (resp.~indecomposable). Conversely, assume that $R$ is reduced (resp.~indecomposable). Given $f\in S$, there exists a finite subset $\XX'$ of $\XX$ such that $f\in S_{0}$, where $S_{0}=R[\XX']$. \Cref{redunits} (resp.~\Cref{RindecR[x]}), together with induction, shows that $S_{0}$ is reduced (resp.~indecomposable) as well, and therefore $f$ nilpotent (resp.~idempotent) implies $f=0$ (resp.~$f=0$ or $1$), which shows that $S$ is reduced (resp.~indecomposable).
\end{proof}

\noindent Notice that, although our class of rings of the form $S=R[x]$ was initially described in terms of properties of $R$, we now have instead an intrinsic characterization of the same class, regardless of the presentation of $S\cong R'[\XX]$ (that is, independent of the subring $R'$ and the set $\XX$ of indeterminates over $R'$). Consequently, provided that a given ring is polynomial (in any set of variables), all other conditions for membership in our class are first-order axiomatizable in the language of rings (\Cref{Th-red-ind}), without extra symbols for the coefficient ring or the indeterminates. The main results of this paper, that is, definability of the prime subring (\Cref{finalboss}), undecidability of the full theory (\Cref{undecidable}) and interpretability of rational integers (\Cref{interpretability}) are therefore true for ``polynomial reduced indecomposable rings''.

Given an element of a ring that is zero or a unit, it trivially has a positive power dividing the previous corresponding power (actually, this happens for \emph{every} positive power of it). For reduced indecomposable rings, the converse holds. This basic result will be used repeatedly, and we prove it below:

\begin{proposition}\label{cm+1dividescm}

For any reduced indecomposable ring $R$ and any $c\in R$, we have:

\begin{enumproposition}

\item\label{cm+1dividescm-a} If $c^{m+1}$ divides $c^{m}$ for some $m\geq 0$, then $c\in\{0\}\cup R^{*}$.

\item\label{cm+1dividescm-b} If $c\notin\{0\}\cup R^{*}$, then all nonnegative powers of $c$ are pairwise distinct.

\item\label{cm+1dividescm-c} If $R$ is finite, then $R$ is a field.

\end{enumproposition}

\end{proposition}

\begin{proof}\leavevmode

\begin{enumerate}

\item If $c^{m}=c^{m+1}d$, then $(cd)^{m}=c^{m}d^{m}=(c^{m+1}d)d^{m}=(cd)^{m+1}$, hence $(cd)^{m}=(cd)^{m+1}=\cdots=(cd)^{2m}$. Therefore $(cd)^{m}$ is idempotent, hence it equals $1$ or $0$ (because $R$ is indecomposable). If $(cd)^{m}=1$, then $c\in R^{*}$. Otherwise, since $R$ is reduced, it follows that $cd=0$, which implies $c^{m}=c^{m+1}d=c^{m}\cdot(cd)=0$, and therefore $c=0$ (again by reducedness of $R$).

\item If two nonnegative powers of an element $t$ coincide, say $t^{m}=t^{n}$, with $0\leq m<n$, then $t^{m}=t^{m+1}t^{n-m-1}$, so $t\in\{0\}\cup R^{*}$ by item \subcref{cm+1dividescm-a}.

\item If $R$ is finite, then item \subcref{cm+1dividescm-b} implies that $R$ coincides with $\{0\}\cup R^{*}$ and is therefore a field.\qedhere

\end{enumerate}

\end{proof}

\pagebreak

\noindent The following result shows that, like integral domains, reduced indecomposable rings can only have zero or prime characteristic:

\begin{proposition}\label{0orprime}

If $R$ is a reduced indecomposable ring of positive characteristic, then $R$ has prime characteristic. In particular, every nonzero integer in $R$ is invertible.

\end{proposition}

\begin{proof}

The prime subring $\ZZ$ of $R$ is reduced and indecomposable, since $R$ is. If $\Char(R)>0$, then $\ZZ$ is finite, so $\ZZ$ is a field by \Cref{cm+1dividescm-c}, and we know that in this case $|\ZZ|=\Char(R)$ is a prime number.
\end{proof}

\subsection{Examples of reduced and indecomposable rings}

\noindent Clearly, any integral domain is reduced and indecomposable. In this subsection we provide some examples of reduced/indecomposable rings that are not integral domains.

\begin{example}\label{Bpq}

Let $B$ be a ring, and let $p,q\in B$. We are going to impose sufficient conditions on $p$ and $q$ in such a manner that the ring $R=B/(pq)$ be reduced, indecomposable, and not an integral domain.

Suppose firstly that $p\nmid q$ and $q\nmid p$. This implies $pq\nmid p$ and $pq\nmid q$, hence the element $pq$ is not prime, and so $R$ is not an integral domain.

Furthermore, if $p$ and $q$ are prime, then $R$ is reduced: for if $a\in B$ and $n\geq 1$ satisfy $pq\mid a^{n}$, then by primality of $p$ and $q$ we have $p\mid a$ and $q\mid a$, say $a=sp=tq$. As $q$ is prime and $q\nmid p$, we necessarily have $q\mid s$, which shows that $pq\mid a$.

If in addition the ideal $Bp+Bq$ in $B$ is proper, then $R$ is also indecomposable. In fact, if $a\in B$ satisfies $pq\mid a(a-1)$, then $p$ must divide $a$ or $a-1$, and the same for $q$. If $p$ and $q$ do not divide the same factor, then $1=a-(a-1)\in Bp+Bq$, which contradicts our assumption. Therefore $p$ and $q$ both divide either $a$ or $a-1$, which implies $pq\mid a^{2}$ or $pq\mid(a-1)^{2}$. As we already proved reducedness of $B/(pq)$, either $pq\mid a$ or $pq\mid a-1$, as desired.

As concrete examples of rings satisfying the conditions above, we can take $B=\Z[t],p=2,q=t$, or $B=\Q[s,t],p=s,q=t$. In the latter case, we obtain an example of reduced indecomposable characteristic zero ring $R$ which is not a field, but such that every nonzero integer is invertible.

As a final remark, we could replace the hypotheses ``$p\nmid q$ and $q\nmid p$'' by ``$q$ is regular and $q\nmid p$'', which, together with the remaining hypotheses, would still imply that $R$ is reduced, indecomposable, and not an integral domain.

\end{example}

\begin{example}\label{C(X|B)}

For a set $X$ with at least two elements and a ring $B$, let $S=B^{X}$ be the set of $B$-valued functions on $X$. Endowed with componentwise addition and product, $S$ is a ring. On the one hand, if $B$ is reduced, then so is any subring of $S$; on the other hand, if $B$ is indecomposable, then the idempotent elements of a given subring of $S$ are precisely those functions that take only the values $0$ and $1$.

If $B$ is a reduced indecomposable topological ring such that its singletons are closed sets (that is, endowed with a $T_{1}$ topology), and $X$ is a connected topological space, then $R=\mathcal{C}(X,B)$, the subring of $S$ of $B$-valued continuous functions on $X$, is indecomposable: for if $f\in R$ is idempotent, then $X=f^{-1}(\{0\})\cup f^{-1}(\{1\})$ is the disjoint union of two closed sets, so by connectedness of $X$ we must have that $f$ is constant.

Consequently, the existence in $R$ of two continuous functions with disjoint supports provides examples of reduced indecomposable rings that are not integral domains. The last condition is guaranteed in many cases: for instance, if $B=\mathbb{R}$, this holds whenever $X$ separates some pair of disjoint closed sets, which is the case if $X$ is a metric space or a completely regular space or, under certain standard assumptions, whenever $X$ is a normal space\footnotemark\!.

\end{example}

\footnotetext{Urysohn's lemma cannot be proved in \textsf{ZF} (\cite{GoodT1995}*{Corollary 2.2}): the usual proof of this result relies on \textsf{DC}. However, as shown in \cite{Blass1979}*{p.~55}, it suffices to use \textsf{DMC}, the axiom of dependent multiple choice (\cite{HowardR1998}*{Form 106}).}

\begin{example}\label{ZxZ}

Consider the subring $R$ of $\Z\times\Z$ consisting of those pairs $(m,n)$ with $m\equiv n\pmod 2$. Since $\Z\times\Z$ is reduced, so is $R$. Moreover, the idempotents in $\Z\times\Z$ are precisely $(0,0),(1,1),(1,0)$ and $(0,1)$; since $(1,0),(0,1)\notin R$, it follows that $R$ is indecomposable.

\end{example}

\noindent Notice that the main result of this paper (\Cref{finalboss}) implies that $\Z$ is definable in the subring $R[x]$ of the ring $(\Z\times\Z)[x]\cong\Z[x]\times\Z[x]$, where $R$ is as described in \Cref{ZxZ}. In this line of thought, the reader may wonder whether $\Z$ is definable in $(\Z\times\Z)[x]$. Nevertheless, one can extract from the proof of \cite{AschenbrennerKNS2018}*{Lemma 4.7} that this is not the case (actually, that $\Z$ is not even definable in $A\times B$, whenever $A$ and $B$ are characteristic zero rings\footnote{See \cite{Arthan2016} for another proof in the case $A=B=\Z$.}\!). In other words, the condition on the subring $R$ in \Cref{ZxZ} is essential for the definability of $\Z$ in $R[x]$ (see \Cref{ZnotdefinAxB} for details).

\Cref{ZxZ} is just a special case of the following more general class of examples:

\begin{example}\label{B^I}

Let $B$ be a reduced indecomposable ring which is not a field (for example, an integral domain such as $\Z$ or $\F_{p}[t],p$ prime), and let $\Fb$ be a nonzero proper ideal in $B$. Given a set $I$ with more than one element, let $R\subseteq B^{I}$ be the set of $I$-tuples whose entries are pairwise congruent modulo $\Fb$. Since $B^{I}$ is reduced, so is $R$. The set of idempotents in $B^{I}$ is precisely $\{0,1\}^{I}$ and, since $\Fb$ is a proper ideal, it follows that $\{0,1\}^{I}\cap R=\{0_{R},1_{R}\}$, which shows that $R$ is indecomposable.

Finally, for each $i\in I$, denote by $e_{i}$ the $i$-th canonical $I$-tuple in $B^{I}$ taking value $1$ at position $i$ and $0$ elsewhere. If $c$ is a nonzero element in $\Fb$, then $R$ contains two nonzero elements of the form $ce_{i}$ and $ce_{j}$, with $i,j\in I$ and $i\neq j$, whose product is $0$, and this shows that $R$ is not an integral domain.

\end{example}

\noindent Unless $I$ is finite\footnote{If $\Fb=Rc$ is principal and $I=\{1,\ldots,n\}$, then $R$ is the image of the ring $B[x_{1},\ldots,x_{n}]$ under the ring homomorphism $f\mapsto\bigl(f(ce_{1}),\ldots,f(ce_{n})\bigr)$. Thus, $B$ being Noetherian implies that $R$ is Noetherian as well.}\!, the ring $R$ in \Cref{B^I} is not, in general, Noetherian: indeed, if $I$ contains a denumerable subset $\{i_{n}\colon n\in\N\}$ (that is, if $I$ is Dedekind-infinite), $c$ is a nonzero element of $\Fb$ and $\Fc_{n}\subseteq R$ is the ideal generated by $ce_{i_{0}},\ldots,ce_{i_{n}}$, then the ascending chain of ideals $(\Fc_{n})_{n\in\N}$ is not stationary\footnote{If $I$ is merely infinite, then we can only prove that $R$ has a non-finitely generated ideal, namely, that one generated by all the $I$-tuples $ce_{i}$. See \cite{Hodges1974}*{Section 3} for a comparison, in \textsf{ZF}, of the various notions of Noetherianity.}\!.

The reader may notice that the technique shown in \Cref{B^I} also provides examples in positive characteristic (which is necessarily prime, by \Cref{0orprime}). More specifically, for each $p$ prime, the following ring is reduced and indecomposable, has characteristic $p$, and it is not an integral domain:
\[R=\bigl\{(f,g)\in\F_{p}[t]\times\F_{p}[t]\colon t\mid f-g\bigr\}\,.\]

\begin{example}\label{stronglocal}

Let $R$ be a local ring (see \Cref{ourlocal}). If $a\in R$ is idempotent, then we have $(a-1)a=0$; since one of $a-1$ or $a$ is a unit, it follows that $a=0$ or $a-1=0$, which proves that $R$ is indecomposable. This provides more examples of reduced indecomposable rings which are not integral domains, obtained as suitable localizations of further rings at prime ideals\footnotemark\!, such as the germs of rational functions at points lying in more than one irreducible component of a (reduced) algebraic set (e.g.~$R=\bigl(\,\mathbb{C}[x,y]\bigl/(xy)\,\bigr)_{(\overline{x},\overline{y})}$\,).

\end{example}

\footnotetext{If $A$ is a ring and $\Fp$ is a prime ideal in $A$, then the localization $R=A_{\Fp}$ is local. In fact, if $a\in A$ and $s\in A\smallsetminus\Fp$ are such that $a/s$ is not a unit in $R$, then necessarily $a\in\Fp$, and thus $a+s\in A\smallsetminus\Fp$. Therefore $(a/s)+1=(a+s)/s$ is invertible in $R$.}

\subsection{Polynomials \texorpdfstring{\emph{versus}}{\unichar{"1D463}\unichar{"1D452}\unichar{"1D45F}\unichar{"1D460}\unichar{"1D462}\unichar{"1D460}} polynomial functions}

\noindent In this subsection we address the relationship between polynomials in one variable and their corresponding polynomial functions. More specifically, we want to provide a sufficient condition on the coefficient ring that ensures that polynomial constant functions can only come from constant polynomials.

\setcounter{footnote}{0}

If $R$ is a finite ring, then the nonzero polynomial $\prod_{r\in R}(x-r)$ is zero as a function on $R$, so we may restrict our discussion to infinite rings. If $D$ is an integral domain, then any nonzero polynomial $f\in D[x]$ can only have finitely many roots; in particular, if $D$ is infinite, then $f$ does not vanish identically on $R$ (as a polynomial function). For infinite reduced indecomposable rings, the set of roots of a nonzero polynomial may be infinite (take for instance the reduced indecomposable ring of characteristic zero $R=\Z[t]\bigl/(2t)$ in \Cref{Bpq}, and consider the polynomial $\overline{t}\cdot(x^{2}+x)\in R[x]$, vanishing at all integers), yet it can never be all of $R$, as the following result shows\footnote{See \cite{Sawin2014} for a general condition, and \cite{VanDobben2017} for a second-order topological proof, which relies on different notions of Noetherianity (whose equivalence depends on \textsf{DC}) and connectedness of the prime spectrum (which depends on \textsf{BPI}).}\!.

\begin{theorem}\label{constantfpoly}

Let $R$ be a reduced indecomposable ring. Assume that $R$ is infinite, and let $f\in R[x]$. If $f(c)=0$ for all $c\in R$, then $f=0$.

\end{theorem}

\begin{proof}

The case of integral domains was just discussed, so we may assume that $R$ is not a field. Write $f=f_{m}x^{m}+\cdots+f_{0}\in R[x]$, with $m\geq 0$. For $c_{0},\ldots,c_{m}\in R$, let $V(c_{0},\ldots,c_{m})$ be the Vandermonde matrix associated to these elements, that is, the matrix with rows indexed from $0$ to $m$, the $i$-th row being equal to $(1,c_{i}^{\phantom{1}},c_{i}^{2},\ldots,c_{i}^{m-1},c_{i}^{m})$, and for $a\in R$, let $V_{a}=V(a^{0},a^{1},\ldots,a^{m})$.

We claim that if $a\det(V_{a})=0$, then $a\in\{0\}\cup R^{*}$: in fact, recall that $\det(V_{a})=\prod_{0\leq i<j\leq m}(a^{i}-a^{j})$. Since $a^{i}-a^{j}=a^{i}\cdot(1-a^{j-i})$ for each $i$ and $j$ with $0\leq i<j\leq m$, it follows that $\det(V_{a})=a^{k}[1-ag(a)]$ for some $k\geq 1$ and some $g\in\Z[x]$. Therefore $a\det(V_{a})=0$ becomes $a^{k+1}=a^{k+2}g(a)$, and the claim follows from item \subcref{cm+1dividescm-a} of \Cref{cm+1dividescm}.

If $w$ denotes the column vector with entries $f_{0},\ldots,f_{m}$, and $V=V(c_{0},\ldots,c_{m})$, where $c_{0},\ldots,c_{m}$ are arbitrary constants, then $Vw$ is the column vector with entries $f(c_{0}),\ldots,f(c_{m})$, so that $Vw=0$. Multiplying this equality by the adjugate of $V$ yields $\det(V)w=0$, and so for each $i$ we have $f_{i}\det(V)=0$ for any choice of elements $c_{0},\ldots,c_{m}\in R$; in particular $f_{i}\det(V_{f_{i}})=0$, and consequently $f_{i}\in\{0\}\cup R^{*}$. Thus, to prove that all coefficients of $f$ are zero, it suffices to show that none of them is invertible.

If some $f_{i}$ were invertible, then $\det(V)=f_{i}^{-1}\cdot[f_{i}\det(V)]=0$ for all $c_{0},\dots,c_{m}\in R$. Consequently $a\det(V_{a})=0$ for all $a\in R$, so $R=\{0\}\cup R^{*}$, contradicting the assumption that $R$ is not a field.
\end{proof}

\noindent Notice that, in the previous result, none of the two conditions (indecomposability and reducedness) can be removed from the hypothesis. We provide counterexamples in both directions. On the one hand, infinite Boolean rings such as $R=\F_{2}^{\N}$ are reduced but not indecomposable and the nonzero polynomial $x^{2}-x$ vanishes everywhere as a function. On the other hand, $R=\F_{2}[\{x_{i}\}_{i\in\N}]\bigl/(x_{i}x_{j})_{i,j\in\N}$ is indecomposable but not reduced, and the polynomial $(x^{2}-x)^{2}$ is null as a function. The examples above are treated in detail in \Cref{nonconstantfpoly}.

\section{Logical powers in reduced and indecomposable polynomial rings}\label{sectionlpowR[x]}

\noindent In this section we study the properties of the logical powers (see \Cref{deflpow}) of a polynomial for reduced and/or indecomposable coefficient rings.

\subsection{Powers \texorpdfstring{\emph{versus}}{\unichar{"1D463}\unichar{"1D452}\unichar{"1D45F}\unichar{"1D460}\unichar{"1D462}\unichar{"1D460}} logical powers}

\begin{lemma}\label{redinftyp}

Let $R$ be a reduced ring. If $p\in R[x]$ is nonconstant, then no element of $\lpow(p)$ can be infinitely divisible by $p$. If in addition the leading coefficient of $p$ is regular, then $\lpow(p)\subseteq\pow(p)$.

\end{lemma}

\begin{proof}

Let $d=\deg(p)$ and $c\neq 0$ be the leading coefficient of $p$. Since $R$ is reduced, the leading coefficient of $p^{r}$ is $c^{r}\neq 0$, for all $r\geq 1$. Moreover, as $d>0$, for any given $f\in\lpow(p)$ we may find $r\geq 1$ such that $\deg(p^{r})=rd>\deg(f)$.

Suppose by contradiction that $f$ be infinitely divisible by $p$, and hence divisible by $p^{r}$. Item \subcref{basic-c} of \Cref{basic} ensures then that $f$ is annihilated by some power of $c$, say $c^{s}$. Setting $\ell=1+c^{s}p$, we find that $\ell$ divides $f$ (as $\ell f=f$) and that $p$ does not divide $\ell$ (otherwise $p$ would be a nonconstant invertible polynomial, contradicting \Cref{redunits}). Therefore, as $f\in\lpow(p)$, we must have that $\ell$ is invertible. However, we have that $\ell=c^{s}p+1$ is nonconstant, having coefficient $c^{s+1}\neq 0$ in degree $d>0$, and so it cannot be invertible (again by \Cref{redunits}), a contradiction.

After proving that any $f\in\lpow(p)$ cannot be infinitely divisible by $p$, item \subcref{lpowgeneral-a} of \Cref{lpowgeneral} guarantees that $f$ has the form $up^{n}$, for some integer $n\geq 1$ and a unit $u$ satisfying $p-1\mid u-1$. As $u$ is constant (\Cref{redunits}) and $p-1$ has positive degree and leading coefficient $c$, then again by \Cref{basic-c} we have that $u-1$ is annihilated by a power of $c$. Finally, if $c$ is regular, then $u-1$ must be zero and $f\in\pow(p)$, proving the second assertion.
\end{proof}

\begin{corollary}\label{lp=pindom}

If $R$ is reduced and $p\in R[x]$ is nonconstant, prime, and it has a regular leading coefficient, then $\lpow(p)=\pow(p)$. In particular $\lpow(x)=\pow(x)$ when $R$ is an integral domain.

\end{corollary}

\begin{proof}

The fact that the leading coefficient of $p$ is regular implies that $p$ is regular, and therefore we can apply \Cref{lpowgeneral-d} to obtain $\pow(p)\subseteq\lpow(p)$. The reverse inclusion follows from \Cref{redinftyp}.
\end{proof}

\noindent The requirement that $\lpow(x)=\pow(x)$, together with the technique shown in \Cref{singlepOydes}, could be at the base of a specific strategy for definability of integers in polynomial rings. However, \Cref{lp=pindom} above only guarantees that $\lpow(x)=\pow(x)$ for integral domains, where the issue of definability of integers has already been worked out, in a Diophantine way (\cite{Shlapentokh1990}*{Theorem 5.1}). Fortunately, we now have all the tools to characterize the rings $R$ such that, in the polynomial ring $R[x]$, the equality $\lpow(x)=\pow(x)$ holds, obtaining in this way the converse of \Cref{lpowx=powximplies}:

\begin{theorem}\label{lpowx=powx}

Let $R$ be a ring and consider $R[x]$, the polynomial ring in one variable over $R$.

\begin{enumtheorem}

\item\label{lpowx=powx-a} If $R$ is reduced, then $\lpow(x)\subseteq\pow(x)$.

\item\label{lpowx=powx-b} $\pow(x)=\lpow(x)$ if, and only if, $R$ is reduced and indecomposable.

\end{enumtheorem}

\end{theorem}

\begin{proof}\leavevmode

\begin{enumerate}

\item This follows immediately from \Cref{redinftyp}.

\item If $\lpow(x)=\pow(x)$, then \Cref{lpowx=powximplies,bivalente} together imply that $R$ is reduced and indecomposable.

Conversely, suppose that $R$ is reduced and indecomposable. For every $r\geq 1$ we have that $x$ divides $x^{r}$ and $x-1$ divides $x^{r}-1$. Suppose that $x^{r}=gh$, with $g,h\in R[x]$. Following notation as in the beginning of \Cref{definitions}, we are denoting by $g_{0}$ the constant term of $g$ and by $h_{r}$ the coefficient of $x^{r}$ in $h$. Using \Cref{basic-b}, we get that $x^{r}$ divides $g_{0}^{r}h$ and $g_{0}^{r}=g_{0}^{r+1}h_{r}$, hence $g_{0}\in\{0\}\cup R^{*}$ by \Cref{cm+1dividescm-a}.

If $g_{0}=0$, then $x$ divides $g$. Otherwise, $x^{r}$ divides $g_{0}^{-r}\cdot(g_{0}^{r}h)=h$, say $h=x^{r}\bighat{h}$, hence $x^{r}=gh=x^{r}g\bighat{h}$; canceling out $x^{r}$ we conclude that $g$ is invertible. This shows that $x^{r}\in\lpow(x)$ for all $r\geq 1$, that is, $\pow(x)\subseteq\lpow(x)$, and the reverse inclusion follows from item \subcref{lpowx=powx-a}.\qedhere

\end{enumerate}

\end{proof}

\noindent Next, we try to distinguish by a logical formula some elements of $R[x]$ whose logical powers coincide with their positive powers. To this end, it is necessary to exclude elements exhibiting logical powers infinitely divisible by them. One way of doing so, which will be presented in the following subsection, relies on producing a first-order equivalent of the concept of ``powers of two given elements have the same exponent''\footnote{This concept is somewhat outlined in the description of the set $U$ appearing in \Cref{defsTandUPV}, and more explicitly exploited in the proof of \Cref{onlypowconst}. Finally, we are able to fully express it in the first-order language of rings, in a definitive way, when dealing with interpretability of the structure $(\Zm,+,\mid\,)$ in the rings of our class (namely, the formulas $=_{\Gamma}$ in \Cref{interprnonfield,interprcharzero,interpretcharp}).}\!, and exploits and extends the fact that, under reasonable conditions, for polynomials $p$ and $q$ we have that $p-q$ divides $p^{m}-q^{n}$ forces $m=n$.

\subsection{Some convenient sets whose elements have definable sets of powers}

\noindent The goal of this subsection is to construct special definable subsets of a ring $S$, which will end up being useful throughout the paper. When $S=R[x]$, with $R$ reduced and indecomposable, the elements of such sets will turn out to have definable sets of powers. If in addition $R$ is not a field, we are able to show that every constant element in $S$ also has a definable set of powers.

\enlargethispage*{5mm}

\begin{definition}\label{defsTandUPV}

For a ring $S$, we define the following sets:

\begin{itemize}[labelindent=13pt,itemindent=0em,leftmargin=!,itemsep=3mm]

\item $T$ is the set of elements $p\in S$ such that $p$ is irreducible and $ph\in\lpow(p)$ whenever $h\in\lpow(p)$.

\item $U$ is the set of elements $p\in T$ such that:

\begin{itemize}

\item For every $q\in T$ and every $f\in\lpow(p)$, there exists $g\in\lpow(q)$ such that $p-q\mid f-g$;

\item If $a\in S^{*}$ satisfies $p-1\mid a-1$, then $a=1$.

\end{itemize}

\item $P$ is the set of elements $p\in U$ such that $p-1$ is regular.

\item $V$ is the set of elements $p\in U$ such that:

\begin{itemize}

\item $p$ is regular;

\item For any $y,z\in\{1\}\cup\lpow(p)$, if $y-1\mid z-1$ and $z-1\mid y-1$, then $y=z$.

\end{itemize}

\end{itemize}

\end{definition}

\noindent Observe that the sets $T,U,P$ and $V$ are first-order definable and $P,V\subseteq U\subseteq T$. Since irreducible elements are noninvertible by definition, it follows that the sets in \Cref{defsTandUPV} consist of nonunits. Although we will not use the properties of the sets $P$ and $V$ until the following section, we have opted for introducing them altogether in the definition above.

\begin{theorem}\label{TandUPV}

Let $S$ be a ring and let $T,U,P$ and $V$ as in \Cref{defsTandUPV}. For all $q\in T$ we have $\pow(q)\subseteq\lpow(q)$. In addition, if $S=R[x]$, with $R$ reduced and indecomposable, then the following hold:

\begin{enumtheorem}

\item\label{TandUPV-a} $x\in U$.

\item\label{TandUPV-b} $\lpow(p)=\pow(p)$ for every $p\in U$.

\item\label{TandUPV-c} $P$ and $V$ are nonempty; more specifically, we have $x\in P\cap V$.

\end{enumtheorem}

\end{theorem}

\begin{proof}

If $q\in T$, then $q$ is irreducible, hence $q\in\lpow(q)$ by \Cref{lpowgeneral-c}, and if we assume inductively that $m\geq 1$ satisfies $h=q^{m}\in\lpow(q)$, then $q^{m+1}=qh\in\lpow(q)$, by the definition of $T$. This shows that $\pow(q)\subseteq\lpow(q)$.

Suppose that $S=R[x]$, with $R$ reduced and indecomposable.

\begin{enumerate}

\item First, we prove that $x\in T$. Since $R$ is indecomposable, $x$ is irreducible by \Cref{bivalente}. Moreover, as $R$ is also reduced, it follows from \Cref{lpowx=powx-b} that $\lpow(x)=\pow(x)$. Therefore $x\in\pow(x)=\lpow(x)$, and if $h\in\lpow(x)=\pow(x)$, then $h=x^{k}$ for some $k\geq 1$, hence $xh=x^{k+1}\in\pow(x)=\lpow(x)$. Thus, $x\in T$, as desired.

Regarding the remaining conditions for membership in $U$, given $q\in T$ and $f\in\lpow(x)$, we want to find $g\in\lpow(q)$ such that $x-q$ divides $f-g$. Since $\lpow(x)=\pow(x)$, we have $f=x^{n}$ for some $n\geq 1$. Moreover, we already know that $q\in T$ implies $\pow(q)\subseteq\lpow(q)$, and so by taking $g=q^{n}$, we get $g\in\lpow(q)$, and clearly $x-q$ divides $x^{n}-q^{n}=f-g$. Finally, let $a\in S^{*}$ be such that $x-1$ divides $a-1$. By \Cref{redunits} we have $S^{*}\subseteq R$, and therefore $a$ is constant. Writing $a-1=(x-1)\ell$, we can evaluate at $x=1$ to conclude $a=1$. Consequently, $x\in U$.

\item If $p\in U$, then $p\in T$, and consequently $\pow(p)\subseteq\lpow(p)$. For the reverse inclusion, let $f\in\lpow(p)$ and set $q=x\in T$. The first condition in the definition of $U$ guarantees the existence of an element $g\in\lpow(q)=\lpow(x)=\pow(x)$ such that $p-x\mid f-g$, say $g=x^{n}$, with $n\geq 1$.

If $f$ were infinitely divisible by $p$, then $p$ would be constant by \Cref{redinftyp}. Evaluating at $p$ and using that $x-p$ divides $x^{n}-f$, we conclude that $f(p)=p^{n}$. Since $f$ is infinitely divisible by $p$, there is an $h$ such that $f=p^{n+1}h$; in particular we have $f(p)=p^{n+1}h(p)$, so $p^{n+1}$ divides $p^{n}$. \Cref{cm+1dividescm-a} would imply then that $p\in\{0\}\cup R^{*}$, which is absurd since $p$ is irreducible.

The contradiction above, together with item \subcref{lpowgeneral-a} of \Cref{lpowgeneral}, shows that $f=ap^{k}$ for some $k\geq 1$ and some $a\in R[x]^{*}$ with $p-1\mid a-1$; the second condition of the definition of $U$ forces $a=1$ and, consequently, $f=p^{k}\in\pow(p)$.

\item By item \subcref{TandUPV-a} we have $x\in U$, and clearly $x-1$ is regular. Therefore $x\in P$. Concerning the proof of membership of $x$ in $V$, first notice that $x$ is regular. For the remaining condition, we proceed by adapting, to our context, an argument from \cite{RobinsonR1951}*{\S 4b} in what follows.

Let $k,j\geq 0$ be such that $x^{k}-1\mid x^{j}-1$. We claim that $k\mid j$. In fact, if $k=0$, then $0\mid x^{j}-1$, hence $x^{j}=1$, so $j=0$ and thus $k\mid j$. Otherwise we may write $j=qk+r$, with $q\geq 0$ and $0\leq r<k$. Since
\[x^{k}-1\mid x^{j}-1=x^{r}(x^{qk}-1)+x^{r}-1\]
and $x^{k}-1\mid x^{qk}-1$, it follows that $x^{k}-1\mid x^{r}-1$. From general ring theory, if $f\in R[x]$ has regular leading coefficient, then for all $g\in R[x]\smallsetminus\{0\}$ we have $fg\neq 0$ and $\deg(fg)=\deg(f)+\deg(g)$. Taking $f=x^{k}-1$ and taking into account that $r<k$, we get that $x^{k}-1\mid x^{r}-1$ can only occur if $x^{r}-1=0$. Therefore $r=0$, which shows that $k\mid j$ in this case as well.

Finally, let $y,z\in\{1\}\cup\lpow(x)$ be such that $y-1\mid z-1$ and $z-1\mid y-1$. We want to show that $y=z$. Since $\lpow(x)=\pow(x)$ by items \subcref{TandUPV-a} and \subcref{TandUPV-b}, we have $y=x^{k}$ and $z=x^{j}$ for some $k,j\geq 0$, and therefore $x^{k}-1\mid x^{j}-1$ and $x^{j}-1\mid x^{k}-1$. The previous reasoning shows then that $k\mid j$ and $j\mid k$, hence $k=j$, and so $y=z$, as desired.\qedhere

\end{enumerate}

\end{proof}

\begin{remark}\label{automorphU}

Let $S$ be any ring. If $\theta$ is a ring automorphism of $S$, then $\theta$ preserves the logical structure, and therefore the definable sets $T,U,P$ and $V$ of \Cref{defsTandUPV} are invariant under $\theta$, that is, $\theta(T)=T,\theta(U)=U,\theta(P)=P$ and $\theta(V)=V$. If $S=R[x],v\in R^{*}$ and $r\in R$, then the mapping $\theta\colon S\to S$ given by $\theta(f)=f(vx+r)$ is a ring automorphism ($g\mapsto g\bigl(v^{-1}\cdot(x-r)\bigr)$ being its inverse). If $R$ is reduced and indecomposable, then $x\in P\cap V$ by \Cref{TandUPV-c}, and therefore we have $vx+r\in P\cap V\subseteq U$ in this case. In other words, if $L$ denotes the set of automorphic images of $x$, namely
\[L=\{vx+r\colon v\in R^{*},r\in R\}\,,\]
then we have $x\in L\subseteq V\cap P$. In \Cref{interpretability} we will see that $L$ is definable whenever $R$ is a field, and this is crucial to provide a proof of interpretability when $R$ is a field of positive characteristic.

\end{remark}

\noindent The last result of this subsection (\Cref{onlypowconst}) ensures definability of sets of powers of any fixed constant, using the corresponding constant as a parameter, for reduced indecomposable coefficient rings that are not fields. Before proceeding, we need the following technical result:

\begin{lemma}\label{sumnonunits}

Let $R$ be a ring. If $R$ is not a field, then at least one of the following holds:

\begin{itemize}

\item There exists a unit $u$ with $u-1\notin\{0\}\cup R^{*}$.

\item Every element of $R$ is the sum of two nonunits.

\end{itemize}

\end{lemma}

\begin{proof}

If $R$ is local (see \Cref{ourlocal}), then, as it is not a field, we may take $z\notin\{0\}\cup R^{*}$, so that $u=z+1$ must be a unit, satisfying the first property. If $R$ is not local, then nonunits are not closed under sum. Hence, some unit $w$ must be the sum of two nonunits, say $x$ and $y$, and therefore for any $r\in R$ we have that $r=rw^{-1}w=(rw^{-1}x)+(rw^{-1}y)$ is the sum of two nonunits.
\end{proof}

\begin{theorem}\label{onlypowconst}

Let $S=R[x]$, with $R$ being a reduced indecomposable ring that is not a field, and let $U$ be as in \Cref{defsTandUPV}. Given $f\in S$ and $a\in R$, we have that $f\in\pow(a)$ if, and only if, for all $p,q\in U$, there exist $y\in\pow(p)$ and $z\in\pow(q)$, such that:

\begin{itemize}

\item $p-a\mid y-f$;

\item $q-a\mid z-f$;

\item $p-q\mid y-z$.

\end{itemize}

\end{theorem}

\begin{proof}

If $f=a^{n}$, with $n\in\Zm$, then for any $p,q\in U$, by taking $y=p^{n}$ and $z=q^{n}$, one clearly has $p-a\mid y-f,q-a\mid z-f$ and $p-q\mid y-z$. Conversely, let $f\in R[x]$ satisfy the properties listed. We will prove that $f$ is constant as a function on $R$.

Given any two $\rho,\sigma\in R$ and any $\upsilon\in R^{*}$, define the polynomials $p=x-\rho+a$ and $q=\upsilon x-\sigma+a$ and observe that both $p$ and $q$ lie in $L\subseteq U$ (see \Cref{automorphU}). By the properties listed in the hypothesis, there exist elements $y=p^{m}=(x-\rho+a)^{m}$ and $z=q^{n}=(\upsilon x-\sigma+a)^{n}$, where $m$ and $n$ are suitable positive integers depending on $p$ and $q$ (and, of course, on $a$), satisfying:

\begin{itemize}

\item $x-\rho+a-a\mid (x-\rho+a)^{m}-f$;

\item $\upsilon x-\sigma+a-a\mid (\upsilon x-\sigma+a)^{n}-f$;

\item $(x-\rho+a)-(\upsilon x-\sigma+a)\mid p^{m}-q^{n}$;

\end{itemize}

\noindent which yields:

\begin{itemize}

\item $f(\rho)=a^{m}$;

\item $f(\upsilon^{-1}\sigma)=a^{n}$;

\item $(1-\upsilon)x+(\sigma-\rho)\mid (x-\rho+a)^{m}-(\upsilon x-\sigma+a)^{n}$.

\end{itemize}

\noindent In particular we have $f(0)\in\pow(a)$ (just take $\rho=0,\sigma=0$ and $\upsilon=1$).

Fix a triplet $(\rho,\sigma,\upsilon)$ and take any $m=m(\rho,\sigma,\upsilon)$ and $n=n(\rho,\sigma,\upsilon)$ satisfying the conditions above. If $m\neq n$, then $(x-\rho+a)^{m}-(\upsilon x-\sigma+a)^{n}$ has invertible leading coefficient, being $1$ or $-\upsilon^{n}$, and therefore, by \Cref{basic-d}, the last condition can only be satisfied if the leading coefficient of $(1-\upsilon)x+(\sigma-\rho)$ is also invertible. If this does not happen, then we must have $m=n$ and therefore $f(\rho)=a^{m}=a^{n}=f(\upsilon^{-1}\sigma)$.

The above reasoning amounts to saying that, given any $\rho,\sigma\in R$ and any $\upsilon\in R^{*}$, if any of the following conditions holds:

\begin{enumerate}[label=\texttt{(\alph*)}]

\item $\upsilon\neq 1$ and $\upsilon-1\notin R^{*}$;

\item $\upsilon=1$ and $\rho-\sigma\notin R^{*}$,

\end{enumerate}

\noindent then $f(\rho)=f(\upsilon^{-1}\sigma)$.

Take any $r\in R$: we want to prove that $f(r)=f(0)$. By \Cref{sumnonunits}, either there exists a unit $u$ with $u-1\notin\{0\}\cup R^{*}$ or any element of $R$ is the sum of two nonunits. In the first case, condition \texttt{(a)} is satisfied for $\upsilon=u$; taking $\rho=r$ and $\sigma=0$ we conclude that $f(r)=f(\rho)=f(\upsilon^{-1}\sigma)=f(0)$. In the second case, there are two nonunits $s$ and $t$ such that $r=s+t$. Set $\upsilon=1$. Considering $\rho=r$ and $\sigma=s$, we can use \texttt{(b)} to prove that $f(r)=f(1^{-1}\cdot s)=f(s)$. Analogously, considering $\rho=s$ and $\sigma=0$, we can use \texttt{(b)} again to prove that $f(s)=f(1^{-1}\cdot 0)=f(0)$. Thus, $f(r)=f(s)=f(0)$.

We have proven that, in both cases, $f(r)=f(0)$. As $r$ was arbitrarily taken, it follows that $f$ is constant as a function on $R$. Since $R$ is reduced and indecomposable but not a field, it follows from \Cref{cm+1dividescm-c} that $R$ is infinite, and thus \Cref{constantfpoly} ensures that $f=f(0)\in\pow(a)$.
\end{proof}

\begin{remark}

Let $S=R[x]$ be as in \Cref{onlypowconst}. We have that the sets of powers of elements of $U$ coincide with their corresponding sets of logical powers (\Cref{TandUPV-b}), and therefore they are definable, using the corresponding elements as parameters; see \Cref{lpowformula}. Since the condition in the statement of \Cref{onlypowconst} involves quantification over the definable set $U$, we get that the set of positive powers of any constant $a\in R$ is definable in $R[x]$ using $a$ as a parameter. In other words, we proved the following:

\begin{corollary}\label{Phipowerconstant}

Let $S=R[x]$, with $R$ being a reduced indecomposable ring that is not a field. There is a two-variable first-order formula $\Phi(\cdot,\cdot)$ such that, for each $a\in R$, the formula $\Phi(\cdot,a)$ defines the set $\pow(a)$ in $S$. More explicitly, we can take
\begin{align*}
\Phi(t,a)\colon\ \ \forall p\,\forall q\,\bigl(\,&[\,p\in U\ \wedge\ q\in U\,]\rightarrow\exists y\,\exists z\,[\,y\in\lpow(p)\ \wedge\ z\in\lpow(q)\\
&\hspace{-3pt}\wedge\ p-a\mid y-t\ \wedge\ q-a\mid z-t\ \wedge\ p-q\mid y-z\,]\,\bigr)\,.
\end{align*}

\end{corollary}

\end{remark}

\section{The main results}\label{sectionZdefin}

\noindent We end this paper by proving both the undecidability of the full theory and the definability of the prime subring of $R[x]$, whenever $R$ is a reduced indecomposable ring. Undecidability will be obtained by generalizing a method from \cite{RobinsonR1951}. As for definability, clearly it is sufficient to define just the subset $\ZZ^{+}$ of positive integers in $S$. We will initially express the class of reduced indecomposable coefficient rings as a union of two subclasses, for each of which we produce a uniform formula defining $\ZZ^{+}$. Once this is done, we manipulate the two formulas obtained and merge them, in a convenient way, into a unified formula that covers the whole class.

\subsection{Undecidability of the full theory of reduced indecomposable polynomial rings (\emph{d'après} Raphael Robinson)}\label{undecidable}

\noindent Let $S=R[x]$, with $R$ a reduced indecomposable ring, and let $V$ be as in \Cref{defsTandUPV}. In this subsection we exploit the properties of $V$ to prove undecidability of the full theory of $S$, following the reasoning in \cite{RobinsonR1951}*{\S\S 4b,4c}. Our main definability result (\Cref{finalboss}) is a sufficient condition for undecidability in the case of characteristic zero. Nonetheless, the method described in this subsection works regardless of the characteristic.

Let $\beta$ be a two-variable formula in the language of rings, and let $S$ be a ring. If $p\in S$ satisfies

\begin{enumerate}[label=\texttt{(C\arabic*)}]

\item $p$ has all its nonnegative powers distinct,

\item $\beta(\cdot,p)$ defines the set $\{1\}\cup\pow(p)$ in $S$, and

\item For any nonnegative powers $f$ and $g$ of $p$, if $f-1\mid g-1$ and $g-1\mid f-1$, then $f=g$,

\end{enumerate}

\noindent then Robinson is able to translate the structure $(\N,+,\cdot,0,1)$ into the structure $(S,+,\cdot,0,1)$, using $p$ as a parameter, in the following way (see \cite{RobinsonR1951}*{\S 4b}): first, the product in $\N$ is expressed in terms of addition and divisibility in $\N$. Next, elements of $\N$ are encoded as the corresponding nonnegative powers of $p$; note that this uses \texttt{(C1}). In particular, $0_{\N}$ is encoded as $1_{S}$ and $1_{\N}$ is encoded as $p$, which explains the presence of $p$ as a parameter. Moreover, if a variable, say $x$, occurs in a formula, then we translate it as ``$x\in\{1\}\cup\pow(p)$'' (or, equivalently, as ``$\beta(x,p)$''). Finally, addition in $\N$ is realized using the product in $S$ (``law of exponents''), and divisibility $k\mid j$, with $k,j\in\N$, is realized as the divisibility $p^{k}-1\mid p^{j}-1$ in $S$.

Notice that, in the presence of \texttt{(C2)}, condition \texttt{(C3)} is first-order expressible in the theory of rings with parameter $p$. Moreover, Robinson proves that, in the presence of \texttt{(C1)}, condition \texttt{(C3)} guarantees the equivalence ``$k\mid j\iff p^{k}-1\mid p^{j}-1$'' (we use this reasoning in the proof of \Cref{TandUPV-c} above).

Thus, given a formula $\varphi(-)$ in the language $(+,\cdot,0,1)$, we have associated a formula $\varphi_{\beta}(-,p)$ in the language $(+,\cdot,0,1,p)$, with $p$ as a parameter. Under this association, whenever $\varphi$ is a sentence, we have that $\varphi$ holds in the structure $(\N,+,\cdot,0,1)$ if and only if $\varphi_{\beta}(-,p)$ holds in the structure $(S,+,\cdot,0,1,p)$.

In order to get rid of the dependence of the parameter $p$, suppose that there are a two-variable formula $\beta$ and a nonempty definable subset $\VV$ of $S$ such that the pair $(\beta,p)$ satisfies conditions \texttt{(C1)-(C3)} above, for each $p\in\VV$. If $\varphi$ is a sentence, then the formula
\[\bighat{\varphi}\colon\ \ \forall p\,[\,p\in\VV\rightarrow\varphi_{\beta}(p)\,]\]
is also a sentence. Notice that $\varphi$ holds in $(\N,+,\cdot,0,1)$ if and only if $\varphi_{\beta}(p)$ holds in $(S,+,\cdot,0,1,p)$ for each $p\in\VV$. As a consequence, the sentence $\varphi$ holds in the semiring $\N$ if and only if the sentence $\bighat{\varphi}$ holds in the ring $S$. From this, the undecidability of the full theory of $(S,+,\cdot,0,1)$ would follow from the undecidability of the full theory of $(\N,+,\cdot,0,1)$.

\begin{theorem}\label{undecidability}

Let $S=R[x]$, with $R$ reduced and indecomposable. Then $S$ is undecidable.

\end{theorem}

\begin{proof}

It is sufficient to define $\VV$ and $\beta$, as previously discussed. We may take $\VV$ to be the set $V$ introduced in \Cref{defsTandUPV}, and set $\beta$ to be
\[\beta(t,p)\colon\ \ t\in\lpow(p)\ \vee\ t=1\,.\]
We have $V\neq\varnothing$ by \Cref{TandUPV-c}. We claim that every element $p$ of $V$ satisfies conditions \texttt{(C1)-(C3)}. In fact, If $p\in V$, then $p$ is regular and noninvertible (see the commentary right after \Cref{defsTandUPV}), which in turn implies that $p$ satisfies \texttt{(C1)}: indeed, if $p^{k}=p^{j}$ for some $k,j$ with $0\leq j<k$, then canceling out $p^{j}$ on both sides (which is possible by regularity of $p$) would imply $p^{k-j}=1$. This, together with the fact that $k-j>0$, would imply in turn that $p$ is invertible, which is absurd.

For each $p\in U$, with $U$ as in \Cref{defsTandUPV}, we have $\pow(p)=\lpow(p)$ by \Cref{TandUPV-b}. Since $V\subseteq U$ by the definition of $V$, it follows that the formula $\beta(\cdot,p)$ defines the set $\{1\}\cup\pow(p)$ of all nonnegative powers of $p$, and so the pair $(\beta,p)$ satisfies \texttt{(C2)} for each $p\in V$. Finally, the very definition of the set $V$ implies that each element $p$ of $V$ satisfies \texttt{(C3)}.
\end{proof}

\noindent The technique shown in \cite{RobinsonR1951}*{\S 4c} provides one such definable set $\VV$ only in the case in which the coefficient ring $R$ is a field; namely, $\VV$ consists of the nonconstant prime elements of $R[x]$. To compensate this lack of generality, Robinson devises an alternative method (\cite{RobinsonR1951}*{\S 4d}), based on the notion of \emph{essential undecidability}, to prove the undecidability of every polynomial integral domain. Our result supersedes the undecidability results of \cite{RobinsonR1951}*{\S\S 4c,4d}.

It is important to point out that the mapping $\varphi\mapsto\bighat{\varphi}$ \emph{does not} supply an interpretation of $(\N,+,\mid\,,0,1)$ in $(S,+,\cdot,0,1)$ (see \cite{Hodges1993}*{Chapter 5} for a general discussion on interpretations). This happens because, among other things, it does not provide an interpretation for equality of exponents (of powers of elements in $\VV$). In other words, no formula $=_{\Gamma}$ associated with an interpretation $\Gamma$ is provided such that, for any $y,z$ of the form $y=p^{k},z=q^{j}$, with $k,j\in\N$ and $p,q\in\VV$, it is the case that $k=j$ if and only if $=_{\Gamma}(y,z)$ holds.

The methods described at the end of this section, however, do provide a two-dimensional interpretation of $(\Zm,+,\mid\,)$ in any ring of our class, which will automatically produce an interpretation of $(\Zm,+,\,\cdot\,)$.

\subsection{Defining sets of exponents: the first steps}\label{approxexpo}

\noindent In this subsection we provide a first-order technique for extracting ``approximate'' exponents from sets of powers, in the sense that, given a suitable element $p$ in a ring $S$, the (images in $\ZZ^{+}$ of the) exponents of its powers are determined modulo $p-1$. Of course, we are interested in extracting the (actual) images in $\ZZ^{+}$ of the exponents. This will be done in the two next subsections in two different ways, according to whether every nonzero element of the prime subring is invertible, or the coefficient ring is a nonfield of characteristic zero.

We remind the reader that, if $n$ is a positive integer (for example, when appearing as an exponent), then the symbol $n$ is also conventionally used in this work to denote the element $n\cdot 1_{S}$ in $S$, as discussed in \Cref{primesubring}.

\pagebreak

\begin{definition}\label{deflogpB}

Let $S$ be a ring, $p\in S$ and $B\subseteq\pow(p)$. We define the sets
\vspace{2mm}
\begin{align*}
\log_{p}B=&\,\{n\in\ZZ^{+}\colon p^{n}\in B\}\,,\\[2mm]
\log_{p}B+(p-1)S=&\,\{n+(p-1)s\colon n\in\log_{p}B,s\in S\}\,.
\end{align*}

\end{definition}

\noindent Notice that $\log_{p}B+(p-1)S$ is precisely the set of elements $t\in S$, such that $p-1$ divides $t-n$, for some $n\in\log_{p}B$.

In what follows, given a formula defining a set $B$ of powers of a fixed element $p$ such that $p-1$ is regular, we provide a formula that defines the set $\log_{p}B+(p-1)S$. Before we state our preliminary result we define, for $p\in S$ and $n\in\Zm$, the element
\begin{equation}\label{wnp}
w_{n}(p)=p^{n-1}+p^{n-2}+\cdots+p+1\in S\,.
\end{equation}
Observe that $w_{n}(p)$ satisfies the equality $(p-1)w_{n}(p)=p^{n}-1$. Moreover, writing $w_{n}(p)$ as
\begin{equation}\label{wnpcongn}
w_{n}(p)=\begin{cases}
1,\ &\textnormal{if}\ n=1;\\
n+(p-1)\sum_{k=0}^{n-2}(n-1-k)p^{k},\ &\textnormal{otherwise},
\end{cases}
\end{equation}
it follows immediately that $p-1$ divides $w_{n}(p)-n$. These relations are used crucially to prove the main results of this section. We begin our reasoning by introducing a formula, together with a lemma that makes its meaning clearer.

\begin{definition}\label{Lbeta}

For a two-variable formula $\beta$, we define the four-variable formula
\[L_{\beta}(t,p,y,w)\colon\ \ \beta(y,p)\ \wedge\ y-1=(p-1)w\ \wedge\ p-1\mid w-t\,.\]
Given a ring $S$, we denote by $B_{p}$ the subset of $S$ defined by $\beta(\cdot,p)$.

\end{definition}

\begin{lemma}\label{singlepOydes}

Let $S$ be a ring, and let $p\in S$ with $p-1$ regular. With notation as in \Cref{Lbeta}, suppose that $B_{p}\subseteq\pow(p)$.

\begin{enumlemma}

\item\label{singlepOydes-a} Given $t,y,w\in S$, we have that $L_{\beta}(t,p,y,w)$ holds if, and only if, there exists $n\in\log_{p}B_{p}$ such that

\begin{itemize}

\item $y=p^{n}$,

\item $w=w_{n}(p)$, and

\item $p-1$ divides $t-n$.

\end{itemize}

\item\label{singlepOydes-b} The formula $\exists y\,\exists w\,L_{\beta}(\cdot,p,y,w)$ defines the set $\log_{p}B_{p}+(p-1)S$ of elements $t\in S$, such that $p-1$ divides $t-n$ for some $n\in\log_{p}B_{p}$ \textup{(}see \Cref{deflogpB}\textup{)}.

\end{enumlemma}

\end{lemma}

\begin{proof}

We will use the fact that the element $w_{n}(p)=(p^{n}-1)/(p-1)$ is congruent to $n$ modulo $p-1$, which, together with the hypotheses, will allow us to recover the value $n$ modulo $p-1$ from the expression $p^{n}-1$ in a definable way.

\begin{enumerate}

\item Observe that $L_{\beta}(t,p,y,w)$ holds if and only if there exists a positive integer $n$ satisfying:

\begin{itemize}

\item $y=p^{n}\in B_{p}$ (recall that $B_{p}\subseteq\pow(p)$ by hypothesis),

\item $y-1=(p-1)w$, and

\item $p-1$ divides $w-t$.

\end{itemize}

\noindent The chain of equalities
\[(p-1)w_{n}(p)=p^{n}-1=y-1=(p-1)w\,,\]
together with the regularity of $p-1$, implies that the only possible such value of $w$ is $w_{n}(p)$. Thus, $L_{\beta}(t,p,y,w)$ holds if and only if there exists $n\in\log_{p}B_{p}$ such that

\begin{itemize}

\item $y=p^{n}$,

\item $w=w_{n}(p)$, and

\item $p-1$ divides $w_{n}(p)-t$.

\end{itemize}

\noindent Finally, recall that $p-1$ divides $w_{n}(p)-n$, so $p-1$ divides $w_{n}(p)-t$ if and only if $p-1$ divides $[w_{n}(p)-n]-[w_{n}(p)-t]=t-n$.

\item If $\exists y\,\exists w\,L_{\beta}(t,p,y,w)$ holds, then item \subcref{singlepOydes-a} implies that $p-1$ divides $t-n$, for some $n\in\log_{p}B_{p}$, and therefore $t=n+(t-n)\in\log_{p}B_{p}+(p-1)S$. Conversely, if $t=m+(p-1)s$, with $m\in\log_{p}B_{p}$ and $s\in S$, then $L_{\beta}(t,p,y,w)$ is satisfied by taking $y=p^{m}$ and $w=w_{m}(p)$.\qedhere

\end{enumerate}
\end{proof}

\noindent In our setting we have $S=R[x]$, with $R$ reduced and indecomposable. Given $r\in R$, the element $p=x-r+1$ is such that $p-1$ is regular. If $L_{\beta}(t,p,y,w)$ holds, then \Cref{singlepOydes-a} implies $x-r=p-1\mid t-n$, for some $n\in\ZZ^{+}$ possibly depending on $r$. This amounts to saying that $t$, considered as a polynomial function, satisfies $t(r)=n$.

In order to obtain from $L_{\beta}$ a formula that corresponds to ``$t\in\ZZ^{+}$'', we must necessarily bind the variables $y,z$ and $p$. First, we quantify existentially over $y$ and $w$, obtaining an auxiliary value $n\in\log_{p}B_{p}$, and afterwards we vary $p$ in a suitable definable subset containing all the linear polynomials $x-r+1$, with $r\in R$. The first step, besides leaving $n$ dependent on $p$, only specifies it modulo $p-1$. To fix this issue, we will express the class of reduced indecomposable polynomial rings as the union of two subclasses, for each of which a different technique defining $\ZZ^{+}$ is introduced. Both techniques involve making further restrictions on $t$. This will allow us, all in all, to cover our whole class of rings. We point out that the two subclasses considered do indeed overlap, so in particular some of our rings may be treated by any of the two techniques.

The first technique consists of imposing a restriction on $t$ that implies that $t$ is constant, that is, $t\in R$. In this case, $t=t(r)$ for all $r\in R$, and since we already have $t(r)\in\ZZ^{+}$, we are done.

The second technique adds a condition on $t$ implying that the value $t(r)=n$ does not depend on $p$ (equivalently, on $r$; recall that we are taking $p=x-r+1$). In other words, we want to force $t$ to be a constant polynomial function. By doing this, and assuming that the ring $R$ is infinite, we can apply \Cref{constantfpoly} to get $t\in R$, and again we obtain $t\in\ZZ^{+}$.

It is reasonable to expect that the technique showed in \Cref{singlepOydes} can be adapted in order to obtain the definability of the prime subring in other types of rings.

\subsection{The case in which every nonzero integer is invertible}

\noindent In this subsection we develop the first strategy discussed above. More concretely, we obtain the definability of $\ZZ^{+}$ in $R[x]$ when $R$ is a reduced indecomposable ring, provided the definability of a set between $\ZZ^{+}$ and $R$. Particularly, if we take this set as the set of units of $R[x]$ together with zero, this method accounts for all cases in which every nonzero integer in the ring is invertible. This improves the result of \cite{RobinsonR1951}*{\S 2}, which requires that $R$ be a characteristic zero integral domain that is first-order definable in the ring $R[x]$\footnote{This is the case if $R$ is a field or a local domain (see \Cref{ourlocal}): in the first case we have $R=\{0\}\cup R[x]^{*}$; in the second case, $R=\{p\in R[x]\colon p\in R[x]^{*}\textnormal{ or }p+1\in R[x]^{*}\}$.}\!.

\enlargethispage*{5mm}

\begin{proposition}\label{ZisdefinA}

Let $S=R[x]$, with $R$ a reduced indecomposable ring, and let $P$ be as in \Cref{defsTandUPV}. Given a definable subset $A$ of $S$ with $A\subseteq R$, we have that
\begin{align*}
\Theta_{A}(t)\colon\ \ t\in A\ \wedge\ \forall p\,\bigl(\,&p\in P\rightarrow\exists y\,\exists w\,[\,y\in\lpow(p)\\
&\hspace{-3pt}\wedge\ y-1=(p-1)w\ \wedge\ p-1\mid w-t\,]\,\bigr)
\end{align*}
defines the subset $\ZZ^{+}\cap A$. In particular, $\Theta_{A}$ defines $\ZZ^{+}$ whenever $A\supseteq\ZZ^{+}$.

\end{proposition}

\begin{proof}

With notation as in \Cref{Lbeta}, let $\beta=\psi$, where $\psi$ is given by \Cref{lpowformula}, so the subset $B_{p}$ of $S$ defined by $\beta(\cdot,p)$ is equal to $\lpow(p)$. Therefore, the subformula
\[\exists y\,\exists w\,[\,y\in\lpow(p)\ \wedge\ y-1=(p-1)w\ \wedge\ p-1\mid w-t\,]\]
of $\Theta_{A}$ is precisely the formula $\exists y\,\exists w\,L_{\beta}(t,p,y,w)$, with $L_{\beta}(t,p,y,w)$ as in \Cref{Lbeta}.

If $p\in P$, then $B_{p}=\lpow(p)=\pow(p)$ by item \subcref{TandUPV-b} of \Cref{TandUPV}; in particular, $\log_{p}B_{p}=\ZZ^{+}$, regardless of $p\in P$. Moreover, we have that $p-1$ is regular, by the definition of $P$. Thus, we are in the hypotheses of \Cref{singlepOydes-b}, which implies that $\exists y\,\exists w\,L_{\beta}(t,p,y,w)$ holds if and only if the following condition is satisfied:
\begin{equation}\label{Lbiff}
\textnormal{There exists}\ \ n_{p}\in\log_{p}B_{p}=\ZZ^{+}\ \ \textnormal{such that}\ \ p-1\mid t-n_{p}\,.\tag{$\ast$}
\end{equation}
If $t$ satisfies $\Theta_{A}$, then $t\in A$ by definition. Moreover, taking $p=x\in P$ and using \labelcref{Lbiff} we get some $n_{x}\in\ZZ^{+}$ and some $\ell\in R[x]$ such that $t-n_{x}=(x-1)\ell$\ . However, $t-n_x\in R$, because $t\in A\subseteq R$. Thus, evaluating at $x=1$ we conclude that necessarily $t-n_{x}=0$, and consequently $t=n_{x}\in\ZZ^{+}$.

Conversely, let $t=n\in\ZZ^{+}\cap A$. We want to show that $\Theta_{A}(t)$ holds. Obviously $t\in A$, and if $p\in P$, then the element $n_{p}=n$ satisfies $n_{p}\in\ZZ^{+}=\log_{p}B_{p}$ and $p-1\mid 0=t-n_{p}$, so that \labelcref{Lbiff} holds, and therefore $\exists y\,\exists w\,L_{\beta}(t,p,y,w)$ holds as well.\qedhere
\end{proof}

\begin{theorem}\label{Z+invertible}

Let $S=R[x]$, with $R$ a reduced indecomposable ring, and let $P$ be as in \Cref{defsTandUPV}. The formula
\begin{align*}
\Theta(t)\colon\ \ t\in\{0\}\cup S^{*}\ \wedge\ \forall p\,\bigl(\,&p\in P\rightarrow\exists y\,\exists w\,[\,y\in\lpow(p)\\
&\hspace{-3pt}\wedge\ y-1=(p-1)w\ \wedge\ p-1\mid w-t\,]\,\bigr)\,.
\end{align*}
defines the set $\ZZ^{+}\cap(\{0\}\cup S^{*})$, which contains $1_{S}$. In particular, $\Theta$ defines $\ZZ^{+}$ if and only if every nonzero element of $\ZZ^{+}$ is invertible.

\end{theorem}

\begin{proof}

Let $A=\{0\}\cup S^{*}$. \Cref{redunits} implies indeed that $A\subseteq R$, and therefore we can apply \Cref{ZisdefinA}, after observing that $\Theta=\Theta_{A}$.
\end{proof}

\begin{remark}\label{1inCsubsetZ+}

The fact that $\Theta$ defines a subset of $\ZZ^{+}$ containing $1_{S}$ in \emph{arbitrary} reduced indecomposable polynomial rings will play a crucial role at the end of the section, in the construction of a unified formula that works for all such rings.

\end{remark}

\subsection{The case of nonfields of characteristic zero}

\noindent In this subsection we develop the second strategy for defining $\ZZ^{+}$ discussed at the end of \Cref{approxexpo}, which works successfully for the case where the coefficient ring is a (reduced, indecomposable) nonfield of characteristic zero. Since \Cref{Z+invertible} covers, among others, the case in which the coefficient ring is a field or has positive characteristic (the latter by \Cref{0orprime}), the result of this subsection will settle all remaining cases.

By using definability of powers of constants with the constants themselves as parameters (\Cref{Phipowerconstant}), we can strengthen the formula $L_{\beta}$ (see \Cref{Lbeta}), as was made in the previous subsection, but in another manner, in order to get rid of the requirement of having a suitable definable set of constants in $R[x]$ for defining $\ZZ^{+}$.

Notice that this result implies, in particular, the definability of $\Z$ in the ring $\Z[x]$, which is announced in \cite{RobinsonR1951}*{\S\S 3a,3b}, but not directly proved\footnote{The author proves the definability of integers in quadratic rings, and claims that the method of his proof can be slightly modified in order to obtain the corresponding definability result in polynomial rings over the integers or over quadratic rings.} (see \cite{Nies2007}*{Theorem 7.13} for an alternative proof).

\begin{proposition}\label{lambdat=k}

Let $S=R[x]$, with $R$ a reduced indecomposable ring, and let $P$ be as in \Cref{defsTandUPV}. Let $\lambda$ be the three-variable formula defined by
\begin{align*}
\lambda(t,a,b)\colon\ \ \forall p\,[\,&p\in P\rightarrow\exists y\,\exists w\,(\,y\in\lpow(p)\\
&\hspace{-3pt}\wedge\ y-1=(p-1)w\ \wedge\ p-1\mid w-t\ \wedge\ p-a\mid y-b\,)\,]\,.
\end{align*}
Let $a\in R$ be such that all powers of $a$ are distinct. If $k\in\Zm$ is such that $\lambda(t,a,a^{k})$ holds, then $t=k$.

\end{proposition}

\begin{proof}

Our argument resembles closely that of the proof of \Cref{ZisdefinA}: with notation as in \Cref{Lbeta}, let $\beta=\psi$, where $\psi$ is given by \Cref{lpowformula}, so that the subset $B_{p}$ of $R[x]$ defined by $\beta(\cdot,p)$ is precisely $\lpow(p)$. Therefore, the subformula
\[\exists y\,\exists w\,(\,y\in\lpow(p)\ \wedge\ y-1=(p-1)w\ \wedge\ p-1\mid w-t\ \wedge\ p-a\mid y-b\,)\]
of $\lambda(t,a,b)$ is precisely the formula
\[\exists y\,\exists w\,[\,L_{\beta}(t,p,y,w)\ \wedge\ p-a\mid y-b\,]\,,\]
with $L_{\beta}(t,p,y,w)$ as in \Cref{Lbeta}. If $p\in P$, then $B_{p}=\lpow(p)=\pow(p)$ by item \subcref{TandUPV-b} of \Cref{TandUPV}; in particular, $\log_{p}B_{p}=\ZZ^{+}$. Moreover, we have that $p-1$ is regular, by the definition of $P$. Thus, we are in the hypotheses of \Cref{singlepOydes}.

Let $r\in R$ be fixed. We will show that $t(r)=k$. If $p=x-r+1$, then $p\in P$ by \Cref{automorphU}. Since $\lambda(t,a,a^{k})$ holds, there exist $y,w\in R[x]$ such that $p$ satisfies both the formula $L_{\beta}(t,p,y,w)$ and the condition $p-a\mid y-a^{k}$. In particular, \Cref{singlepOydes-a} grants the existence of an element $n\in\log_{p}B_{p}=\ZZ^{+}$ ($n$ possibly depends on $r$) such that $p-1\mid t-n$ and $y=p^{n}$.

Since $p-1=x-r$, the condition $p-1\mid t-n$ becomes $x-r\mid t-n$, which in turn is equivalent to have $t(r)=n$. Since we also have $p-a\mid y-a^{k}$ and obviously $p-a\mid p^{n}-a^{n}$ always holds, we conclude that $p-a$ divides $(y-a^{k})-(p^{n}-a^{n})=a^{n}-a^{k}$ (recall that $y=p^{n}$). Thus, there exists $\ell\in R[x]$ such that $a^{n}-a^{k}=(p-a)\ell=(x-r+1-a)\ell$. After evaluating at $x=r-1+a$ and taking into account that $a^{n}-a^{k}\in R$ (because $a\in R$), we get $a^{n}-a^{k}=0$. As all powers of $a$ are distinct, the equality $a^{n}=a^{k}$ forces $n=k$, hence $t(r)=n=k$, as desired.

Since $k$ is fixed and therefore does not depend on $r$, we have proven that if $\lambda(t,a,a^{k})$ holds, then the polynomial function induced by $t$ has constant value $k$. As all powers of $a$ are distinct, it follows that $R$ is infinite, so we can apply \Cref{constantfpoly} to conclude that $t=k$.
\end{proof}

\begin{theorem}\label{nonfieldZdef}

Let $S=R[x]$, with $R$ being a reduced indecomposable characteristic zero ring which is not a field. Let $U$ be as in \Cref{defsTandUPV}, and let $\Phi(\cdot,\cdot)$ be the formula given in \Cref{Phipowerconstant}, defining powers of constant elements, namely,
\begin{align*}
\Phi(t,a)\colon\ \ \forall p\,\forall q\,\bigl(\,&[\,p\in U\ \wedge\ q\in U\,]\rightarrow\exists y\,\exists z\,[\,y\in\lpow(p)\ \wedge\ z\in\lpow(q)\\
&\hspace{-3pt}\wedge\ p-a\mid y-t\ \wedge\ q-a\mid z-t\ \wedge\ p-q\mid y-z\,]\,\bigr)\,.
\end{align*}
If
\[\Upsilon(t)\colon\ \ \exists b\,[\,\Phi(b,2)\ \wedge\ \lambda(t,2,b)\,]\,,\]
with $\lambda$ as in \Cref{lambdat=k}, then $\Upsilon$ defines $\ZZ^{+}$ in $S$.

\end{theorem}

\begin{proof}

We have, by \Cref{Phipowerconstant}, that for any $a\in R$ the formula $\Phi(\cdot,a)$ defines the set $\pow(a)$. Therefore, if $\Upsilon(t)$ holds, then there exists a positive integer $k$ such that formula $\lambda(t,2,2^{k})$ holds. Since $R$ has characteristic zero, all powers of $2$ are distinct, and therefore we may take $a=2$ in \Cref{lambdat=k}, obtaining $t=k\in\ZZ^{+}$.

Conversely, if $t\in\ZZ^{+}=\Zm$ (recall that $\Char(R)=0$), say $t=n$, then it is easy to see that $\Upsilon(t)$ holds for the choice $b=2^{n}$: more specifically, the reader may check that the formula $\lambda(n,2,2^{n})$ holds by taking, for each $p\in P$ (where $P$ is defined as in \Cref{defsTandUPV}), the values $y=p^{n}$ and $w=w_{n}(p)$.
\end{proof}

\subsection{The unified formula (\texorpdfstring{``}{"}One Formula to define them all'')}\label{definethemall}

\noindent In the previous two subsections we have provided two techniques that define $\ZZ^{+}$ in two different cases (\Cref{Z+invertible,nonfieldZdef}). To sum up, let $\CH$ be the class of reduced indecomposable polynomial rings. Let $\CH_{1}$ be the subclass of rings in $\CH$ where every nonzero integer is invertible, and let $\CH_{2}$ be the subclass of rings in $\CH$ that may be expressed as $R[x]$, where $R$ is a nonfield of characteristic zero. By \Cref{0orprime}, if $S$ is a member of $\CH$ not belonging to $\CH_{1}$, then $S$ belongs to $\CH_{2}$, and this is equivalent to the following identity of classes:
\[\CH=\CH_{1}\cup\CH_{2}\,.\]
We remark that these subclasses do overlap: for example, the ring $R=\Q[s,t]/(st)$ (\Cref{Bpq}) is a reduced indecomposable nonintegral domain (hence a nonfield) of characteristic zero in which every nonzero integer is invertible. Therefore, any of the two techniques developed could be used to define $\ZZ^{+}$ in $R[x]$.

At this point of the paper we have already proven that $\ZZ^{+}$ (and, consequently, the whole prime subring) is definable in all reduced indecomposable polynomial rings. However, depending on whether we work over $\CH_{1}$ or $\CH_{2}$, we resorted to distinct formulas, that were denoted by $\Theta$ and $\Upsilon$, respectively, in order to write out the definition sought.

In what follows we merge $\Theta$ and $\Upsilon$ into a single formula, defining $\ZZ^{+}$ in any reduced indecomposable polynomial ring, covering this way the whole class $\CH$ uniformly. To this end, we begin by constructing an auxiliary sentence characterizing nonmembership in $\CH_{1}$, and therefore forcing membership in $\CH_{2}$.

\begin{lemma}\label{XiXi}

Let $S=R[x]$, with $R$ a reduced indecomposable ring. Let $C=\ZZ^{+}\cap(\{0\}\cup S^{*})$ be the set defined by the formula $\Theta$ as in \Cref{Z+invertible}, and define
\[\Xi\,\colon\ \ \exists t\,(\,t\in C\ \wedge\ t+1\notin C\,)\,.\]
Then $C=\ZZ^{+}$ if and only if $\Xi$ does not hold. Moreover, if $\Xi$ holds in $S$, then $R$ is a nonfield of characteristic zero.

\end{lemma}

\begin{proof}

By \Cref{Z+invertible} we have that $C$ is a subset of $\ZZ^{+}$ containing $1$, and therefore $C=\ZZ^{+}$ if and only if $C$ is closed under the successor function $t\mapsto t+1$, which is equivalent to negating $\Xi$, proving the first assertion. For the second assertion, if $R$ is a field or $R$ has positive characteristic, then every nonzero integer in $S$ is invertible (by \Cref{0orprime} in the latter case). Therefore $C$ coincides with $\ZZ^{+}$ in these cases, and so $\Xi$ is false.
\end{proof}

\noindent What follows is the main result of our work: there is a formula defining the prime subring in all reduced indecomposable rings $R[x]$, regardless of the coefficient ring $R$. As mentioned in \Cref{1inCsubsetZ+}, we stress how the result of \Cref{Z+invertible} plays a critical role in the proof of our final claim, for it guarantees that $1\in C\subseteq\ZZ^{+}$, regardless of the coefficient ring $R$.

\begin{theorem}\label{finalboss}

Let $S=R[x]$, with $R$ a reduced indecomposable ring. Let
\[\Omega(t)\colon\ \ [\,\neg\,\Xi\ \wedge\ \Theta(t)\,]\ \vee\ [\,\Xi\ \wedge\ \Upsilon(t)\,]\,,\]
where $\Theta$ and $\Upsilon$ are the formulas given by \Cref{Z+invertible,nonfieldZdef}, respectively, and $\Xi$ is given by \Cref{XiXi}. We have that $\Omega$ defines the set $\ZZ^{+}$ in $S$.

\end{theorem}

\begin{proof}

Observe that
\[\Omega(t)\colon\begin{cases}
t\in C,&\ \textnormal{if }\,\Xi\,\textnormal{ is false}\,;\\
\Upsilon(t),&\ \textnormal{if }\,\Xi\,\textnormal{ is true}\,,
\end{cases}\]
with $C$ as in \Cref{XiXi}. If $\Xi$ is false, then $C=\ZZ^{+}$ by \Cref{XiXi}. Otherwise, $R$ is a nonfield of characteristic zero, again by \Cref{XiXi}, hence $\Upsilon$ defines $\ZZ^{+}$ by \Cref{nonfieldZdef}. In either case, we have proven that $\Omega(t)$ holds if and only if $t\in\ZZ^{+}$.
\end{proof}

\subsection{Interpretability of positive integers}\label{interpretability}

\noindent Undecidability is often obtained in literature as a consequence of interpretability. In \Cref{undecidable}, however, we have shown a relatively simple technique that proves undecidability without resorting to interpretability. While undecidability of the full theory of $R[x]$ is a concrete computational goal, interpretability of the ring $\Z$ in $R[x]$ is more of a theoretical issue. Since the latter is a stronger property than the former, we devote this subsection to its proof.

Beyond the machinery considered in \Cref{undecidable}, here we further need to find a first-order property that detects whether two powers of elements of a certain definable set, possibly of different bases, have the same exponent. We show how to exploit the notion of logical powers to provide a two-dimensional interpretation of the structure consisting of the set $\Zm$ of positive integers with the usual sum and divisibility, in $S=R[x]$, whenever $R$ is a reduced indecomposable ring. As we already observed in \Cref{undecidable} while proving undecidability of the full theory of our rings, the product can be written, on $\Zm$, in terms of divisibility and sum. Therefore, interpreting equality, sum and divisibility will suffice for our purposes.

We will use three different techniques, according to whether $R$ is either $1)$ a nonfield, $2)$ a characteristic zero field or $3)$ a field of positive characteristic, the case $2)$ being actually a simple adaptation of the exponent-extracting technique developed in \Cref{approxexpo}.

We follow the notation of $n$-dimensional interpretations that can be found in \cite{Hodges1993}*{Section 5.3}, which consists, in our context, of a surjective map $f$ from a suitable definable subset of $S^{n}$ to $\Zm$ and the definition of formulas that are, on this subset, equivalent to the relations on $\Zm$ given by $x=y,x+y=z$ and $x\mid y$, respectively.

The reader may notice that the result on definability of $\ZZ^{+}$, just proven in the previous subsection, would itself provide a one-dimensional interpretation of positive integers in the characteristic zero case, where $\ZZ^{+}$ coincides with $\Zm$. This may simply be achieved by taking, for a surjection, the identity on the definable subset $\ZZ^{+}=\Zm$ of $S$. However, reduced indecomposable rings of characteristic $p>0$ are clearly not covered by this technique, which is only able to define integers modulo $p$, and other methods are therefore necessary to achieve our goal.

Formulas interpreting sum and divisibility will be defined along the lines of \Cref{undecidable} and exploited in our proof of interpretability, but the really nontrivial argument introduced in this subsection is the interpretation of equality in $S$, especially for the subclasses $1)$ and $3)$ mentioned above. In a nutshell, for powers $q^{n}$ of the elements of suitable sets, we need a way of extracting exponents $n$ without ``bringing them down'', that is, talking of $n\in\Zm$ in some abstract level, without necessarily ending up talking about $n\cdot 1_{S}\in S$. We start the subsection by introducing two important subsets of our polynomial rings.

\begin{definition}\label{defWL}

Let $S=R[x]$, with $R$ reduced indecomposable. Consider the set $V$ as in \Cref{defsTandUPV}, and define the sets
\[W=\{p\in R[x]\colon p-u\in V\textnormal{ for all }u\in \{ 0\}\cup R^{*}\}\]
and
\[L=\{vx+r\colon v\in R^{*},r\in R\}\,.\]

\end{definition}

\noindent In the case where $R$ is a field, the reader may notice that $L$ coincides with the set of degree $1$ polynomials.

Observe that the set $W$ is definable, because $V$ is definable and $R^{*}=S^{*}$ (the latter by \Cref{redunits}, since $R$ is reduced), and that we may easily rewrite $W$ in the form $W=\{p\in V\colon p-u\in V\textnormal{ for all }u\in R^{*}\}$; in particular, elements $p\in W\subseteq V$ inherit (see \Cref{undecidable}) the properties

\begin{itemize}

\item $\lpow(p)=\pow(p)$;

\item All the powers of $p$ are distinct; and

\item For any positive integers $m$ and $n$, we have $p^{m}-1\mid p^{n}-1$ if and only if $m\mid n$.

\end{itemize}
\vspace{3mm}
\noindent The next result shows the relationship between the sets $W$ and $L$.

\begin{proposition}\label{WandL}

With notation as in \Cref{defWL}, we have the following:

\begin{enumproposition}

\item\label{WandL-a} $L\subseteq W$.

\item\label{WandL-b} If $p\in W\smallsetminus L$, then the constant term of $p$ does not belong to $\{0\}\cup R^{*}$.

\item\label{WandL-c} If $R$ is a field, then $W=L$. In particular, $L$ is definable in this case.

\end{enumproposition}

\end{proposition}

\begin{proof}\leavevmode

\begin{enumerate}

\item Just recall that $L\subseteq V$ by \Cref{automorphU} and observe that $L$ is closed under translations by a constant. Alternatively, as $W$ is definable, the reader may observe that elements of $L$ are automorphic images of $x\in W$, but in order to prove that $x\in W$ the argument in \Cref{automorphU} must be applied anyway.

\item Let $p\in W$ and write $p=p_{0}+xg$, with $p_{0}\in R$.
We want to show that, if $r=p_{0}\in\{0\}\cup R^{*}$, then $p\in L$. Since $p\in W$, supposing $r\in \{0\}\cup R^{*}$, we have $xg=p-r\in V$, and since elements of $V$ are irreducible, $g$ must be a unit, say $g=v$, proving $p=vx+r\in L$.

\item The claim follows from item \subcref{WandL-b} and the fact that, if $R$ is a field, then $R=\{0\}\cup R^{*}$.\qedhere

\end{enumerate}

\end{proof}

\noindent Next, we define a class of first-order expressible equivalence relations which will be useful for our purposes:
\begin{definition}

Let $W$ be defined as above and $p,q\in W$. Recall that for elements of $U$ (and therefore for elements of $W$) positive powers coincide with logical powers. We say that $p$ and $q$ are \textbf{$\bm{1}$-connected} (and write $p\sim q$) if, taking any $y=p^{m}\in\lpow(p)$ and $z=q^{n}\in\lpow(q)$ such that $p-q\mid y-z$, we must have $m=n$. We say that $p$ and $q$ are \textbf{$\bm{k}$-connected} (and write $p\sim_{k}q$) if there exist $p_{0},\ldots,p_{k}\in W$ with $p_{0}=p$ and $p_{k}=q$, such that $p_{i-1}\sim p_{i}$ for $i=1,\ldots,k$. We say that $p$ and $q$ are \textbf{connected} if they are $k$-connected for some positive integer $k$.

\end{definition}

\begin{remark}\label{conectivityproperties}

It is trivial to see that being $1$-connected is by definition (and by properties of divisibility) a symmetric relation, and it is also reflexive because all elements of $V$ (and therefore all elements of $W$) have distinct positive powers. This makes $k$-connectivity reflexive (take $p_{0}=p_{1}=\cdots=p_{k}=p$) and symmetric (take $p'_{i} =p_{k-i}$ and use symmetry of $\sim$), which in turn implies that connectivity is reflexive and symmetric. Furthermore, transitivity of the connectivity relation may be proven by merging connecting sequences, concluding that being connected is an equivalence relation. Finally, observe also that, if $p\sim_{k_{1}}q$ and $k_{1}\leq k_{2}$, then $p\sim_{k_{2}}q$, by extending $p_{0}=p,\ldots,p_{k_{1}}=q$ identically by $p_{j}=q$ on the right, for $j=k_{1},\ldots,k_{2}$.

\end{remark}

\begin{lemma}\label{Antilider}

Let $R$ be a reduced indecomposable ring and let $p,f\in R[x]$ be such that $p\mid f$. If the lowest degree nonzero coefficient of $f$ is a unit, then the lowest degree nonzero coefficient of $p$ must be a unit as well.

\end{lemma}

\begin{proof}

Notice that this result is an analogous version of \Cref{basic-d}, obtained by replacing ``leading coefficients'' by ``lowest degree nonzero coefficients'', and the corresponding proof can be adapted to meet our purpose. Alternatively, for any polynomial $g=g_{m}x^{m}+g_{m-1}x^{m-1}+\cdots+g_{k}x^{k}\in R[x]$, where $m\geq k$, $g_{k}$ and $g_{m}$ being nonzero, we define the \textbf{reciprocal polynomial} of $g$ as
\[\bighat{g}=g_{m}+g_{m-1}x+\cdots+g_{k}x^{m-k}\,.\]
One can easily check the equality $\bighat{g}=x^{m}g(\frac{1}{x})$ in the ring $R[x,\frac{1}{x}]$ of Laurent polynomials over $R$ (\cite{Eisenbud1995}*{Exercise 2.17}),
and observe that the lowest degree nonzero coefficient of $g$ is precisely the leading coefficient of $\bighat{g}$. Therefore, if $p$ divides $f$, say $f=pq$, then multiplying the equality $f(\frac{1}{x})=p(\frac{1}{x})q(\frac{1}{x})$ by $x^{\deg(f)+\deg(p)+\deg(q)}$ yields the equality
\[x^{\deg(p)+\deg(q)}\cdot\bighat{f}=x^{\deg(f)}\cdot\bighat{p}\bighat{q}\]
in $R[x]$. Now, the lowest degree nonzero coefficient of $f$, which we are supposing to be a unit, coincides with the leading coefficient of $\bighat{f}$, which is also the leading coefficient of $x^{\deg(p)+\deg(q)}\cdot\bighat{f}$, and the latter polynomial is a multiple of $\bighat{p}$. Therefore, by \Cref{basic-d}, the leading coefficient of $\bighat{p}$ must be a unit, and since this is the lowest degree nonzero coefficient of $p$, the result follows.
\end{proof}

\begin{lemma}\label{nonunitconnected}

Let $p,q\in R[x]$, with $R$ being a reduced indecomposable nonfield. If $p\in W$ and the constant term of $p$ does not belong to $\{0\}\cup R^{*}$, then $p\sim x$. In particular, all elements of $W\smallsetminus L$ are $1$-connected with $x$ \textup{(}and therefore, trivially, they are $2$-connected with $x$\textup{)}.

\end{lemma}

\begin{proof}

$p-x\mid p^{m}-x^{n}$, together with $p-x\mid p^{m}-x^{m}$, implies $p-x\mid x^{m}-x^{n}$. The lowest degree nonzero coefficient of $p-x$ coincides with its constant term (as the latter is nonzero) and therefore is a nonunit. Therefore, by \Cref{Antilider} the lowest degree nonzero coefficient of $x^{m}-x^{n}$ must be also a nonunit, and this can only happen when $m=n$ (otherwise the required coefficient would be $\pm1$). The last part of the statement follows from \Cref{WandL-b} and from the last part of \Cref{conectivityproperties}.
\end{proof}

\begin{theorem}\label{connectivityfinal}

If $R$ is not a field, then all elements of $W$ are $2$-connected with the element $x$, and consequently $4$-connected with each other.

\end{theorem}

\begin{proof}

If $p\in W\smallsetminus L$, then we already know that $p\sim x$, by \Cref{nonunitconnected}. The same result also accounts for the case $p=vx+r\in L$, when $r$ is a nonzero nonunit. Therefore we are left with the case
\[p=vx+w,\textnormal{ with }v\in R^{*}\textnormal{ and }w\in\{0\}\cup R^{*}\,.\]
Fix a nonzero nonunit $r$ of $R$ (recall that $R$ is not a field). Following \Cref{sumnonunits}, we have two possibilities: either there exists $u\in R^{*}$ with $u-1\notin\{0\}\cup R^{*}$, or else any element of $R$ is a sum of two nonunits.

In the first case define $g=uvx+r$: we claim that $p\sim g$ and $g\sim x$, so that $p\sim_{2}x$. For the first connection, if $m\neq n$, then the leading coefficient of $p^{m}-g^{n}$ is either $v^{m}$ or $-(uv)^{n}$, hence a unit. Since the leading coefficient of $p-g$ is a (nonzero) nonunit, namely $(1-u)v$, it follows from \Cref{basic-d} that $p-g$ cannot divide $p^{m}-g^{n}$. The second connection follows from \Cref{nonunitconnected}.

In the second case, write $w=a+b$, where $a$ and $b$ are constant nonunits. We can also suppose that $a$ and $b$ are both nonzero: if $w$ is a unit, this is automatic; otherwise, $w=0$ and we may take $a=r,b=-r$. Defining $g=vx+a$, we claim that $p\sim g$ and $g\sim x$, so that $p\sim_{2}x$. For the first connection notice that, if $m\neq n$, then the constant nonunit $p-g=b$ cannot divide $p^{m}-g^{n}$, because otherwise it would divide all its coefficients, in particular its leading coefficient, which is a unit, namely $v^{m}$ or $-v^{n}$: a contradiction. The second connection follows from \Cref{nonunitconnected}.

We have proven that, in any case, $p\sim_{2}x$. The remaining part of the claim follows easily by gluing, for any two $p,q\in W$, connecting sequences for $p\sim_{2}x$ and $x\sim_{2}q$ to get $p\sim_{4}q$.
\end{proof}

\noindent Next, we are going to define an important set, which will be used at a starting point to interpret (actual) positive integers in $R[x]$.

\begin{definition}\label{defDelta}

Let $R$ be a reduced indecomposable ring, and consider $W\subseteq R[x]$, the set introduced in \Cref{defWL}. We define $\Delta$ as the following subset of $R[x]\times R[x]$:

\[\Delta=\{(p,y)\colon p\in W,y\in\lpow(p)\}\,.\]

\end{definition}

\noindent Notice that $\Delta$ is a definable subset of $R[x]\times R[x]$ (because $W$ is definable, and the sets $\lpow(p)$ are definable using $p$ as a parameter), and it coincides with the set $\{(p,p^{n})\colon p\in W,n\in\Zm\}$, because $W\subseteq V\subseteq U$, and elements $p\in U$ satisfy $\lpow(p)=\pow(p)$ (\Cref{TandUPV-b}).

When $R$ is not a field, we have proven that any two elements of $W$ are $4$-connected, and this is sufficient to construct a two-dimensional interpretation of $(\Zm,+,\mid\,)$ in $R[x]$ with domain $\Delta\subseteq R[x]\times R[x]$:

\begin{theorem}\label{interprnonfield}

Let $R$ be a reduced indecomposable ring which is not a field. There is a two-dimensional interpretation $\Gamma$ of $(\Zm,+,\mid\,)$ in the ring $R[x]$ consisting in the following:
\vspace{3mm}
\begin{itemize}[labelindent=13pt,itemindent=0em,itemsep=2mm,leftmargin=!]

\item The domain $\Delta\subseteq R[x]\times R[x]$ as described in \Cref{defDelta};

\item The surjective map $f\colon\Delta\to\Zm$ given by $(p,p^{n})\mapsto n$;

\item For interpreting the equality on $\Zm$, the four-variable formula
{\small
\begin{alignat*}{3}
(p,y)=_{\Gamma}(q,z)\ \colon&&\ \ \forall p_{0}\,\forall p_{1}\,\forall p_{2}\,\forall p_{3}\,\forall p_{4}\,\Biggl(\ &\biggl[\,p_{0}=p\ \wedge\ p_{4}=q\ \ &\wedge&\ \ \bigwedge_{i=1}^{3}p_{i}\in W\,\biggr]\xrightarrow{\hspace{5mm}}\\
&&\,\exists y_{0}\,\exists y_{1}\,\exists y_{2}\,\exists y_{3}\,\exists y_{4}\,&\biggl[\,y_{0}=y\ \wedge\ y_{4}=z\ \ &\wedge&\ \ \Bigl(\,\bigwedge_{i=1}^{4}p_{i}-p_{i-1}\mid y_{i}-y_{i-1}\,\Bigr)\\
&&&&\wedge&\ \ \Bigl(\,\bigwedge_{i=1}^{3}y_{i}\in\lpow(p_{i})\,\Bigr)\,\biggr]\ \Biggr)\,;
\end{alignat*}
}

\item For interpreting the sum on $\Zm$, the six-variable formula
{\small
\begin{align*}
\bigr[(p,y)+(q,z)=(r,w)\bigl]_{\Gamma}\ \colon\ \ \exists z'\,\exists w'\,[\,&z'\in\lpow(p)\ \wedge\ w'\in\lpow(p)\\
&\hspace{-3pt}\wedge\ (p,z')=_{\Gamma}(q,z)\ \wedge\ (p,w')=_{\Gamma}(r,w)\ \wedge\ yz'=w'\,]\,;
\end{align*}
}

\item For interpreting the divisibility on $\Zm$, the four-variable formula
{\small
\[(p,y)\bigm|_{\Gamma}(q,z)\ \colon\ \ \exists z'\,[\,z'\in\lpow(p)\ \wedge\ (p,z')=_{\Gamma}(q,z)\ \wedge\ y-1\mid z'-1\,]\,.\]
}

\end{itemize}

\end{theorem}

\begin{proof}

Clearly, as powers coincide with logical powers for elements of $W$, and are all distinct (see the proof of \Cref{undecidability}), the map $f$ is well defined and surjective. We claim that, for all $(p,y),(q,z)\in\Delta$, the formula $(p,y)=_{\Gamma}(q,z)$ holds true if and only if $f(p,y)=f(q,z)$. This amounts to saying that, for $p,q\in W$ and $m,n$ positive integers, $(p,p^{m})=_{\Gamma}(q,q^{n})$ if and only if $m=n$.

Let $p,q\in W$. If $m=n$ and $p_{0},\ldots,p_{4}\in W$ are such that $p=p_{0}$ and $q=p_{4}$, then we may take $y_{i}=p_{i}^{n}$ and it can be easily checked that $(p,p^{n})=_{\Gamma}(q,q^{n})$ holds true through such choices. Conversely, suppose $(p,p^{m})=_{\Gamma}(q,q^{n})$ is true for $p,q\in W$. Since we know by \Cref{connectivityfinal} that $p\sim_{4}q$, we may take a connecting sequence $p_{0},\ldots,p_{4}$, with $p_{0}=p,p_{4}=q$, that is, with the property that for any $i=1,\ldots,4$, if $p_{i}-p_{i-1}$ divides $p_{i}^{n_{i}}-p_{i-1}^{n_{i-1}}$, then $n_{i}=n_{i-1}$. Since $(p,p^{m})=_{\Gamma}(q,q^{n})$ is true, we have that for this connecting sequence there exist $y_{i}=p_{i}^{n_{i}}$, for $i=0,\ldots,4$, with $y_{0}=p^{m}$ (and therefore $n_{0}=m$), $y_{4}=q^{n}$ (and therefore $n_{4}=n$) and $p_{i}-p_{i-1}\mid p_{i}^{n_{i}}-p_{i-1}^{n_{i-1}}$ for $i=1,\ldots,4$. Connectedness implies $n_{i}=n_{i-1}$ for $i=1,\ldots,4$, proving $n=n_{0}=\cdots=n_{4}=m$, as required.

Once we proved that the relation $=_{\Gamma}$ works for our purpose, it is easy to check that the same occurs for the sum: for $p,q,r\in W$ and positive integers $m,n$ and $k$, we have that $m+n=k$ if and only if $\bigl[(p,p^{m})+(q,q^{n})=(r,r^{k})\bigl]_{\Gamma}$ holds. To prove this, suppose first $m+n=k$ and write $y=p^{m},z=q^{n},w=r^{k}$. Taking $z'=p^{n},w'=p^{k}$, it is straightforward to see that $(p,z')=_{\Gamma}(q,q^{n}),\ (p,w')=_{\Gamma}(r,r^{k})$ and $yz'=w'$. For the converse, observe that if $\bigl[(p,p^{m})+(q,q^{n})=(r,r^{k})\bigl]_{\Gamma}$ is true, then there must exist $z'=p^{n_{0}},w'=p^{k_{0}}$ such that $(p,z')=_{\Gamma}(q,q^{n}),\ (p,w')=_{\Gamma}(r,r^{k})$ and $p^{m}z'=w$. Since the relation $=_{\Gamma}$ was proven to be equivalent to equality of exponents, we get $n_{0}=n$ and $k_{0}=k$, and therefore the condition $p^{m}z'=w'$ becomes $p^{m+n}=p^{k}$, which proves $m+n=k$, since the powers of $p$ are distinct.

Finally, in order to prove that the relation $\bigm|_{\Gamma}$ plays the role meant for it, we shall take $m,n\in\Zm$ and $p,q\in W$, and prove that $m\mid n$ if and only if $(p,p^{m})\bigm|_{\Gamma}(q,q^{n})$. To this end, we use the fact that for any $h\in W\subseteq V$, we have $m\mid n$ if and only if $h^{m}-1\mid h^{n}-1$ (see \Cref{undecidable}). If $m$ divides $n$, we may take $z'=p^{n}$: by what was proven, we have $(p,z')=_{\Gamma}(q,q^{n})$ and clearly $p^{m}-1$ divides $z'-1=p^{n}-1$, making $(p,p^{m})\bigm|_{\Gamma}(q,q^{n})$ true. Conversely, if $(p,p^{m})\bigm|_{\Gamma}(q,q^{n})$ is true, then there exists $z'=p^{k}\in\lpow(p)$ such that $(p,z')=_{\Gamma}(q,q^{n})$ and $p^{m}-1\mid z'-1$. The first condition, as we proved, leads to $k=n$, whereas the second condition implies $m\mid k$.
\end{proof}

\noindent We have therefore proved interpretability of the structure $(\Zm,+,\mid\,)$ in reduced indecomposable polynomial rings over a nonfield, by defining and exploiting the connectivity relation on $W$. We now turn our attention to the case where $R$ is a field.\\[3mm]

\begin{remark}\label{aspas}

In the definability section, the case of fields was accounted for by the formula $\Theta$ defined in \Cref{Z+invertible}. Let $R$ be a field and $S=R[x]$. Given $\ell\in L=W\subseteq P$, fix $y=\ell^{n}\in\lpow(\ell)$, with $n\in\Zm$. By reasoning as in the proof of \Cref{ZisdefinA}, we conclude that $n\cdot 1_{S}$ is the unique element $t$ in $S$ satisfying the conditions $t\in\{0\}\cup S^{*}$ and 
\begin{equation}\label{ambiguo}
\textnormal{there exists}\ \ w\in S\ \ \textnormal{such that}\ \ \ell-1\mid w-t\ \ \textnormal{and}\ \ y-1=(\ell-1)w\,.\tag{$\spadesuit$}
\end{equation}
Regularity of $\ell-1$ implies that $w$ is uniquely determined by the equality $y-1=(\ell-1)w$, namely $w=w_{n}(\ell)$; see \Cref{wnp}. Therefore we may rewrite \labelcref{ambiguo} unambiguously in the form ``$\ell-1\bigm|\frac{y-1}{\ell-1}-t$''.

\end{remark}

\noindent Recall that $L=W$ is definable whenever $R$ is a field (\Cref{WandL-c}), and that $\ZZ^{+}=\Zm$ in characteristic zero. These facts, together with \Cref{aspas}, allow to construct an interpretation in the case of fields of characteristic zero, and allows to prove definability, with parameter varying in the definable set $L$, of certain special sets that will help setting interpretability in the positive characteristic case too. First, let us observe how, in the case where $R$ is a field of characteristic zero, one can easily construct a two-dimensional interpretation of the structure $(\Zm,+,\mid\,)$ whose domain and associated surjection are the same used for the case of nonfields.

\begin{theorem}\label{interprcharzero}

Let $R$ be a field of characteristic zero and let $\Delta\subseteq R[x]\times R[x]$ be as in \Cref{defDelta}. There is a two-dimensional interpretation of $(\Zm,+,\mid\,)$ in $R[x]$ with domain $\Delta$, and with associated surjective map sending $(\ell,\ell^{n})$ to $n\in\Zm$.

\end{theorem}

\begin{proof}

We set the interpretation of equality to be
\[(\ell_{1},y_{1})=_{\Gamma}(\ell_{2},y_{2})\ \colon\ \ \exists t\,\biggl[\,t\in\{0\}\cup S^{*}\footnote{Although the condition ``$t\in\{0\}\cup S^*$'' seems odd to interpret $\Zm$ in the characteristic zero case (it could be changed to ``$t\in S^*$''), such uniform writing also works in the positive characteristic case, where the set $\{0\}\cup S^{*}$ (and not $S^{*}$) contains the images of all integers in the prime subring.}\!\ \wedge\ \biggl(\,\ell_{1}-1\biggm|\frac{y_{1}-1}{\ell_{1}-1}-t\,\biggr)\ \wedge\ \biggl(\,\ell_{2}-1\biggm|\frac{y_{2}-1}{\ell_{2}-1}-t\,\biggr)\,\biggr]\,.\]
By \Cref{aspas}, this relation is equivalent to saying that $y_{1}$ and $y_{2}$ have the same exponents as powers of $\ell_{1}$ and $\ell_{2}$, respectively.
Interpretations of sum and divisibility are defined by using the same formulas as in the case of nonfields, with the definition just given replacing the former definition of $=_{\Gamma}$\footnote{Since this difference does not affect the proof of interpretability, we may safely consider the interpretations for sum and divisibility as ``essentially equal'' to those of \Cref{interprnonfield}.}\!: the proof of their equivalence to sum and divisibility on $\Zm$ is identical to that provided in \Cref{interprnonfield}.
\end{proof}

\noindent We now move to the case where $R$ is a field of positive characteristic $p>0$, where a two-dimensional interpretation will be given on a subset of $\Delta$ and for which the definition of equality requires more effort to be established.

\begin{leftbar}
\noindent\textbf{Notational warning.} In the rest of this subsection, since $p$ is the name chosen for the characteristic, and $L=W$ is our definable subset of $R[x]$ of reference, we will refer to the general element of $L=W$ as $\ell$ instead of $p$. Notice that this convention is already used in the previous case (fields of characteristic zero).
\end{leftbar}

\noindent We begin by introducing, for any fixed polynomial $\ell$, two important sets of polynomials, which will turn out to be definable (with parameter) whenever $\ell$ is a degree $1$ polynomial, that is, an element of $L$.

\begin{definition}\label{defmandppow}

Let $R$ be a field of characteristic $p>0$. For $\ell\in R[x]$, we define the following sets:
\vspace{2mm}
\begin{itemize}[labelindent=18pt,itemindent=0em,leftmargin=!,itemsep=2mm]

\item $\mpow(\ell)$ is the set of powers of $\ell$ whose exponent is a multiple of $p$;

\item $\ppow(\ell)$ is the set of powers of $\ell$ whose exponent is a positive power of $p$.

\end{itemize}

\end{definition}

\pagebreak

\noindent Notice that $\ppow(\ell)\subseteq\mpow(\ell)\subseteq\pow(\ell)$ holds trivially for any $\ell\in R[x]$, and recall that $\lpow(\ell)=\pow(\ell)$ whenever $\ell\in L$.

\begin{lemma}\label{MandPPOW}

Let $R$ be a field of characteristic $p>0$. Given $y\in R[x]$ and $\ell\in L$, we have:
\vspace{2mm}
\begin{enumlemma}[labelindent=23pt,itemindent=0em,leftmargin=!,itemsep=3mm]

\item\label{MandPPOW-a} $y\in\mpow(\ell)$ if and only if $y\in\lpow(\ell)$ and $(\ell-1)^{2}\mid y-1$.

\item\label{MandPPOW-b} $y\in\ppow(\ell)$ if and only if
\vspace{1mm}
\begin{itemize}[labelindent=0pt,itemindent=0em,itemsep=1mm,leftmargin=!]

\item $y\in\mpow(\ell)$, and

\item For any $y'\in\lpow(\ell)$ with $y'\neq y$ and $y'-1\mid y-1$, we have $y'\in\mpow(\ell)$.

\end{itemize}

\end{enumlemma}
\vspace{2mm}
Consequently, given $\ell\in L$, we have that both $\mpow(\ell)$ and $\ppow(\ell)$ are definable using $\ell$ as parameter.

\end{lemma}

\begin{proof}\leavevmode

\begin{enumerate}

\item As $\ell\in L$, we have observed that $\mpow(\ell)\subseteq\lpow(\ell)$. Therefore it is enough to show that, for $y\in\lpow(\ell)$, we have that $y\in\mpow(\ell)$ if and only if $(\ell-1)^{2}$ divides $y-1$. Take any $n\in\Zm$. By recalling, as in \Cref{wnpcongn}, that $\frac{\ell^{n}-1}{\ell-1}=w_{n}(\ell)=(\ell-1)[\ell^{n-2}+2\ell^{n-3}+\cdots+(n-2)\ell+(n-1)]\footnote{For $n=1$, the expression in brackets is understood to be an empty sum.}\!+n\cdot 1_{S}$, or the argument in \Cref{aspas}, we get that
$\ell-1$ divides $\frac{\ell^{n}-1}{\ell-1}-n\cdot 1_{S}$, where regularity of $\ell-1$ allows the slight abuse of notation. This fact, together with the fact that linear polynomials divide no nonzero constant, allows us to argue that
\[(\ell-1)^{2}\mid\ell^{n}-1\iff\ell-1\biggm| \dfrac{\ell^{n}-1}{\ell-1}\iff\ell-1\mid n\cdot 1_{S}\iff n\cdot 1_{S}=0\ ,\]
and to conclude, for any $y=\ell^{n}\in\lpow(\ell)$, that $(\ell-1)^{2}\mid y-1$ if and only if $p\mid n$.

\item Observe that, for all $m,n\in\Zm$, we have $\ell^{m}-1\mid\ell^{n}-1$ if and only if $m\mid n$ (because $\ell\in V$), and that powers of $p$ can be characterized as those multiples of $p$ whose only positive divisors, except for $1$, are multiple of $p$. These facts, together with the result of item \subcref{MandPPOW-a}, yield the claim.\qedhere

\end{enumerate}

\end{proof}

\noindent Before giving the interpretation of the structure $(\Zm,+,\mid\,)$ in the last case, a technical lemma is needed, expressing a relationship between two powers of linear polynomials, sharing a suitable exponent, in the prime characteristic case.

\begin{lemma}\label{ele1k=uele2k}

Let $R$ be a field of characteristic $p>0$, and suppose that $k=p^{m}$, with $m\geq 1$. Given $\ell_{1},\ell_{2}\in L\subseteq R[x]$, there exist $u\in R^{*}$ and $\rho\in R$ such that $\ell_{1}^{k}=u\ell_{2}^{k}+\rho$.

\end{lemma}

\begin{proof}

We may write $\ell_{i}=a_{i}x+b_{i}$, with $a_{1},a_{2}\neq 0$. By setting $s=b_{1}-\frac{a_{1}b_{2}}{a_{2}}$ and $v=\frac{a_{1}}{a_{2}}$, we get $\ell_{1}=v\ell_{2}+s$. Notice that raising elements of $R[x]$ to $k$ is the $m$-th iterate of the Frobenius endomorphism, hence an additive map. Therefore we may write $\ell_{1}^{k}=u\ell_{2}^{k}+\rho$, where $u=v^{k}\in R^{*}$ and $\rho=s^{k}$.
\end{proof}

\noindent The following result is a hint to the interpretation of equality that we intend to build for the positive characteristic field case.

\pagebreak

\begin{proposition}\label{interprcharpequality}

Let $R$ be a field of characteristic $p>0$ and let $\ell_{1},\ell_{2}\in L$. Suppose that $k_{1},k_{2}$ are powers of $p$ with positive exponents, and set $y_{i}=\ell_{i}^{k_{i}}$ \textup{(}that is, $y_{i}\in\ppow(\ell_{i})$ for $i=1,2$\textup{)}. We have $k_{1}=k_{2}$ if and only if there exist $u\in R^{*}$ and $\rho\in R$ such that $y_{1}=uy_{2}+\rho$.

\end{proposition}

\begin{proof}

The ``only if'' part follows from \Cref{ele1k=uele2k}. For the converse, just observe that, if two polynomials differ by a constant, then they must have the same degree, and that multiplying by a unit does not alter the degree.
\end{proof}

\begin{theorem}\label{interpretcharp}

Let $R$ be a field of characteristic $p>0$ and consider the polynomial ring $S=R[x]$. There is a two-dimensional interpretation of $\Zm$ in $S$ with domain
\[\nabla=\bigl\{(\ell,y)\colon\ell\in L,y\in\ppow(\ell)\bigr\}\,,\]
and with associated surjective map sending $(\ell,\ell^{p^{n}})$ to $n\in\Zm$.

\end{theorem}

\begin{proof}

The interpretation of equality is given by
\[(\ell_{1},y_{1})=_{\Gamma}(\ell_{2},y_{2})\ \colon\ \ \exists u\,\exists\rho\,(\,u\in S^{*}\ \wedge\ \rho\in\{0\}\cup S^{*}\ \wedge\ y_{1}=uy_{2}+\rho\,)\,.\]
The subset $\nabla\subseteq S\times S$ is definable without parameters, because the sets $\ppow(\ell)$ are definable using $\ell$ as a parameter (by \Cref{MandPPOW-b}) and $L=W$ is definable. Moreover, the map $(\ell,\ell^{p^{n}})\mapsto n$ is well defined because the powers of elements in $L$ are distinct, and surjective by the definition of the sets $\ppow(\ell)$.

Equivalence between $=_{\Gamma}$ and the equality of positive integers is just the content of \Cref{interprcharpequality} and, once this is granted, the equivalence of sum and divisibility on $\Zm$ with the maps defined follows exactly as in the proof of \Cref{interprnonfield}.
\end{proof}

\noindent By gathering \Cref{interprnonfield,interprcharzero,interpretcharp} together with the fact that the product on $\Zm$ may be defined in terms of sum and divisibility, we may conclude that for any reduced indecomposable ring $R$ there exists a two-dimensional interpretation of the structure $(\Zm,+,\,\cdot\,)$ in $R[x]$. From this interpretation, it is straightforward to build an interpretation of the ring $\Z$ in such cases.

\section{Appendix: miscellaneous considerations}\label{sectionAppendix}

\noindent In this section we place additional findings concerning our work, which are not strictly necessary to prove the main result but have appeared as side-results, by-products and optimal generalizations and may be useful in future attempts at extending our claims to wider classes of rings or in more general contexts.

\subsection{A generalization of the uniform exponent-extracting technique and an application in the noncommutative context}\label{Feixistico}

\noindent In this subsection we will give another version of \Cref{singlepOydes}, whose claim is presented in terms of maps between sets of first-order formulas, mapping simple formulas into complex ones, the latter being built out of the former. The theorem is stated for unital rings, possibly noncommutative. In the commutative case, the technique works as an alternative argument for defining the prime subring in rings of our subclass $\CH_{1}$ (see \Cref{definethemall}), for it relies crucially on the existence of a definable set containing the positive integers and, for at least one element $r\in R$, no two polynomials taking the same value at $r$. If $S$ is in $\CH_{1}$, then the set $\{0\}\cup S^{*}$ has this property because it is a set of constant elements. We also provide, as an application of this more general criterion, an example of a noncommutative ring in which the prime subring is definable by using an extra constant symbol.

\begin{theorem}\label{Oydesfeixe}

Let $F_{1}$ \textup{(}resp.~$F_{2}$\textup{)} denote the set of one-variable \textup{(}resp.~two-variable\textup{)} first-order formulas in the language of unital rings. Consider the function $L\colon F_{1}\times F_{2}\to F_{2}$ given by
\[[L(\alpha,\beta)](t,s)\colon\ \ \alpha(t)\ \wedge\ \exists y\,\exists w\,[\,\beta(y,s)\ \wedge\ y-1=(s-1)w\ \wedge\ s-1\mid w-t\,]\,,\]
where ``$\mid$'' denotes left divisibility. Let $S$ be a unital \textup{(}not necessarily commutative\textup{)} ring and let $Q$ be a subset of $S$ such that for all elements $p\in Q$:

\begin{enumerate}[label=\texttt{\textup{(\arabic*)}}]

\item $p-1$ is left cancelable.

\item There is a definable set $A_{p}$ such that for all $n\in\ZZ^{+}$ there is a unique element $s_{p}(n)\in A_{p}$ right congruent\footnotemark\! to $n$ modulo $p-1$.

\item There is a two variable formula $\CB(p)\in F_{2}$ such that $[\CB(p)](\cdot,p)$ defines a subset $B_{p}$ of $\pow(p)$.

\end{enumerate}

\noindent The following hold:

\begin{enumtheorem}

\item\label{Oydesfeixe-a} For all $p\in Q$, the set $s_{p}(\log_{p}B_{p})$ is definable, by using an extra symbol for $p$.

\item\label{Oydesfeixe-b} If $Q$ is definable, then the sets $\cap_{p\in Q}s_{p}(\log_{p}B_{p})$ and $\cup_{p\in Q}s_{p}(\log_{p}B_{p})$ are definable in the language of rings.

\item\label{Oydesfeixe-c} For any $p\in Q$ that further satisfies $\ZZ^{+}\subseteq A_{p}$, we have that $\log_{p}B_{p}$ is definable in the language $(0,1,+,\cdot,p)$ with an extra symbol for $p$. Moreover, if $B_{p}=\pow(p)$, then $\ZZ^{+}$ is definable in $S$ in the language $(0,1,+,\cdot,p)$.

\item\label{Oydesfeixe-d} If $Q$ is definable, all elements of $Q$ satisfy $\ZZ^{+}\subseteq A_{p}$, and at least one of them satisfies $B_{p}=\pow(p)$, then $\ZZ^{+}$ is definable in $S$ in the language of rings. Moreover, if all $p\in Q$ satisfy $B_{p}=\pow(p)$, then we may define $\ZZ^{+}$ either as a union or as an intersection of equal copies of $\ZZ^{+}$, quantified over $Q$.

\end{enumtheorem}

\end{theorem}

\footnotetext{By right congruence we mean congruence modulo the right ideal generated by $p-1$.}

\begin{proof}\leavevmode

\begin{enumerate}

\item By properties \texttt{\textup{(2)}} and \texttt{\textup{(3)}}, if $\CA(p)$ denotes the formula defining $A_{p}$, then the maps $\CA\colon Q\to F_{1}$ and $\CB\colon Q\to F_{2}$ may be merged into their direct product function $(\CA,\CB)\colon Q\to F_{1}\times F_{2}$, sending $p$ to $(\CA(p),\CB(p))$, and the composition $[L\circ(\CA,\CB)(p)]$ may partly evaluate at $p$ in the second variable. It is easy to observe that $[L\circ(\CA,\CB)(p)](\cdot,p)=[L(\CA(p),\CB(p))](\cdot,p)$ defines the set of elements $t$ of $A_{p}$ such that $\exists y\,\exists w\,L_{\CB(p)}(t,p,y,w)$ holds, where $L_{\CB(p)}$ is the formula $L_{\beta}$ defined in \Cref{Lbeta}, and where ``$\mid$'' stands for left divisibility.

As $B_{p}$ is a subset of $\pow(p)$, the exponent-extracting function $\log_{p}$ may be defined on this set and we are in the condition to apply \Cref{singlepOydes} (the reader may check that the proof of this result works identically for noncommutative rings, for it only requires condition \texttt{\textup{(1)}} on cancelability of $p-1$ from the left). By \Cref{singlepOydes-b} the above formula defines the set $A_{p}\cap[\log_{p}B_{p}+(p-1)S]$. An element $t\in S$ belongs to this set if and only if it lies in $A_{p}$ and it is right congruent to some element of $\log_{p}B_{p}$ modulo $p-1$, that is, there exists a positive integer $n$ in $\log_{p}B_{p}$ (elements of $\log_{p}B_{p}$ are always positive integers) right congruent to $t$ modulo $p-1$. However, by our hypothesis on $s_{p}$, the only such element is $s_{p}(n)$. Therefore this set is given by the images of elements of $\log_{p}B_{p}$ under $s_{p}\colon\ZZ^{+}\to A_{p}$, as required.

\item If $Q$ is definable, then quantifying universally or existentially on $p\in Q$ gives definitions of the required sets in the language of rings, for it eliminates the need of a symbol for $p$ in the defining language.

\pagebreak

\item As any integer is trivially congruent with itself modulo $p-1$, if an integer belongs to $A_{p}$, then it must coincide with its image under $s_{p}$ by condition \texttt{\textup{(2)}}. Therefore $s_{p}$ coincides with the inclusion of $\ZZ^{+}$ in $A_{p}$ and the first part of the claim follows by item \subcref{Oydesfeixe-b}. Clearly, if $B_{p}=\pow(p)$, then $\log_{p}B_{p}=\ZZ^{+}$ and the second part of the claim follows as well.

\item By merging items \subcref{Oydesfeixe-b} and \subcref{Oydesfeixe-c} we get a formula defining the union, over $Q$, of the sets $\log_{p}B_{p}$. But these are all subsets of $\ZZ^{+}$ and at least one of them coincides with it. Therefore such union must be $\ZZ^{+}$. The last claim is straightforward, as one is able to use either the universal or the existential quantification described in the proof of item \subcref{Oydesfeixe-b}.\qedhere

\end{enumerate}

\end{proof}

\begin{remark}

The reader may notice how items \subcref{Oydesfeixe-a} and \subcref{Oydesfeixe-c} of the previous theorem, as well as its hypotheses, have a totally elementwise formulation, making sense for one element $p$ of $S$ at a time, with no need for defining a set $Q$. However, we preferred to include the set $Q$ (and items \subcref{Oydesfeixe-b} and \subcref{Oydesfeixe-d}) within the same result and let the reader think of the elementwise formulation as the case $Q=\{p\}$ (notice that in \subcref{Oydesfeixe-a} and \subcref{Oydesfeixe-c} the set $Q$ needs not be, \emph{a priori}, definable).

\end{remark}

\noindent The result above can be interpreted as a statement on a diagram of trivial set bundles on $S$. More specifically, if we consider the following setting:

\begin{itemize}[itemsep=2mm]

\item $L\colon F_{1}\times F_{2}\to F_{2}$ is defined as in the previous theorem;

\item $\CP_{\Def}(S)$ denotes the set of definable subsets of $S$;

\item $\TS\colon F_{1}\to\CP_{\Def}(S)$ is the ``truth set'' function, sending a one-variable formula $\alpha$ to the set $\alpha^{S}=\{s\in S\colon\alpha(s)\textnormal{ is true}\}$;

\item $\Psi\colon F_{2}\times S\to\CP_{\Def}(S)$ is the function given by $\Psi(\varphi,s)=\TS\bigl(\varphi(\cdot,s)\bigr)=\varphi(\cdot,s)^{S}$,

\end{itemize}

\noindent then we may look at the following diagram of trivial set bundles on $S$:
\[\begin{tikzcd}
F_{1}\times F_{2}\times S \arrow[r, "L\,\times\,\operatorname{id}_{S}"] \arrow[d] & F_{2}\times S \arrow[d] \\
\CP_{\Def}(S)\times\CP_{\Def}(S)\times S \arrow[r,dashrightarrow] & \CP_{\Def}(S)\times S
\end{tikzcd}
\]
where the right downward arrow is given by the map $(\Psi,\pi_{S})\colon F_{2}\times S\to\CP_{\Def}(S)\times S$, sending $(\varphi,s)$ into $(\Psi(\varphi,s),s)=(\varphi(\cdot,s)^{S},s)$ (that is, the direct product map of $\Psi$ with the projection on the factor $S$), and the left downward arrow is $\TS\times\,(\Psi,\pi_{S})$.

Under this point of view, the theorem can be interpreted in terms of the corresponding maps of sections of these bundles over a subset $Q\subseteq S$: it amounts to saying that, if we restrict the diagram to $Q\subseteq S$ and to the section $(\CA,\CB)$, that is, if we compose from the top left with $\bigl(\CA,\CB,\subseteq\bigr)\colon Q\to F_{1}\times F_{2}\times S$ sending $p$ to $(\CA,\CB,p)$, then we may take the dashed arrow to be $(X,Y,p)\mapsto s_{p}\bigl(\log_{p}(Y)\bigr)$, making the restricted diagram commute, provided that, for all $p\in P,\CB(p)\subseteq\pow(p)$ and the quotient map $S\to S/(p-1)$ coequalizes the inclusion map $\ZZ^{+}\subseteq S$ and $j_{p}\circ s_{p}$, where $j_{p}$ is the inclusion map $A_{p}\subseteq S$. The last condition is equivalent to saying that the lower triangle in the diagram below commutes for all $p\in Q$\footnote{In the diagram we preferred the notation $\pi_{Q}$ to $\pi_{3}$ to denote the projection onto the factor $Q$ of $F_{1}\times F_{2}\times Q$ (as well as all projections onto $Q$) because $Q$ appears as the second, instead of third, factor of the product $F_{2}\times Q$, in the rightmost top corner of the diagram, and we wanted to use the same notation for the rightmost downward arrow. Nevertheless, we had to opt for $\pi_{2}$, rather than $\pi_{\CP_{\Def}(S)}$ for projection onto the second factor of $\CP_{\Def}(S)\times\CP_{\Def}(S)\times Q$, since the first two factors in this case are equal.}\!.
\newlength{\col}
\setlength{\col}{\widthof{\,$\CP_{\Def}(S)\times\CP_{\Def}\bigl(\pow(p)\bigr)\times Q$\,}}
\[\scalebox{0.65}{%
\begin{tikzcd}[column sep=large, font=\large,every label/.append style = {font = \large}, ampersand replacement=\&]
\parbox[t]{\widthof{\,$Q$\,}}{\centering $Q$ \\[-1ex] \rotatebox{90}{\small $\in$} \\[-1ex]$p$} \arrow[rr, "{\bigl(\CA,\CB,\operatorname{id}_{Q}\bigr)}"'] \arrow[rrdd, bend right, mapsto, start anchor={[xshift=2.5ex]south west}, end anchor={[xshift=7ex,yshift=3ex]south west}] \& \& F_{1}\times F_{2}\times Q \arrow[ll, bend right, dashrightarrow, rotate=90, start anchor={north west}, end anchor={north east}, "\pi_{Q}"'] \arrow[rrr, "L\times\operatorname{id}_{Q}"] \arrow[dd, end anchor={north}, "\TS\times\,(\Psi\bigl|_{F_{2}\times Q}{,}\pi_{Q})"] \& \& \& F_{2}\times Q \arrow[dd,"(\Psi\bigl|_{F_{2}\times Q}{,}\pi_{Q})"] \\\\
\& \& \parbox[t]{\col}{\centering $\CP_{\Def}(S)\times\CP_{\Def}(S)\times Q$ \\[0.8ex] \rotatebox{90}{\huge $\subseteq$} \\ ${\CP_{\Def}(S)\times\CP_{\Def}\bigl(\pow(p)\bigr)\times Q}$ \\[0.8ex] \rotatebox{90}{\huge $\in$} \\ $\bigr(\underbracket[1pt][1pt]{A_{p}},{B_{p}},p)$} \arrow[rrr, dashrightarrow, "\exists", "{\bigl(\,s_{\pi_{Q}(\cdot)}\,\circ\,\log_{\pi_{Q}(\cdot)}\,\circ\,\pi_{2}\,,\pi_{Q}\bigr)}"'] \arrow[rd, start anchor={[xshift=-1.0ex]south}, end anchor={[yshift=0ex]}, "\subseteq\,(j_{p})"] \& \& \& \CP_{\Def}(S)\times Q \\
\& \& \mbox{\huge $\circlearrowleft$} \& S \arrow[rd, end anchor={[xshift=-2.5ex]north}, "\pi"] \& \& \\
\& \ZZ^{+} \arrow[ruu, end anchor={[xshift=8.5ex]}, "s_{p}"] \arrow[rr,"\subseteq"] \& \& S \arrow[r, end anchor={[xshift=7.5ex]}, "\pi"] \& \hspace{8ex}S/(p-1) \&
\end{tikzcd}
}\]
In some sense, if we find a way of measuring some homotopic-like information about how much the square diagram fails to be commutative (or better, to admit a bottom arrow ``closing it'' and making it commute), a set $Q$ whose elements all have the properties listed in \Cref{Oydesfeixe} is one that is ``small enough to trivialize the bundle diagram''. Properties of \Cref{Oydesfeixe} can be expressed in terms of the section map induced by the leftmost downward arrow landing into a subset of the sections over $Q$, say $\mathcal{E}(Q)\subseteq(\CP_{\Def}(S)\times\CP_{\Def}(S)\times S)(Q)$, characterized in turn by the fact that images of its sections, considered as maps $Q\to\CP_{\Def}\times\CP_{\Def}\times S$, belong to a special subset whose elements $(\CA_{p},\CB_{p},p)$ share the properties described in the hypotheses of \Cref{Oydesfeixe}. We can build up this way the subbundle $\mathcal{E}\subseteq \CP_{\Def}(S)\times\CP_{\Def}(S)$ and claim that if, locally (over $Q$), the leftmost arrow in the square lands inside $\mathcal{E}$, then the rightmost arrow also lands inside a subbundle $\mathcal{F}\subseteq\CP_{\Def}\times S$ whose definable sets on the first coordinate are all contained in $\ZZ^{+}$, and the diagram below can be closed by bottom arrows defined by means of the $\log$ functions:
\[\begin{tikzcd}
(F_{1}\times F_{2}\times S)\,(Q) \arrow[r, "L\,\times\,\operatorname{id}_{S}"] \arrow[d] & (F_{2}\times S)\,(Q) \arrow[d] \\
\mathcal{E}\,(Q) \arrow[r,dashrightarrow] & \mathcal{F}\,(Q)
\end{tikzcd}
\]

\begin{corollary}

Let $S=R[x]$ be a reduced irreducible \textup{(}unital, commutative\textup{)} polynomial ring belonging to the subclass $\CH_{1}$ defined in \Cref{definethemall}, that is, suppose that all nonzero integers are invertible in $S$. Denote by $A$ the definable set $\{0\}\cup S^{*}$ and set
\[Q_{A}=\{t\in P\colon t\textnormal{ divides no nonzero difference of two elements of }A\}\,,\]
where $P$ is the set defined in \Cref{TandUPV}. Then $Q_{A}$ is clearly definable and we may use it to define $\ZZ^{+}$, like in \Cref{Oydesfeixe}, as either a union or an intersection of identical copies of itself quantified over $Q_{A}$, that is, by using either one of the formulas
\[\forall p\,\bigl(\,p\in Q_{A}\rightarrow[\,t\in A\ \wedge\ \exists y\,\exists w\,L_{\psi}(t,p,y,w)\,]\,\bigr)\]
or
\[\exists p\,\bigl(\,p\in Q_{A}\ \wedge\ [\,t\in A\ \wedge\ \exists y\,\exists w\,L_{\psi}(t,p,y,w)\,]\,\bigr)\,,\]
where $L_{\psi}$ is defined as in \Cref{Lbeta} and $\psi$ is given by \Cref{lpowformula} and defines logical powers.

\end{corollary}

\begin{proof}

We want to show that we are in the last case of item \subcref{Oydesfeixe-d} in \Cref{Oydesfeixe} and we may apply the theorem, by taking constant functions $\CA(p)=A$ and $\CB(p)=\beta$ and by using $Q_{A}$ in place of the set $Q$ in \Cref{Oydesfeixe-d}. Let $p\in Q_{A}$. Clearly $p-1$ is regular, and therefore cancelable, because $p\in P$. As $\ZZ^{+}\subseteq A_{p}=\{0\}\cup S^{*}$, the additional property on $Q_{A}$ that $p-1$ divides no difference of elements of $A=A_{p}$ implies that $p-1$ cannot divide a difference between a positive integer and an element of $A_{p}$, yielding condition \texttt{\textup{(2)}}. Finally, $B_{p}=\psi(\cdot,p)^{S}=\lpow(p)=\pow(p)$, since $p\in P$ and it is therefore contained in $\ZZ^{+}$, which, together with the fact that $Q_{A}$ is definable, also grants the further specific hypotheses of \Cref{Oydesfeixe-d}.
\end{proof}

\noindent As an application of \Cref{Oydesfeixe-c} we provide an example of a noncommutative ring in which the prime subring is definable in the language of rings expanded with an extra constant symbol.

\begin{example}

Let $D$ be an integral domain, and let $q\in D\smallsetminus\{0\}$. The \textbf{quantum plane over $\bm{D}$ with parameter $\bm{q}$}, denoted by $S=D_{q}[x,y]$, is defined as the quotient of the free noncommutative $D$-algebra over two generators $x$ and $y$, by the unique relation $yx=qxy$. Alternatively, the ring $S$ is the free $D$-module generated by the monomials $x^{m}y^{n}$, with $m,n\in\N$, so their elements are of the form $f=\sum_{(m,n)\in\N^{2}}f_{(m,n)}x^{m}y^{n}$, with $f_{(m,n)}\in D$ and $f_{(m,n)}\neq 0$ for finitely many pairs $(m,n)$. Multiplication is given by $(x^{m}y^{n})(x^{r}y^{s})=q^{rn}x^{m+r}y^{n+s}$, and extended by $D$-linearity. (see \cite{Kassel1995}*{Chapter IV} for details on the case in which $D$ is a field.)

We claim that $\ZZ^{+}$ is definable in $(S,0,1,+,\cdot,x)$, provided that every nonzero integer is invertible in $S$: for example when $D$ is a field or it has positive characteristic (because in the latter case the characteristic is a prime number, as $D$ is an integral domain), or in other cases such as $D=\Q[t]$. Toward our aim, we first endow $\N\times\N$ with the \textbf{degree lexicographic order}: $(m,n)\prec(m',n')$ if either $m+n<m'+n'$, or both $m+n=m'+n'$ and $m<m'$. We observe that $\preccurlyeq$ defines a well-ordering of $\N\times\N$, which satisfies the property
\begin{equation}\label{prec}
(m',n')+(r',s')\prec(m,n)+(r,s),\ \ \textnormal{whenever}\ \ (m',n')\preccurlyeq(m,n)\ \ \textnormal{and}\ \ (r',s')\prec(r,s).\tag{$\dagger$}
\end{equation}
For $f\in S\smallsetminus\{0\}$, let $\max(f)$ (resp.~$\min(f)$) be the maximum (resp.~the minimum) pair $(m,n)$, with respect to $\preccurlyeq$, with $f_{(m,n)}\neq 0$. Given $g,h\in S\smallsetminus\{0\}$, if $(m',n'),(r',s')$ are such that $g_{(m',n')}\neq 0$ and $h_{(r',s')}\neq 0$, then $(m',n')\preccurlyeq\max(g)$ and $(r',s')\preccurlyeq\max(h)$, so by \labelcref{prec} we have $(m',n')+(r',s')\preccurlyeq\max(g)+\max(h)$, and equality occurs precisely when $(m',n')=\max(g)$ and $(r',s')=\max(h)$.

The above reasoning implies that, for $\vec{b}\in\N\times\N$, all summands of
\[(gh)_{\vec{b}}=\sum_{(m',n')+(r',s')=\vec{b}} q^{r'n'}g_{(m',n')}h_{(r',s')}\]
\pagebreak

\noindent vanish if $\vec{b}\succ\max(g)+\max(h)$, and moreover, in the same way, if we set $\max(g)=(m,n),\max(h)=(r,s)$, then
\[(gh)_{\max(g)+\max(h)}=q^{rn}g_{\max(g)}h_{\max(h)}\neq 0\,.\]
This shows that $gh\neq 0$ and $\max(gh)=\max(g)+\max(h)$ (the equality $\min(gh)=\min(g)+\min(h)$ is proven in a similar way); in particular, every nonzero element in $S$ is left cancelable. Consequently, if $g$ left-divides $f$, say $f=gh$, then $g,h\neq 0$ (as $f\neq 0$) and therefore $\max(g)\leq\max(g)+\max(h)=\max(f)$.

Let $x^{\ell}=gh$, with $\ell\geq 0$ and $g,h\in S$, and let $\max(h)=(r_{0},s_{0})$. If $(m,n)$ satisfies $g_{(m,n)}\neq 0$, then we have
\begin{align*}
\min(x^{\ell})=&\,\min(g)+\min(h)\\
\preccurlyeq&\,(m,n)+\max(h)\\
=&\,(m+r_{0},n+s_{0})\\
\preccurlyeq&\,\max(g)+\max(h)\\
=&\,\max(x^{\ell})\,;
\end{align*}
but $\max(x^{\ell})=\min(x^{\ell})=(\ell,0)$, and so $(m+r_{0},n+s_{0})=(\ell,0)$, which forces to have $m=\ell-r_{0}$ and $n=0$. This proves that any left divisor $g$ of $x^{\ell}$ is of the form $g=ax^{m}$, for some $a\in D\smallsetminus\{0\}$ and some $m$ with $0\leq m\leq\ell$. The same result holds for right divisors (with an entirely similar proof), and so $h=bx^{j}$ for some $b\in D\smallsetminus\{0\}$ and some $j$ with $0\leq j\leq\ell$. Therefore $x^{\ell}=(ax^{m})(bx^{j})=abx^{m+j}$, which implies $ab=1$ (and $m+j=\ell$), hence $a\in D^*$. In particular, the case $\ell=0$ implies that every left invertible element in $S$ belongs to $D^{*}$.

We claim that $f\in\pow(x)$ precisely when $\psi(f,x)$ holds, $\psi$ being given by \cref{lpowformula}, and where all the clauses of divisibility are interpreted as left divisibility (under this convention, left divisors of $1$ are precisely the right units, also called \emph{left invertible}). In fact, if $f\in\pow(x)$, say $f=x^{\ell}$, with $\ell\geq 1$, then the reasoning above shows that every left divisor of $f$ is of the form $ax^{m}$, with $a\in D^{*}$ and $0\leq m\leq\ell$, and therefore is either left invertible ($m=0$) or else a left multiple of $x$ ($m>0$); since we also have $x\mid f$ and $x-1\mid f-1$, one of the implications follows.

Conversely, let $f\in S$ be such that $\psi(f,x)$ holds. Since $\max(x-1)=(1,0)\succ(0,0)$, it follows that $x-1$ does not left-divide any element of $D\smallsetminus\{0\}$; in particular we must have $f\neq 0$ (otherwise we would have $x-1\mid f-1=-1$). Let $\max(f)=(p,q)$. If $x^{t}\mid f$, then $(t,0)=\max(x^{t})\preccurlyeq\max(f)=(p,q)$, hence $t\leq p+q$. Thus, there exists a greatest $k\geq 1$ with $x^{k}\mid f$, so by proceeding analogously to the proof of \Cref{lpowgeneral-a} we conclude that $f=ux^{k}$, with $u\in S$ being a left invertible element satisfying $x-1\mid u-1$, hence $u\in D^{*}$, and as we already showed that $x-1$ does not left-divide any nonzero constant, it follows that $u=1$, yielding $f=x^{k}$.

With notation as in \Cref{Oydesfeixe}, let $B_{x}=\pow(x)$. The proof above shows that $B_{x}$ is a definable subset of $\pow(x)$, using $x$ as a parameter; moreover, the element $x-1$ is left cancelable (as every nonzero element of $S$). As mentioned in the previous paragraph, we have that $A_{x}\subseteq D$ and that $x-1$ does not left-divide any element of $D\smallsetminus\{0\}$, so in particular $x-1$ left-divides no nonzero difference between a positive integer and an element of $A_{x}$, so that all conditions of \Cref{Oydesfeixe} are satisfied. As for the further conditions of item \subcref{Oydesfeixe-c}, since we are assuming that all nonzero integers in $S$ are invertible, the definable set $A_{x}=\{0\}\cup S^{*}$ satisfies $\ZZ^{+}\subseteq A_{x}$, and $B_{x}=\pow(x)$ by definition. Applying \Cref{Oydesfeixe-c} with $p=x$, we conclude that the set $\log_{x}B_{x}=\ZZ^{+}$ is definable in $(S,0,1,+,\cdot,x)$, that is, using $x$ as a parameter.

\end{example}

\pagebreak

\subsection{Further properties of logical powers}

\begin{proposition}\label{lpowfurther}

Let $S$ be a ring, and let $p\in S$.

\begin{enumproposition}

\item\label{lpowfurther-a} We have $\lpow(0)=\{0\}$ if $S$ is a field and $\lpow(0)=\varnothing$ otherwise.

\item\label{lpowfurther-b} We have $\lpow(p)\neq\varnothing$ if and only if $p\in\lpow(p)$. If $S$ is not a field and $\lpow(p)\neq\varnothing$, then $p\neq 0$.

\item\label{lpowfurther-c} If $p$ is a unit, then $\lpow(p)$ is the set $\{(p-1)g+1\colon g\in S\}$, which contains every integer power of $p$.

\end{enumproposition}

\end{proposition}

\begin{proof}\leavevmode

\begin{enumerate}

\item If $f\in\lpow(0)$, then $0$ divides $f$, so necessarily $f=0$, which shows that $\lpow(0)\subseteq\{0\}$. Moreover, we have $0\in\lpow(0)$ if and only if every divisor of $0$ is a unit or a multiple of $0$. Since (trivially) every element of $S$ divides $0$, it follows that $0\in\lpow(0)$ precisely when every element of $S$ is a unit or $0$, that is when $S$ is a field.

\item One implication is clear; for the converse, let $f\in\lpow(p)$ be fixed. We trivially have $p\mid p$ and $p-1\mid p-1$, and if $g\in S$ divides $p$, then it also divides $f$ (because $p\mid f$), so $g$ is either a unit or a multiple of $p$. Thus $p\in\lpow(p)$, and if $S$ is not a field, then $p\neq 0$ by item \subcref{lpowfurther-a}.

\item If $p$ is a unit, then all the conditions for membership in $\lpow(p)$ are automatically fulfilled by any element $f$, except possibly for $p-1\mid f-1$, and so $f\in\lpow(p)$ precisely when $f-1=(p-1)g$ for some $g\in S$; in particular, since for all natural $n$ we have that $p-1$ divides both $p^{n}-1$ and $-p^{-n}(p^{n}-1)=p^{-n}-1$, it follows that $\lpow(p)$ contains every integer power of $p$.\qedhere

\end{enumerate}

\end{proof}

\noindent Given a unit $p$ in a ring $S$, let us denote its set of integer powers by $\pZ$. If $p$ has infinite multiplicative order, then $\lpow(p)$ strictly contains every set of the form $\{p^{j}\colon j\geq n_{0}\}$, with $n_{0}\in\Z$ (because $\lpow(p)\supseteq\pZ$, by \Cref{lpowfurther-c} above).

The following result shows that, under certain conditions, the stronger strict inclusion $\lpow(p)\supset\pZ$ holds:

\begin{proposition}\label{lpowunitlarge}

If $p$ is a unit in a ring $S$ such that $p-1$ is regular, then $\lpow(p)$ strictly contains the set of all integer powers of $p$.

\end{proposition}

\begin{proof}

By \Cref{lpowfurther-c} we already have $\lpow(p)=\{(p-1)s+1\colon s\in S\}\supseteq\pZ$. We prove below that $\lpow(p)=\pZ$ implies that $S$ is finite. As regular elements in finite rings are invertible, the equality $\lpow(p)=\pZ$ would imply
\[0=(p-1)[-(p-1)^{-1}]+1\in\lpow(p)=\pZ\,,\]
and this contradiction shows the desired result.

For $n\geq 1$, let $w_{n}(p)$ be given by \Cref{wnp}, and let $w_{0}(p)=0$. Assume that $\{(p-1)s+1\colon s\in S\}=\pZ$. We claim that
\begin{equation}\label{S=wn}
S=\{w_{n}(p)\colon n\geq 0\}\cup\{-p^{-n}w_{n}(p)\colon n\geq 1\}\,.\tag{$\vardiamondsuit$}
\end{equation}
In fact, given $s\in S$ we have $(p-1)s+1=p^{\pm n}$ for some $n\geq 0$. Since $p-1$ is regular, the mapping $s\mapsto(p-1)s+1$ is injective. This fact, together with the equalities
\begin{align*}
(p-1)\cdot w_{n}(p)+1=&\,p^{n};\\
(p-1)\cdot [-p^{-n}w_{n}(p)]+1=&\,p^{-n}\,,
\end{align*}
valid for $n\geq 0$, shows that \labelcref{S=wn} holds.

In particular we have $p=w_{n}(p)$ for some $n\geq 0$ or $p=-p^{-n}w_{n}(p)$ for some $n\geq 1$. The equalities $p=w_{0}(p)=0$ and $p=w_{2}(p)=p+1$ are clearly impossible, and $p=w_{1}(p)=1$ would imply $p-1=0$, contradicting the regularity of $p-1$. Therefore we have $w_{n}(p)-p=0$ for some $n\geq 3$, or $p^{n+1}+w_{n}(p)=0$ for some $n\geq 1$.

In both cases, there exists $f\in\Z[x]$ monic such that $pf(p)+1=0$. Writing $p=(p-1)+1$, and expanding and rearranging terms, we find that $p-1$ is a root of some monic polynomial $g\in\Z[x]$, that is, $g(p-1)=0$. We have $g=x^{k}\bighat{g}$ for some $k\geq 0$ and some $\bighat{g}\in\Z[x]$ monic with nonzero constant term, so $\bighat{g}$ is of the form $xh+1-j$, with $h\in\Z[x]$ monic and $j\in\Z$ with $j\neq 1$. Regularity of $p-1$, together with $0=g(p-1)=(p-1)^{k}\bighat{g}(p-1)$, yields
\[0=\bighat{g}(p-1)=\underbrace{[(p-1)\cdot h(p-1)+1]}_{\in\,\lpow(p)=\pZ}-j\,.\]
Consequently we have $p^{m}=j$ for some $m\neq 0$ (because $j\neq 1$). If $j=-1$, then $p^{2m}=1$, and therefore $\pZ$ is finite. As already mentioned, the mapping $s\in S\mapsto(p-1)s+1$ is injective, hence $S$ has the same cardinality as $\lpow(p)=\pZ$, which shows that $S$ is finite in this case.

Finally, assume $j\neq 0,1,-1$, and let $d=|m|\geq 1$. Any nonconstant monic factor $Q$ in $\Z[x]$ of $x^{d}-j$ is the product of factors $x-\mu_{i}$, with $\mu_{i}\in\mathbb{C}$ satisfying $\mu_{i}^{d}=j$. In particular we have $|\mu_{i}|=|j|^{1/d}>1$, so the constant term $Q(0)$ of $Q$ satisfies $|Q(0)|=|\prod_{i}\mu_{i}|>1$. This implies that $Q$ does not divide $xf+1$ in $\Z[x]$ (because divisibility of polynomials in $\Z[x]$ implies divisibility, in $\Z$, of the corresponding constant terms).

The reasoning above shows that $x^{d}-j$ and $xf+1$ have no nonconstant common factors in $\Z[x]$, and a well-known consequence of this is that they are coprime in $\Q[x]$, so we may write $(x^{d}-j)G+(xf+1)H=1$, with $G,H\in\Q[x]$. On the other hand, since the constant and leading coefficients of $xf+1$ are equal to $1$, it follows that the reciprocal polynomial of $xf+1$ (see the proof of \Cref{Antilider}) is also of the form $x\bighat{f}+1$, with $\bighat{f}\in\Z[x]$ monic. We may perform exactly the same reasoning with $x\bighat{f}+1$ instead of $xf+1$ in the previous paragraph, obtaining $(x^{d}-j)\bighat{G}+(x\bighat{f}+1)\bighat{H}=1$, for some $\bighat{G},\bighat{H}\in\Q[x]$.

Recall that $p$ is root of $xf+1$, so that $p^{-1}$ is root of its reciprocal polynomial, that is $p^{-1}\bighat{f}(p^{-1})+1=0$. Multiplying the equalities obtained in the previous paragraph by some $b\in\Zm$ such that $bG,bH,b\bighat{G},b\bighat{H}\in\Z[x]$, and evaluating at $p$ and $p^{-1}$, respectively, we obtain
\begin{align*}
b\cdot 1_{S}=&\,(p^{d}-j)\cdot\bigl(bG\bigr)(p)+[pf(p)+1]\cdot\bigl(bH\bigr)(p)\\
=&\,(p^{d}-j)\cdot\bigl(bG\bigr)(p)\,;\\
b\cdot 1_{S}=&\,(p^{-d}-j)\cdot\bigl(b\bighat{G}\bigr)(p^{-1})+[p^{-1}\bighat{f}(p^{-1})+1]\cdot\bigl(b\bighat{H}\bigr)(p^{-1})\\
=&\,(p^{-d}-j)\cdot\bigl(b\bighat{G}\bigr)(p^{-1})\,.
\end{align*}
Recalling that $p^{m}=j$ and $m=\pm d$, we conclude that $b\cdot 1_{S}=0$, because at least one of the two expressions above vanishes; this shows that $\ZZ$ is finite. Since $p$ is a root of the monic polynomial $xf+1\in\Z[x]$, it follows that $p$ is $\ZZ$-integral, and therefore $\ZZ[p]$ is a finitely generated $\ZZ$-module (\cite{Hungerford1980}*{Theorem VIII.5.3}), hence a finite set. Finally, from $p^{-1}=-f(p)\in\ZZ[p]$ we get $\ZZ[p,p^{-1}]=\ZZ[p]$, and since $S=\ZZ[p,p^{-1}]$ by \labelcref{S=wn}, we conclude that $S$ is a finite ring.
\end{proof}

\noindent We remark that the hypothesis ``$p-1$ is regular'' cannot be dropped in the statement of \Cref{lpowunitlarge}: take $S=\Z[t]\bigl/\Fa$, where $\Fa=\bigl(2(t-1),t^{2}-1\bigr)$, and consider $p=\overline t$. Then $p$ is invertible, for $p^{2}=1$, which incidentally implies that every element $g\in S$ is of the form $g=a+(p+1)b$, with $a,b\in\Z$; using that $(p-1)(p+1)=0$ we get $(p-1)g=(p-1)a$. Writing $a=2k+r$, with $k\in\Z$ and $r=0$ or $1$, and using that $2(p-1)=0$, we conclude that $(p-1)g+1=1$ or $p$. Thus, $\lpow(p)=\{1,p\}=\pow(p)\cup\{1\}$ (recall that $p^{2}=1$). Note that in this case $p-1\neq 0$ (there are no $f,g\in\Z[t]$ such that $t-1=2(t-1)f+(t^{2}-1)g$) but $(p-1)^{2}=(p^{2}-1)-2(p-1)=0$, so $p-1$ is a zerodivisor.

The following result deals with properties of logical powers in polynomial rings in one variable over reduced/indecomposable rings:

\begin{proposition}\label{p-reduinde}

Let $S=R[x]$, with $R$ a ring, and let $p\in R[x]$.

\begin{enumproposition}

\item\label{p-reduinde-a} If $R$ is reduced or indecomposable, and if $\lpow(p)$ contains an element that is multiple of its own square, then $p$ is invertible.

\item\label{p-reduinde-b} If $R$ is reduced or indecomposable, and if $p\in\lpow(p)$, then $p$ is either invertible or irreducible.

\item\label{p-reduinde-c} If $R$ is reduced and $\lpow(p)$ contains a zerodivisor, then $p$ is invertible.

\end{enumproposition}

\end{proposition}

\begin{proof}\leavevmode

\begin{enumerate}

\item Let $f\in\lpow(p)$ be a multiple of its own square, say $f=f^{2}\ell$. We have that the element $e=f\ell$ is idempotent and multiple of $p$. We also have $f=fe$, which implies $f=fe^{n}$ for all $n\geq 1$, that is, $f$ is infinitely divisible by $e$, and consequently $f$ is infinitely divisible by $p$; in particular, if $R$ is reduced, then $p$ is constant by \Cref{redinftyp}. Moreover, defining $h=1+(1-e)x$ we have that $f=fh$ because $(1-e)f=0$.

If $e=0$, then $f=fe=0$, and therefore $p$ is a unit, by \Cref{0inlpow}. If $e=1$, then $f$ is a unit because $e=f\ell$, which implies that $p$ is a unit as well (since $p$ divides $f$). Since $e=0$ or $1$ in a indecomposable ring, this reasoning settles such a case.

If $R$ is reduced and $e\neq 0,1$, then $h$ is not constant, hence a noninvertible divisor of $f$ (by \Cref{redunits}). Since $f\in\lpow(p)$, the element $p$ necessarily divides $h=1+x-ex$, so $p$ divides $1+x$ (because $p$ divides $e$). As we already observed, $p$ is constant in this case, so $p$ divides all the coefficients of $1+x$, and again we conclude that $p$ is invertible.

\item Since $0$ is not a unit, it follows from \Cref{0inlpow} that $0\notin\lpow(0)$, and so $p\neq 0$. If $p$ is invertible, then we are done; otherwise, if $p=gh$, then $p$ cannot divide both $g$ and $h$ (otherwise $p$ would be a multiple of its square, contradicting item \subcref{p-reduinde-a} above); since both $g$ and $h$ are divisors of $p$ and $p\in\lpow(p)$, it follows that one of $g$ or $h$ is a unit, which shows that $p$ is irreducible.

\item Assume that $p$ is noninvertible, and let $f,\ell\in R[x]$ be such that $f\in\lpow(p)$ and $f\ell=0$; our objective is to show that $\ell=0$. We have $f=fh$, with $h=1+p\ell x$. Since $p$ does not divide $h$ (otherwise $p$ would divide $h-p\ell x=1$), $h$ is a divisor of $f$ and $f\in\lpow(p)$, it follows that $h$ must be invertible. As $R$ is reduced, then $h$ is constant by \Cref{redunits}, so $p\ell=0$, which in turn implies $p=(1+\ell x)p$.

Since we are assuming $\lpow(p)\neq\varnothing$, it follows that $p\in\lpow(p)$ by \Cref{lpowfurther-b}. By item \subcref{p-reduinde-a} we cannot have that $p$ is multiple of $p^{2}$, and consequently the element $1+\ell x$, which is a divisor of $p$, cannot be a multiple of it. This, together with the fact that $p\in\lpow(p)$, implies that $1+\ell x$ must be invertible. Therefore $1+\ell x$ is constant (again by \Cref{redunits}), so $\ell=0$, as desired.\qedhere

\end{enumerate}

\end{proof}

\noindent One may wonder, following items \subcref{p-reduinde-a} and \subcref{p-reduinde-b} of \Cref{p-reduinde}, whether reducedness could be replaced by indecomposability in the hypothesis of item \subcref{p-reduinde-c} of the same proposition. As the counterexample below shows, this is not possible.

\pagebreak

Let $R$ be a local ring (see \Cref{ourlocal}) such that the ideal $\Fm$ of nonunits in $R$ is generated by a nonzero element $p$ with $p^{2}=0$ (as a concrete example, take $R=k[z]\bigl/(z^{2})$, $k$ being a field, and $p=\overline{z}$\,). We have that $R$ is indecomposable (\Cref{stronglocal}), and obviously $p$ is not invertible. We claim that $p$ is irreducible in $R[x]$, so we have $p\in\lpow(p)$ by \Cref{lpowgeneral-c}, and therefore the set $\lpow(p)$, with $p$ noninvertible, contains the zerodivisor $p$.

In order to prove our claim, it is suffices to show that $p=gh$ implies that one of $g$ or $h$ is a unit (because clearly $p\notin\{0\}\cup R[x]^{*}$). If $g=g_{0}+sx,h=h_{0}+tx$, with $s,t\in R[x]$, then $p=g_{0}h_{0}$. We have that $p^{2}=0$ does not divide $p$ (because $p\neq 0$), so one of $g_{0}$ or $h_{0}$ is not a multiple of $p$, hence it is invertible, say $g_{0}\in R^{*}$ and $h_{0}=g_{0}^{-1}p$.

Taking images in the integral domain $(R/\Fm)[x]$\footnote{If $a,b\in R$ are such that $ab$ is a nonunit, then $a$ or $b$ is a nonunit; since the set $\Fm$ of nonunits is already an ideal, it follows that $\Fm$ is indeed a prime ideal, so the quotient ring $R/\Fm$ is an integral domain. (Alternatively, in \Cref{nonstronglocal} is proved that $\Fm$ is maximal, hence prime.)}\! we get $0=(\overline{g_{0}}+\overline{s}x)\overline{t}x$. We have $(\overline{g_{0}}+\overline{s}x)x\neq 0$ because $\overline{g_{0}}\neq 0$, hence $\overline{t}=0$, that is $t=p\ell$ for some $\ell\in R[x]$, and consequently $p=p(g_{0}+sx)(g_{0}^{-1}+\ell x)$.

If $s=s_{n}x^{n}+\cdots+s_{0}$, with $n\geq 0$, then by item \subcref{basic-a} of \Cref{basic} we have $s_{n}^{k}p=0$ for some $k\geq 1$. As $p\neq 0$, it follows that $s_{n}$ cannot be invertible, hence it is a multiple of $p$, and in particular $ps_{n}=0$ (recall that $p^{2}=0$). If $\bighat{s}=s-s_{n}x^{n}$, then $p\bighat{s}=ps-ps_{n}x^{n}=ps$, and so we have $p=p(g_{0}+\bighat{s}x)(g_{0}^{-1}+tx)$. Iterating this argument we conclude that $p$ divides every coefficient of $s$, that is $p$ divides $s$, so $s^{2}=0$, and therefore $(g_{0}+sx)(g_{0}-sx)=g_{0}^{2}$ is a unit, which shows that $g$ is a unit.

\subsection{Algebraic equivalences for reducedness/indecomposability}\label{reduindecomp}

\begin{proposition}\label{pentavalente}

For a ring $R$ the following are equivalent:

\begin{enumproposition}

\item $R$ contains an idempotent element other than $0$ and $1$.

\item\label{pentavalente-b} $R$ is isomorphic to the direct product of two nonzero rings.

\item The polynomial $x$ in $R[x]$ is a product of two noninvertible polynomials of degree $1$.

\item\label{pentavalente-d} The polynomial $x$ in $R[x]$ is a product of two noninvertible polynomials of positive degree.

\item The polynomial $x$ in $R[x]$ is a product of two noninvertible polynomials. Equivalently, $x$ is reducible in $R[x]$.

\end{enumproposition}

\end{proposition}

\begin{proof}\leavevmode

\begin{enumerate}[labelindent=23pt,labelwidth=\widthof{\texttt{e}~$\Rightarrow$~\texttt{a}~$\Rightarrow$~\texttt{c}},itemindent=0em,leftmargin=!]

\item[\texttt{(a}~$\Rightarrow$~\texttt{b):}] If $e\in R$ is idempotent, then $f=1-e$ is too; moreover, the ideal $R_{1}=Re$ (resp.~$R_{2}=Rf$) has $e$ (resp.~$f$) as a multiplicative unit. Therefore both $R_{1}$ and $R_{2}$ are unital rings, with $1_{R_{1}}=e$ and $1_{R_{2}}=f$. If $S=R_{1}\times R_{2}$, then the mapping $R\to S$ given by $r\mapsto(re,rf)$ is a bijective ring homomorphism (it respects sums, products and sends $1_{R}$ to $(e,f)=1_{S}$), whose inverse given by $(ae,bf)\mapsto ae+bf$. This shows that $R\cong S$. If $e$ is a nontrivial idempotent, then both $R_{1}$ and $R_{2}$ are nonzero, which proves the implication.

\item[\texttt{(b}~$\Rightarrow$~\texttt{a):}] If $R_{1}$ and $R_{2}$ are nonzero rings, then the element $(1,0)$ is a nontrivial idempotent in the ring $R_{1}\times R_{2}$.

\item[\texttt{(c}~$\Rightarrow$~\texttt{d}~$\Rightarrow$~\texttt{e):}] Obvious.

\item[\texttt{(e}~$\Rightarrow$~\texttt{a}~$\Rightarrow$~\texttt{c):}] See the proof of \Cref{bivalente}.\qedhere

\end{enumerate}
\end{proof}

\noindent We may observe that, like integral domains, which are characterized by the property that $x$ is a prime element in $R[x]$, the class of indecomposable rings also corresponds to a specific property of the algebra generator $x$, namely, the polynomial $x$ is irreducible in $R[x]$ (by \Cref{bivalente}). In the case of reduced rings, since all positive degree polynomials are noninvertible by \Cref{redunits}, this characterization can be specialized in the following form:

\begin{proposition}

A reduced ring $R$ is indecomposable if and only if the polynomial $x$ in $R[x]$ is not a product of two polynomials of positive degree.

\end{proposition}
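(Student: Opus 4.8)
The plan is to derive this statement as a direct consequence of \Cref{bivalente} together with \Cref{redunits}. By \Cref{bivalente}, $R$ is indecomposable if and only if the polynomial $x$ is irreducible in $R[x]$, so it suffices to show that, \emph{under the standing assumption that $R$ is reduced}, irreducibility of $x$ is equivalent to $x$ not being a product of two polynomials of positive degree. The whole proof is essentially bookkeeping on top of results already established.

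For the forward implication, I would suppose $R$ is reduced and indecomposable, so that $x$ is irreducible by \Cref{bivalente}. If we had $x=gh$ with $\deg g>0$ and $\deg h>0$, then, since $R$ is reduced, \Cref{redunits} gives $R[x]^{*}=R^{*}$, so both $g$ and $h$, being of positive degree, are noninvertible; this contradicts the irreducibility of $x$. Hence $x$ cannot be written as a product of two positive-degree polynomials.

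For the converse I would argue by contraposition: assume $R$ is reduced but not indecomposable, and exhibit a factorization of $x$ into two positive-degree polynomials. Choose an idempotent $e\in R$ with $e\neq 0$ and $e\neq 1$. As in the proof of \Cref{bivalente}, one verifies directly that $x=\bigl[ex+(1-e)\bigr]\cdot\bigl[(1-e)x+e\bigr]$. The first factor has leading coefficient $e\neq 0$, hence degree exactly $1$, and the second has leading coefficient $1-e\neq 0$, hence degree exactly $1$; so $x$ is a product of two polynomials of positive degree, completing the contrapositive.

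I do not expect any genuine obstacle here: the argument is routine once \Cref{bivalente} and \Cref{redunits} are available. The only point requiring a bit of attention is confirming that the two explicit factors $ex+(1-e)$ and $(1-e)x+e$ really have positive degree, which is precisely where the hypotheses $e\neq 0$ and $e\neq 1$ are used. It is also worth noting that reducedness enters only in the forward direction, where it serves to exclude the pathological possibility of a positive-degree factor of $x$ that is nevertheless a unit — something that \Cref{redunits} forbids.
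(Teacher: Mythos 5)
Your proof is correct and follows essentially the same route as the paper, which obtains this statement directly from \Cref{bivalente} (equivalently, condition \subcref{pentavalente-d} of \Cref{pentavalente}) together with \Cref{redunits}, the point being exactly the one you make: reducedness makes the noninvertibility of positive-degree factors automatic, and the explicit factorization $x=\bigl[ex+(1-e)\bigr]\cdot\bigl[(1-e)x+e\bigr]$ handles the converse. Nothing is missing; your unpacking of the degree check for the two factors is the same verification implicit in the paper's proof of \Cref{bivalente}.
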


\noindent Finally, in order to express the class of rings $R$ we are interested in, in terms of properties of $R[x]$, we may synthesize as follows:

\begin{proposition}\label{redindec}\leavevmode

\begin{enumproposition}

\item\label{redindec-a} A ring $R$ is reduced if and only if the polynomial $1$ in $R[x]$ is not a product of two polynomials of positive degree.

\item A ring $R$ is reduced and indecomposable if and only if the polynomials $1$ and $x$ in $R[x]$ are not a product of two polynomials of positive degree.

\end{enumproposition}

\end{proposition}

\begin{proof}\leavevmode

\begin{enumerate}

\item If $R$ is reduced, then invertible elements of $R[x]$ are constant by \Cref{redunits}. For the converse, if $a\in R$ and $n\geq 2$ satisfy $a^{n}=0$ and $a^{n-1}\neq 0$, then $1=(1+a^{n-1}x)(1-a^{n-1}x)$.

\item Once item \subcref{redindec-a} above is given, this follows from condition \subcref{pentavalente-d} in \Cref{pentavalente}, as the requirement of noninvertibility of nonconstant elements becomes redundant in a reduced ring by \Cref{redunits}.\qedhere

\end{enumerate}
\end{proof}

\noindent We remark that the result of \Cref{cm+1dividescm-a} actually characterizes reduced indecomposable rings: in fact, let $R$ be a ring such that whenever $c^{m+1}$ divides $c^{m}$, then $c$ is either zero or a unit. On the one hand, if $e\in R$ is idempotent, then obviously $e^{2}$ divides $e$, and therefore $e=0$ or $e$ is a unit, and in the latter case we have $1=ee^{-1}=e^{2}e^{-1}=e$, which shows that $R$ is indecomposable. On the other hand, if $a\in R$ is nilpotent, say $a^{n}=0$, with $n\geq 1$, then obviously $a$ cannot be a unit (recall that $R$ is a nonzero ring), and since $0=a^{n+1}$ trivially divides $0=a^{n}$, it follows that $a=0$, and consequently $R$ is reduced.

\Cref{basic} exhibits some properties of polynomials in one variable, over reduced and/or indecomposable coefficient rings. These properties hold trivially in the particular case in which the coefficient ring is an integral domain, for the product of the leading coefficients of two given polynomials becomes necessarily the leading coefficient of their product. As we show below, these properties also characterize reducedness and/or indecomposability.

Suppose that a ring $R$ satisfies the last conclusion of \Cref{basic-c}, namely: in the ring $R[x]$, divisors of regular constant elements are themselves constant. Since units in $R[x]$ are divisors of the regular element $1$, we conclude that $R[x]^{*}=R^{*}$, so $R$ is reduced by \Cref{redunits}. On the other hand, let $R$ be a ring satisfying the conclusion of \Cref{basic-d}, namely: whenever $f,g\in R[x]$ are nonzero polynomials such that $g\mid f$ and the leading coefficient of $f$ is a unit, then that of $g$ is a unit too. We claim that $R$ is reduced and indecomposable.

In fact, if $a\in R$ satisfies $a^{2}=0$ and we take $f=1$ and $g=1+ax$, then the equality $(1-ax)g=f$ and the hypothesis over $R$ implies that $a$ cannot be the leading coefficient of $g$, so necessarily $a=0$, and this shows that $R$ is reduced. Moreover, if $e\in R$ is idempotent, then by taking $f=x,g=(1-e)x+e$ and using the equality $f=[ex+(1-e)]g$, we conclude that the leading coefficient of $g$ is a unit. Since this leading coefficient is one of $1-e$ or $e$, which are idempotent, and the only invertible idempotent in a ring is $1$, we conclude that $1-e=1$ or $e=1$, showing that $R$ is indecomposable as well.

One may be tempted to prove that the result of \Cref{basic-d} holds if ``unit'' is replaced by ``regular''. Let $R$ be a ring such that, whenever $f,g\in R[x]$ are nonzero polynomials such that $g\mid f$ and the leading coefficient of $f$ is regular, it is the case that the leading coefficient of $g$ is regular as well. By a reasoning entirely similar to that made in the previous paragraph, one concludes that $R$ is reduced and indecomposable; the converse, however, is not true: if $B,p,q$ and $R$ are as in \Cref{Bpq}, then $R$ is reduced and indecomposable. Now $(\overline{p}x+1)(\overline{q}x+1)=(\overline{p}+\overline{q})x+1$ in $R[x]$. Both $\overline{p}$ and $\overline{q}$ are zerodivisors, but $\overline{p}+\overline{q}$ is regular: for if $b\in B$ satisfies $pq\mid(p+q)b$, then $p\mid qb$, and since $p$ is prime and $p\nmid q$, it follows that $p\mid b$, say $b=ps$. Similarly we have $q\mid pb=p^{2}s$, and since $q$ is prime and $q\nmid p$, we conclude that $q\mid s$. Therefore $pq\mid b$, which proves our claim.

\setcounter{footnote}{0}

\subsection{More about constant polynomial functions}\label{nonconstantfpoly}

\noindent Let $R$ be a ring such that the only polynomials in $R[x]$ inducing constant polynomial functions on $R$ are the constant polynomials\footnote{For example, \Cref{constantfpoly} implies that this is the case when $R$ is infinite, reduced and indecomposable.}\!. We claim that if $R$ is also reduced and $g\in R[x]$ takes finitely many values, then $g\in R$ (and, \emph{a posteriori}, $g$ takes only one value).

To prove this, assume the contrary, and let $n\geq 2$ be minimal such that there exists a polynomial $g$ taking exactly $n$ values. If $a$ and $b$ are two such (distinct) values, then the polynomial $f=(a+b-g)g$ takes the value $ab$ when $g$ takes the values $a$ or $b$, and therefore $f$ takes at most $n-1$ values. On the one hand, by minimality of $n$ we necessarily have that $f$ is constant as a polynomial function, so $f\in R$ by the initial hypothesis. On the other hand, if $c$ is the leading coefficient of the nonconstant polynomial $g$, then $a+b-g$ also has positive degree and its leading coefficient equals $-c$. Since $R$ is reduced, we have $-c^{2}\neq 0$, and consequently $f=(a+b-g)g$ has positive degree, a contradiction.

The following examples show that neither reducedness nor indecomposability can be removed from the hypotheses of \Cref{constantfpoly}.

\begin{example}

If $R$ is a Boolean ring (that is, $a^{2}=a$ for all $a\in R$), then $R$ is decomposable unless $R=\F_{2}$, the field with two elements. On the other hand, the discussion in \Cref{Th-red-ind} implies immediately that $R$ is reduced. Finally, by definition the nonconstant polynomial $f=x^{2}-x\in R[x]$ vanishes on all of $R$. As a concrete example of infinite Boolean ring we may take $R$ as the direct product $\F_{2}^{\N}$.

\end{example}

\begin{example}

Let $S$ be the ring of polynomials in infinitely many variables $T_{1},T_{2},\dots$ over the field $\F_{2}$. Let $\Fa$ be the ideal in $S$ generated by the products $T_{i}T_{j}$, with $1\leq i\leq j$, and consider the factor ring $R=S/\Fa$. Denoting the class of $T_{i}$ modulo $\Fa$ by $t_{i}$, we have that every element of $R$ is of the form $p=a_{0}+\sum_{i=1}^{\infty}a_{i}t_{i}$, with $a_{i}=0$ or $1$ for all $i\geq 0$, and $a_{i}\neq 0$ for finitely many $i$. Moreover, $p=0$ if and only if $a_{i}=0$ for all $i$; in particular, all the elements $t_{i}$ are pairwise distinct, so $R$ is infinite.

Since $R$ has characteristic $2$, we have $p^{2}=a_{0}^{2}+\sum_{i=1}^{\infty}a_{i}^{2}t_{i}^{2}=a_{0}^{2}=0$ or $1$, so the nonconstant polynomial $x^{2}(x^{2}-1)\in R[x]$ vanishes on all of $R$, and moreover $p^{2}-p=\sum_{i=1}^{\infty}a_{i}t_{i}$. Thus, $p^{2}=p$ implies $a_{i}=0$ for all $i\geq 1$, that is $p=a_{0}$, which shows that $R$ is indecomposable. Obviously $R$ is not reduced, as $t_{i}\neq 0$ for each $i$ but $t_{i}^{2}=0$.

\end{example}

\subsection{About first-order characterizations of some subclasses of reduced indecomposable polynomial rings}

Recall that in \Cref{definethemall} we wrote the class $\CH$ of reduced indecomposable polynomial rings as the union $\CH_{1}\cup\CH_{2}$, with $\CH_{1}$ being the subclass of rings in $\CH$ where every nonzero integer is invertible, and $\CH_{2}$ the subclass of rings in $\CH$ expressible as $R[x]$, where $R$ is a nonfield of characteristic zero.

The negation of the sentence $\Xi$ defined in the statement of \Cref{XiXi} characterizes the members of $\CH_{1}$ in the class $\CH$. In what follows we construct two sentences that characterize, respectively, those rings in $\CH$ having characteristic zero, and those of the form $R[x]$, with $R$ a field. As a consequence, we obtain a first-order characterization of the subclass $\CH_{2}$ in the class $\CH$. Moreover, since reducedness and indecomposability are finitely (first-order) axiomatizable (see \Cref{Th-red-ind}), we may easily modify these two sentences to characterize the rings mentioned (reduced indecomposable polynomial rings of characteristic zero, and polynomial rings in one variable over a field, respectively) \emph{in the whole class of polynomial rings} (in any set of variables, by \Cref{R[Xfat]}).

Let $S$ be a ring. It is easy to see that $\Char(S)>0$ if and only if $-1\in\ZZ_{S}^{+}$. From this, and recalling that the formula $\Omega(\cdot)$ appearing in \Cref{finalboss} defines $\ZZ_{S}^{+}$ for any ring $S\in\CH$, we conclude that the sentence $\neg\Omega(-1)$ characterizes the rings in $\CH$ having characteristic zero\footnote{We remind the reader that, as a standard application of the compactness theorem, we get that the theory of commutative unital rings of characteristic zero is not finitely axiomatizable.}\!.

On the other hand, the argument in the proof of \Cref{ele1k=uele2k} can be used to characterize, among members of $\CH$, the polynomial rings in one variable over a field. Let $S=R[x]\in\CH$, and consider the sentence
\begin{equation}\label{sentence}
\forall\ell_{1}\,\forall\ell_{2}\,\bigl[\,\,(\ell_{1}\in W\ \wedge\ \ell_{2}\in W\,)\rightarrow\exists u\,\exists\rho\,(\,u\in S^{*}\ \wedge\ \rho\in\{0\}\cup S^{*}\ \wedge\ \ell_{1}=u\ell_{2}+\rho\,)\,\bigr]\,,\tag{$\ddagger$}
\end{equation}
$W$ being as in \Cref{defWL}. Notice that $S^{*}=R^{*}$ by \Cref{redunits}, so in particular the elements $u$ and $\rho$ appearing in \labelcref{sentence} must belong to $R$.

If $R$ is a field, then $W$ coincides with the set $L$ of linear polynomials in $R[x]$, by \Cref{WandL-c}, and it is easy to show that \labelcref{sentence} holds in this case. Conversely, suppose that \labelcref{sentence} holds, and take any $a\in R$: we have $x,x+a\in L\subseteq W$, and therefore there are $u\in S^{*}=R^{*}$ and $\rho\in\{0\}\cup S^{*}=\{0\}\cup R^{*}$ with $x+a=ux+\rho$. In particular we have $a=\rho\in\{0\}\cup R^{*}$, hence $R\subseteq\{0\}\cup R^{*}$, which proves that $R$ is a field.

The characterization above implies the following: let $S=k[x]$, with $k$ being a field and $x$ an indeterminate over $k$. If $R$ is a subring of $S$ and $y\in S$ are such that $y$ is an indeterminate over $R$ and $S=R[y]$, then $R=k$. In fact, as $k$ is a field, we have $S\in\CH$, and the sentence \labelcref{sentence} is true in $S$. Since $S$ is reduced and indecomposable, so is $R$. Therefore, by the characterization made above (using $y$ instead of $x$) we have that $R$ is also a field. Finally, by \Cref{redunits} we have $R^{*}=S^{*}=k^{*}$, and therefore $R=\{0\}\cup R^{*}=\{0\}\cup k^{*}=k$, as claimed\footnote{Another proof runs as follows: Since $R^{*}=k^{*}$, we have $k=\{0\}\cup k^{*}=\{0\}\cup R^{*}\subseteq R$; in particular we have $S=k[x]=R[x]$ (but $x$ is not necessarily an indeterminate over $R$). For the reverse inclusion, notice that any element $r\in R\subseteq k[x]$ can be written in the form $r=a_{0}+a_{1}x+\cdots+a_{n}x^{n}$, for some $n\geq 0$ and some $a_{0},\ldots,a_{n}\in k$ with $a_{n}\neq 0$. If $n$ were nonzero, then $x\in S$ would be a root of the monic polynomial $T^{n}+a_{n-1}a_{n}^{-1}T^{n-1}+\cdots+a_{1}a_{n}^{-1}T+(a_{0}-r)a_{n}^{-1}\in\bigl(k[r]\bigr)[T]\subseteq R[T]$. Thus, $x$ is $R$-integral, and so $R[y]=S=R[x]$ is a finitely generated $R$-module (\cite{Hungerford1980}*{Theorem VIII.5.3}), which is impossible because $y$ is an indeterminate over $R$.}\!. Thus, for any polynomial ring in \emph{one} variable over a field, its coefficient field is unique (the indeterminate can vary, of course: for example, by affine maps).

\pagebreak

\subsection{Comparing powers with logical powers}

\noindent Let $S$ be a ring, and consider the definable subsets $T$ and $U$ of $S$ of \Cref{defsTandUPV}. By \Cref{TandUPV}, every element $p\in T$ satisfies $\pow(p)\subseteq\lpow(p)$. In addition, if $S=R[x]$, with $R$ reduced and indecomposable, then every element $p\in U$ satisfies $\pow(p)=\lpow(p)$.

If we replace ``$p$ is irreducible'' by ``$p\in\lpow(p)$'' in the definition of $T$, then it remains true that $\pow(p)\subseteq\lpow(p)$ for each $p\in T$. The converse is almost true: it is easy to show that if $p\in S$ satisfies $\pow(p)=\lpow(p)$, then $p\in T$. Moreover, for any unit $p$ we have, by \Cref{lpowfurther-c}, that $\lpow(p)=\{(p-1)g+1\colon g\in S\}$. Taking $g=1$ we obtain $p\in\lpow(p)$, and if $h\in\lpow(p)$, say $h=(p-1)g+1$, with $g\in S$, then $ph=(p-1)pg+p=(p-1)(pg+1)+1\in\lpow(p)$. This shows that $S^{*}\subseteq T$ under the modified definition of $T$.

If $p\in U$ is a unit, then we may take $a=p$ in the second condition of the definition of $U$, obtaining $p=1$. Consequently, if we impose the additional condition ``$p\neq 1$'' in the definition of $U$, then $U$ consists entirely of nonunits. As we want all elements $p$ in $U$ to satisfy $\lpow(p)=\pow(p)$, this restriction will be unharmful, because for a unit $p\in S$ we have, in most cases, that $\lpow(p)$ strictly contains $\pow(p)$: namely, when $p-1$ is regular, by \Cref{lpowunitlarge}. Note that this regularity condition is essential for the proofs of our main results to work.

Even with the extra requirement ``$p\neq 1$'' in the definition of $U$, and the modification in the definition of $T$ (``$p$ is irreducible'' by ``$p\in\lpow(p)$''), we are still able to prove that if $S=R[x]$, with $R$ reduced and indecomposable, then $\lpow(p)=\pow(p)$ for each $p\in U$. The proof is almost identical to that of \Cref{TandUPV}, with the following modification: If $p\in U$ and $f\in\lpow(p)$ is infinitely divisible by $p$, then the new definition of $T$ no longer implies that $p$ is irreducible, but we are still able to conclude that $p\in\{0\}\cup R^{*}$ by \Cref{cm+1dividescm-a}. Moreover, $p\in\lpow(p)$ (part of the new definition of $T$) , together with \Cref{p-reduinde-b}, implies that $p$ is either invertible or irreducible. Putting together these facts, we conclude that $p$ is necessarily a unit, which is impossible under the modified definition of $U$.

Finally, consider the modified versions of $T$ and $U$. If $S=R[x]$, with $R$ reduced (not necessarily indecomposable), and if $p\in S$ is nonconstant with regular leading coefficient, and satisfies $\lpow(p)=\pow(p)$, then $p\in T$ (as discussed above) and obviously $p\neq 1$. The proof of the remaining conditions for membership in $U$ is similar as the proof of \Cref{TandUPV-a} for the special case $p=x$, and in this way we conclude that $p\in U$. Notice that \Cref{lp=pindom} provides examples of nonlinear polynomials satisfying the requirements above (the classical example being $R=\mathbb{R}$ and $p=x^{2}+1$). This is in contrast with \Cref{automorphU}, which merely guarantees that linear polynomials with invertible leading coefficient belong to the set $U$ (in the case $R$ reduced and indecomposable).

\enlargethispage*{5mm}

\subsection{Revisiting examples}\label{expandidos}

\noindent It is possible to prove the definability of the integers in $R[x]$, for $R$ as in \Cref{ZxZ}, and as well for $R$ as in some instances of \Cref{C(X|B),B^I,stronglocal}, by constructing a definable set $A$ of $R[x]$ satisfying $\ZZ^{+}\subseteq A\subseteq R$, and applying \Cref{ZisdefinA}. Notice that the rings in \Cref{ZxZ}, as well as some instances of the rings in \nameCrefs{C(X|B)} \labelcref{C(X|B)}\footnote{In general, the ring $R=\mathcal{C}(X,B)$ contains an isomorphic copy of $B$, namely, the subring of constant functions. We claim that $R$ is a field if and only if $B$ is a field and $R=B$. In fact, if $B$ is a field and $R=B$, then obviously $R$ is a field. For the converse, suppose that $B$ is not a field or that $R$ properly contains $B$. In the first case, if $b$ is any nonzero nonunit in $B$, then the constant function with value $b$ has no inverse in $R$. In the second case, some function $f\in R$ takes two distinct values, say $a\neq b$ in $B$, and therefore the function $f-a$ is nonzero (as it takes the value $b-a$) and noninvertible (as it takes the value $0$). In either case we conclude that $R$ is not a field.}\!, \labelcref{B^I,stronglocal}, are nonfields of characteristic zero, and thus they are also covered by \Cref{nonfieldZdef}.

\begin{example}[\Cref{stronglocal}, revisited]

If $R$ is a local and reduced ring, then the set $A=\{f\in R[x]\colon f\in R[x]^{*}\textnormal{ or }f+1\in R[x]^{*}\}$ is definable. We have $A\subseteq R$ by \Cref{redunits}, and $R\subseteq A$ by the definition of local ring. Therefore $R=A$.

\end{example}

\begin{example}[\Cref{C(X|B)}, revisited]

Let $R$ as in \Cref{C(X|B)}. If every nonzero integer in $B$ is invertible, then the same happens to each nonzero integer constant function from $X$ to $B$, so we may apply \Cref{Z+invertible} in this case.

\end{example}

\begin{example}[\Cref{ZxZ}, revisited]\label{ZxZrevisited}

Consider the ring $R[x]$, where $R$ is defined as in \Cref{ZxZ}. Note that an element $(m,n)\in R$ is regular if and only if $m,n\neq 0$.

Let $p=(5,1),q=(1,5)\in R[x]$. The set $B=\{p,q\}$ can be defined\footnotemark\! by the formula
\[\beta(t)\colon\ \ \exists r\,(\,r\mid 1\ \wedge\ r\neq 1\ \wedge\ r\neq -1\ \wedge\ t=3+2r\,)\,.\]
We claim that $\lpow(p)=\pow(p)$ and $\lpow(q)=\pow(q)$. It is easy to see that $p$ is prime in $R$, so it remains prime in $R[x]$\footnotemark\!. Since $p$ is also regular, \Cref{lpowgeneral-d} implies that $\pow(p)\subseteq\lpow(p)$. Conversely, let $h\in\lpow(p)$, and let us denote $h_{m}$ by $(f_{m},g_{m})\in R$. We have that $p-1=(4,0)$ divides $h-1$, so $4$ divides $f_{0}-1$, and thus $f_{0}$ is odd. If $p^{k}$ divides $h$ in $R$, then $5^{k}$ must divide $f_{0}\neq 0$ in $\Z$, and so $h$ cannot be infinitely divisible by $p$. Therefore, by \Cref{lpowgeneral-a}, we can write $h=up^{n}$, with $n\geq 1$ and $u\in R[x]^{*}=R^{*}$ (see \Cref{redunits}) such that $p-1=(4,0)$ divides $u-1$ in $R$. We have $u-1=(0,0),(-2,-2),(0,-2)$ or $(-2,0)$; since $-2$ is not multiple of $0$ or $4$, it follows that $u-1$ must be equal to $(0,0)$, so $h\in\pow(p)$. The proof of $\lpow(q)=\pow(q)$ is analogous.

As a consequence of these two equalities right above, we obtain that the set $C=\{(5^{m}-1,5^{n}-1)\colon m,n\geq 1\}$ is definable by the formula
\begin{align*}
\gamma(t)\colon\ \ \exists r\,\exists s\,\exists v\,\exists w\,[\,\beta(r)&\wedge\ r+s=6\ \wedge\ v\in\lpow(r)\\
&\wedge\ w\in\lpow(s)\ \wedge\ t=v+w-2\,]\,.
\end{align*}
If $D\subseteq R[x]$ is the set of divisors of elements in $C$, then obviously $D$ is also definable; moreover, since $R$ is reduced and $C\subseteq R$ consist entirely of regular elements, it follows from item \subcref{basic-a} of \Cref{basic} that $D\subseteq R$. If $\phi$ denotes the Euler's totient function, then it is well-known that, for any positive integer $a$ not a multiple of $5$, we have that $5^{\phi(a)}-1$ is divisible by both $a$ and $-a$. Therefore $D$ contains all the elements $(d,d)$, with $d\in\Z$ not a multiple of $5$ as a rational integer. Consequently, the set $A$ of elements $t$ such that $t\in D$ or $t+1\in D$ is definable, and it satisfies $\ZZ\subseteq A\subseteq R$.

\end{example}

\addtocounter{footnote}{-1}

\footnotetext{It is not possible to tell apart $p$ from $q$ by using a first-order formula.}

\stepcounter{footnote}

\footnotetext{It is well-known that if $\Fa$ is an ideal in a ring $R$, then $R[x]\Fa$, the ideal in $R[x]$ generated by $\Fa$, is precisely the set $\Fa[x]$ of polynomials with coefficients in $\Fa$, and $R[x]\bigl/\Fa[x]\cong(R/\Fa)[x]$. If $p\in R$ is prime and $\mathfrak{q}=Rp$, then $R[x]p=\mathfrak{q}[x]$, so $R[x]\bigl/R[x]p\cong(R/\mathfrak{q})[x]$, which is an integral domain, and this shows that $p$ remains prime in $R[x]$.}

\begin{example}[\Cref{B^I}, revisited]

Let $R$ be as in \Cref{B^I} with $B=\Z$ and $\Fb=2\Z$, and set $S=R[x]$. We may also think of elements of $S$ as $I$-tuples of integers polynomials whose coefficients in any fixed degree have the same parity. Let $D=\{d\in S^{*}\colon 2d+3\textnormal{ is irreducible}\}, C=\{2d+3\colon d\in D\}$ and $E=\{d+1\colon d\in D\}$. It is easy to check that $C,D$ and $E$ are definable sets, that $D$ is the set of $I$-tuples with one entry equal to $1$ and all other entries equal to $-1$, that $C$ consists precisely of the $I$-tuples, all irreducible, with one entry equal to $5$ and all other entries equal to $1$, and finally, that $E$ consists of those $I$-tuples with one entry equal to $2$ and all other entries equal to zero.

We claim that, for any $c\in C$, one has $\lpow(c)=\pow(c)$. Indeed, for $c\in C$, there must be $j\in I$ such that $c_{j}=5$ and all other entries of $c$ are $1$. If $f\in\pow(c)$, then $c\mid f$ and $c-1\mid f-1$ are obviously satisfied and, if $g\mid f$, then all but one entry of $g$ are $\pm 1$ and the other, $g_{j}$, divides a power of $5$. Thus $g_{j}$ must be a constant, by \Cref{basic-a}, and consequently it is either $\pm 1$ or a multiple of $5$. Therefore $g$ is either invertible or a multiple of $c$.

Conversely, if $f\in\lpow(c)$, then $c-1\mid f-1$ forces all but the $j$-th component of $f-1$ to vanish and $4\mid f_{j}-1$; in particular $f_{j}\neq 0$. Since $c\mid f$, it follows that $f_{j}=m\cdot 5^{n}$, with $n>0$ and $m\equiv 1\pmod 4$ not a multiple of $5$. Furthermore, if $m$ were not invertible, then the element $\tilde{f}$ with $\tilde{f}_{j}=m$ and all other entries equal to $1$, not divisible by $c$, would be a noninvertible divisor of $f\in\lpow(c)$, a contradiction. Therefore $m=\pm 1$. Since $m\equiv 1\pmod 4$, we conclude that $m=1$ and, therefore, $f\in\pow(c)$.

Consider the following formula:
\begin{align*}
\alpha(t)\colon\ \ \forall e\,\bigl[\,e\in E\rightarrow\exists c\,\exists y\,\bigl(\,c\in C&\wedge\ y\in\lpow(c)\\
&\wedge\ [\,t\cdot e\mid y-1\ \vee(t+1)\cdot e\mid y-1\,]\,\bigr)\,\bigr]\,.
\end{align*}
The formula holds whenever multiplication of $t$ or $t+1$ by any element of $E$ divides $y-1$, for some logical power $y$ of a suitable $c\in C$. We claim that $r\in R$ precisely when $\alpha(r)$ holds. Indeed, let $r\in R$ be a constant element, and let $e\in E$. There exists $j\in I$ such that $e$ has all entries equal to zero but its $j$-th entry, which is equal to $2$; then $r\cdot e$ and $(r+1)\cdot e$ have one constant integer entry, namely $2r_{j}$ and $2(r_{j}+1)$, respectively, and all other entries equal to zero. By Euler's theorem, any rational integer not divisible by $5$ divides some element of the form $5^{n}-1$. In view of this, since $2r_{j}$ and $2(r_{j}+1)$ cannot both be a multiple of $5$, and using $\lpow(c)=\pow(c)$, for all $c\in C$, we conclude that one of $r\cdot e$ or $(r+1)\cdot e$ divides $y-1$, for some logical power $y$ of the element $c\in C$ with $5$ in the $j$-th entry and $1$ in all other entries, and so $\alpha(r)$ holds.

Conversely, let $s\in R[x]$ be nonconstant, say $\deg(s_{j})\geq 1$, and consider the element $e\in E$ such that $e_{j}=2$ and all other entries of $e$ are zero. Suppose either $s\cdot e$ or $(s+1)\cdot e$ divides $y-1$, for some $y\in\lpow(c)$ and some $c\in C$. As $y-1$ has only one nonzero entry, which is a constant, and $s\cdot e$ has all but the $j$-th entry equal to zero, we must have that $(y-1)_{j}\neq 0$ is a constant and $(y-1)_{i}=0$ for all $i\neq j$. But both $(s\cdot e)_{j}=2s_{j}$ and $\bigl((s+1)\cdot e\bigr)_{j}=2(s_{j}+1)$ have positive degree and therefore they cannot divide, in the reduced ring $\Z[x]$, the nonzero constant element $(y-1)_{j}$ (by \Cref{basic-a}), proving that $\alpha(s)$ is false. Therefore $R$ is definable and we can just take $A=R$.

\end{example}

\enlargethispage*{15mm}

\subsection{About definability of integers in some nonreduced/decomposable rings}\label{otherrings}

\noindent In this subsection we discuss definability/undefinability of integers in certain rings not considered in our work.

First, we deal with the direct product of two rings. If both rings have positive characteristic, definability of integers obviously holds; moreover, the proof below shows that one can define integers in some cases where exactly one of the rings has characteristic zero, but such a definition is impossible when both rings have characteristic zero. Since decomposable rings are precisely those isomorphic to direct products of two (nonzero) rings (condition \subcref{pentavalente-d} in \Cref{pentavalente}), this shows that the indecomposability condition is essential to prove definability of integers in the most interesting cases.

Part of the result below is the claim discussed right after \Cref{ZxZ}. We would like to reiterate that the proof we will present below is actually a very minor modification of the proof of \cite{AschenbrennerKNS2018}*{Lemma 4.7}.

\begin{proposition}\label{ZnotdefinAxB}

Let $A$ and $B$ be rings, and let $S=A\times B$.

\begin{enumerate}

\item If both $A$ and $B$ have characteristic zero, then the prime subring $\ZZ_{S}$ is not definable in $S$.

\item If $\Char(B)=0$ and $\Char(A)>0$, then $\ZZ_{S}$ is definable in $S$ if and only if the set $D=\{(0_{A},k\cdot 1_{B})\colon \Char(A)\textnormal{ divides }k\}$ is definable in $S$.

\end{enumerate}

\end{proposition}

\begin{proof}

By the Feferman-Vaught theorem, any definable subset of $S$ is a finite union of ``definable rectangles'', that is, subsets of the form $P\times Q$, where $P$ and $Q$ are definable subsets of $A$ and $B$, respectively (\cite{Hodges1993}*{Corollary 9.6.4}). Notice that in this case we have
\[\ZZ_{S}=\{(j\cdot 1_{A},j\cdot 1_{B})\colon j\in\Z\}\,.\]
Let $P\times Q$ be a nonempty definable rectangle contained in $\ZZ_{S}$, and let $m\in\Z$ be fixed such that $m\cdot 1_{A}\in P$. We claim that, if $\Char(B)=0$, then $P=\{m\cdot 1_{A}\}$ (that is, a singleton) and $Q\subseteq B_{m}$, where
\[B_{m}=\{k\cdot 1_{B}\colon k\equiv m\hspace{-2mm}\mod\hspace{-1mm}\Char(A)\}\footnote{Notice that $B_{m}$ only depends on the congruence class of $m$ modulo $\Char(A)$.}\!.\]
In fact, let $n,k\in\Z$ be such that $n\cdot 1_{A}\in P$ and $k\cdot 1_{B}\in Q$. Then the pairs $(m\cdot 1_{A},k\cdot 1_{B})$ and $(n\cdot 1_{A},k\cdot 1_{B})$ belong to $\ZZ_{S}$, so there are $i,j\in\Z$ such that
\[(m\cdot 1_{A},k\cdot 1_{B})=(i\cdot 1_{A},i\cdot 1_{B})\ \ \textnormal{and}\ \ (n\cdot 1_{A},k\cdot 1_{B})=(j\cdot 1_{A},j\cdot 1_{B})\,.\]
The hypothesis $\Char(B)=0$ forces $i=k=j$, hence $n\cdot 1_{A}=i\cdot 1_{A}=k\cdot 1_{A}=j\cdot 1_{A}=m\cdot 1_{A}$. Therefore $n\cdot 1_{A}=m\cdot 1_{A}$ and $k\equiv m\hspace{-1mm}\mod\hspace{-1mm}\Char(A)$, as claimed.

\begin{enumerate}

\item Applying the previous reasoning, and using that $\Char(A)=0$, we conclude that the set $B_{m}$ must be a singleton, which forces the definable rectangle $P\times Q$ to be a singleton. Therefore, any definable subset of $S$ contained in $\ZZ_{S}$ is finite, and since $\ZZ_{S}$ is infinite (because $\Char(S)=0$), we conclude that $\ZZ_{S}$ is not definable in $S$.

\item If $\Char(B)=0$ and $\ZZ_{S}$ is definable, then the previous reasoning implies that $\Char(A)>0$ and that $\ZZ_{S}$ is the union of finitely many definable ``vertical segments''. In particular, the union of some (finite) subfamily of these segments will be equal to the intersection of $\ZZ_{S}$ with the ``$B$-axis''. In other words, the set
\[\ZZ_{S}\cap(0\times B)=\{(0_{A},k\cdot 1_{B})\colon\Char(A)\mid k\}=D\]
is definable. Conversely, if $\Char(A)>0$ and $D$ is definable in $S$, then $\ZZ_{S}$ is the union of finitely many definable translates of $D$, namely
\[\ZZ_{S}=\bigcup_{i=1}^{\Char(A)}D+\{(i\cdot 1_{A},i\cdot 1_{B})\}\,,\]
and therefore $\ZZ_{S}$ is a definable subset of $S$.\qedhere

\end{enumerate}

\end{proof}

\noindent As a specific example, if $A=\Z/n\Z$ and $B=\Z$, then $D=nS$, so $D$ is definable.

Now we turn our attention to definability results in some nonreduced rings.

\begin{proposition}\label{AKNS}

Let $S$ be a ring.

\begin{enumproposition}

\item \textup{(}\cite{AschenbrennerKNS2018}*{Corollary 2.19}\textup{)} If $\Char(S)=0$ and $S$ is bi-interpretable with $\Z$, then $\ZZ_{S}$ is definable.

\item\label{AKNS-b} \textup{(}\cite{AschenbrennerKNS2018}*{Main Theorem}\textup{)} Suppose that $S$ is finitely generated as $\Z$-algebra, and denote the nilradical of $S$ by $N$. We have that $S$ is bi-interpretable with $\Z$ if and only if both $\operatorname{ann}_{\Z}(N)\neq 0$ and $\Spf(S)$, the subset of $\Sp(S)$ of nonmaximal prime ideals of $S$, is nonempty and connected \textup{(}with respect to the Zariski topology of the ambient space\textup{)}.

\end{enumproposition}

\end{proposition}

\noindent In general, if $R$ is a ring such that $\Spf(R[x])$ is connected, then $\Sp(R)$ is connected. In fact, the inclusion $i\colon R\to R[x]$ induces a continuous map $i^{*}\colon\Sp(R[x])\to\Sp(R)$, given by $i^{*}(\Fp)=\Fp\cap R$. If $\Fq\in\Sp(R)$, then $\Fq=i^{*}(\Fq[x])$, and $\Fq[x]\in\Spf(R[x])$ because $R[x]\bigl/\Fq[x]\cong(R/\Fq)[x]$, which is never a field. Therefore $\Sp(R)$ is a continuous image of the connected set $\Spf(R[x])$.

Consequently, if we assume the Boolean prime ideal theorem or that $R$ is Noetherian, then $R$ will be indecomposable (see \Cref{SpecBPI}), and this shows, once again, how crucial the indecomposability condition is to definability of integers.

Despite the main result of our work deals with nonfinitely generated rings, it is limited to reduced ($\operatorname{ann}_{\Z}(N)=\Z$) polynomial rings. In what follows we exhibit an example of a nonreduced, polynomial, finitely generated ring of characteristic zero, which is bi-interpretable with $\Z$. By \Cref{AKNS}, in such ring the prime subring will be definable, but since it is not reduced, the ring is not covered by our result; notice that, by the discussion after \Cref{AKNS}, such example will be necessarily indecomposable.

Let $R$ be a finitely generated ring of characteristic zero, and let $\Fa$ be an ideal in $R$ whose radical is prime and nonmaximal, say $\sqrt{\Fa}=\Fp\in\Spf(R)$. Given integers $m,d>1$, let $\Fb=\Fa^{m}+d\Fp$, and define $S=R/\Fb$. If $\Z\cap\Fb=0$, then $\Char(S)=0$. Moreover, using the inclusions $\Fa^{m}\subseteq\Fb\subseteq\Fp$ we get $\sqrt{\Fb}=\Fp$.

Given a subset $C$ of $R$, let $V(C)$ be the subset of $\Sp(R)$ of prime ideals in $R$ containing the set $C$. It is well-known that $\Spf(S)$ is homeomorphic to
\[V(\Fb)\cap\Spf(R)=V(\sqrt{\Fb})\cap\Spf(R)=V(\Fp)\cap\Spf(R)\,.\]
We have $V(\Fp)\cap\Spf(R)\neq\varnothing$ by nonmaximality of $\Fp$, and $\Fp$ being prime implies that $V(\Fp)\cap\Spf(R)$ is connected: indeed, if $V(\Fp)\cap\Spf(R)\subseteq V(\Fg)\cup V(\Fh)$, then $\Fp\in V(\Fp)$ belongs to either $V(\Fg)$ or $V(\Fh)$, say $V(\Fg)$. Therefore we have $\Fg\subseteq\Fp$ and so $V(\Fp)\subseteq V(\Fg)$. The reasoning above shows then that $\Spf(S)$ is nonempty and connected. Moreover, if $N(S)$ denotes the nilradical of $S$, then $N(S)=\Fp/\Fb$, hence $dN(S)=0$ because $d\Fp\subseteq\Fb$. Thus, the ring $S$ is bi-interpretable with $\Z$, by \Cref{AKNS-b}.

It remains to impose conditions on $S$ in such a manner that $S$ be nonreduced, that is $\Fb\subset\Fp$. We have
\[\Fb=\Fa^{m}+d\Fp\subseteq\Fa^{2}+d\Fp\subseteq\sqrt{\Fa}^{2}+d\Fp=\Fp^{2}+d\Fp\subseteq\Fp\,.\]
Therefore $\Fp=\Fb$ would imply $\Fp=\Fp^{2}+d\Fp=\Fc\Fp$, where $\Fc=\Fp+dR$. By the Cayley-Hamilton theorem (\cite{Eisenbud1995}*{Corollary 4.7}), there exists $c\in\Fc$ such that $(c-1)\Fp=0$. If $\Fp$ contains a regular element, then $c=1$, so $\Fp$ and $dR$ would be comaximal.

Thus, it is sufficient to assume, besides the conditions imposed above, that $\Fp$ contain a regular element, and that the ideal $\Fp+dR$ be proper. As a concrete example, we may take $R=\Z[t]$, where $t$ is an indeterminate, and $\Fa=tR$, so $\sqrt{\Fa}=tR=\Fp$. In this case we have $\Fb=t^{m}R+dtR$, for some $m,d>1$, and $1\notin tR+dR=\Fp+dR$.

\pagebreak

Notice that this example $S=R/\Fb$ of finitely generated ring bi-interpretable with $\Z$ is not a polynomial ring; fortunately, the polynomial version $S[x]$ inherits all the relevant properties of $S$, and therefore constitutes our aimed example. This follows from the identities
\begin{align*}
S[x]\cong&\,R[x]\bigl/\Fb[x]\,,\\
\sqrt{\Fa[x]}=&\,\sqrt{\Fa}[x]\,,\\
\Fb[x]=&\,(\Fa[x])^{m}+d\Fp[x]\,,\\
\Z\cap\Fb[x]=&\,\Z\cap\Fb\,,\\
N(S[x])=&\,N(S)[x]\,,\\
\Fp[x]+dR[x]=&\,(\Fp+dR)[x]\,,
\end{align*}
together with the fact that $V(\Fp[x])\cap\Spf(R[x])$ is nonempty and connected because $\Fp[x]$ is prime and nonmaximal.

\subsection{Further discussion concerning local rings and \texorpdfstring{\textsf{AC}}{\unichar{"1D5D4}\unichar{"1D5D6}}}\label{nonstronglocal}

\noindent It is customary to define a local ring in an alternative way to that given in \Cref{ourlocal}, namely, as a ring with a unique maximal ideal. This property is a straightforward consequence of the definition given in \Cref{ourlocal} (nonunits form an ideal), and it is well-known that, in the presence of the axiom of choice (\textsf{AC}), these definitions are equivalent (we provide below proofs of these facts).

Interestingly enough, the interchangeability between the two notions of locality is not just a consequence of \textsf{AC}, but is indeed equivalent to it. To the best of our knowledge, this is a new condition equivalent to the axiom of choice.

Let $S$ be a ring such that the set $\Fm=S\smallsetminus S^{*}$ of nonunits of $S$ forms an ideal. We claim that $\Fm$ is the unique maximal ideal in $S$: on the one hand, any ideal $\Fa$ strictly containing $\Fm$ must contain a unit, and therefore $\Fa=S$, which shows that $\Fm$ is maximal. On the other hand, if $\Fn$ is a maximal ideal in $S$, then $\Fn$ contains no unit, so $\Fn\subseteq\Fm$, and therefore $\Fn=\Fm$ by maximality of $\Fn$.

The argument above shows that every local ring in the sense of \Cref{ourlocal} has a unique maximal ideal (namely, its set of nonunits), which is the standard definition of ``local ring''. The converse is not true in \textsf{ZF}: in fact, we contend that the assertion ``In every ring with a unique maximal ideal the set of nonunits forms an ideal'' is equivalent to the claim that every (nonzero commutative unital) ring has a maximal ideal. This condition, in turn, is known to be equivalent to the axiom of choice (\cite{Hodges1979}).

To prove our claim, suppose that every nonzero ring has a maximal ideal. By working on quotient rings, we get that every nonunit in a ring belongs to a maximal ideal. Therefore, in a ring with a unique maximal ideal, all nonunits must belong to that maximal ideal, which in turn consists entirely of nonunits. This proves that the set of nonunits of the ring forms an ideal.

For the converse implication, if $A$ is a nonzero ring without maximal ideals, then the ring $S=\Q\times A$ has $0\times A$ as its unique maximal ideal. As we already saw, if the set of nonunits of $S$ were an ideal, then it would be equal to the unique maximal ideal, and so $S\smallsetminus S^{*}=0\times A$; but this equality is impossible, because $(1,0)\in S\smallsetminus S^{*}$ and $(1,0)\notin 0\times A$. This shows that nonunits in the ring $S$ do not form an ideal.

Notice that, incidentally, the ring $S$ above is not indecomposable (by condition \subcref{pentavalente-b} in \Cref{pentavalente}), so we cannot change ``local'' by ``the ring has a unique maximal ideal'' in \Cref{stronglocal}.

\pagebreak

\subsection{Diagram of implications}

\noindent In the diagram below we show the implications between the conditions of reducedness/indecomposability of a ring $R$, and properties of the subsets $\pow(x)$ and $\lpow(x)$ in $R[x]$. The converse of implication \texttt{(m)} will be denoted by \texttt{(m)'}.
\newlength{\coli}
\setlength{\coli}{\widthof{\,$\lpow(x)\subseteq\pow(x)$\,}}
\newlength{\colii}
\setlength{\colii}{\widthof{\,and indecomposable\,}}
\newlength{\coliii}
\setlength{\coliii}{\widthof{\,$R$ is indecomposable\,}}
\newlength{\alturai}
\settoheight{\alturai}{\heightof{\parbox{\colii}{\centering\strut
$R$ is reduced\\ and indecomposable\strut}}}
\newlength{\alturaii}
\settoheight{\alturaii}{\heightof{\parbox{\coliii}{\centering\strut $x$ is irreducible\\ in $R[x]$\strut}}}
\newlength{\alturaiii}
\settoheight{\alturaiii}{\heightof{\parbox{\colii}{\centering\vphantom{\LARGE L} $x\in\lpow(x)$ and\\ $\lpow(x)\subseteq\pow(x)$\strut}}}
\tikzcdset{row sep/normal=1.4cm,column sep/normal=1.4cm}
\[\scalebox{0.95}{%
\begin{tikzcd}[ampersand replacement=\&]
\parbox[c][\alturai][c]{\coli}{\centering $\pow(x)=\lpow(x)$} \arrow[r,Leftarrow,shift left=1.2,"\texttt{(1)}"] \arrow[r,Rightarrow,shift right=1.2,"\texttt{(1)'}"'] \arrow[d,Rightarrow,shift right=1.2,"\texttt{(3)}"'] \arrow[d,Leftarrow,shift left=1.2,"\texttt{(3)'}"] \& \parbox[c][\alturai][c]{\colii}{\centering $R$ is reduced\\ and indecomposable} \arrow[r,Rightarrow,"\texttt{(2)}"] \& \parbox[c][\alturai][c]{\coliii}{\centering $R$ is indecomposable} \arrow[d,Rightarrow,shift right=1.2,"\texttt{(4)}"'] \arrow[d,Leftarrow,shift left=1.2,"\texttt{(4)}'"] \\
\parbox[c][\alturaii][c]{\coli}{\centering $\pow(x)\subseteq\lpow(x)$} \arrow[r,Rightarrow,"\texttt{(5)}"] \arrow[d,Rightarrow,"\texttt{(7)}"] \& \parbox[c][\alturaii][c]{\colii}{\centering $x\in\lpow(x)$} \arrow[r,Rightarrow,shift left=1.2,"\texttt{(6)}"] \arrow[r,Leftarrow,shift right=1.2,"\texttt{(6)'}"'] \arrow[dr,phantom,Leftrightarrow,start anchor={south east},end anchor={north west},"{\mbox{\SMALL\texttt{(9)}}}"' left=3mm,"{\mbox{\SMALL\texttt{(9)'}}}" right=3mm] \arrow[dr,Rightarrow,shift right=1.2,start anchor={south east},end anchor={north west}] \arrow[dr,Leftarrow,shift left=1.2,start anchor={south east},end anchor={north west}] \& \parbox[c][\alturaii][c]{\coliii}{\centering $x$ is irreducible\\ in $R[x]$}\\
\parbox[c][\alturaiii][c]{\coli}{\centering $R$ is reduced} \arrow[r,Leftarrow,"\texttt{(10)}"] \arrow[d,Rightarrow,"\texttt{(11)}"] \& \parbox[c][\alturaiii][c]{\colii}{\centering $x\in\lpow(x)$ and\\ $\lpow(x)\subseteq\pow(x)$} \arrow[u,Rightarrow,"\texttt{(8)}"'] \& \parbox[c][\alturaiii][c]{\coliii}{\centering $\lpow(x)\neq\varnothing$} \\
\makebox[\coli]{\centering $\lpow(x)\subseteq\pow(x)$} \& \&
\end{tikzcd}
}\]
{\small\centering
\bgroup\tabcolsep=0\tabcolsep
\renewcommand{\arraystretch}{1.3}
\begin{tabular}{rlp{13.5cm}}
\texttt{(1)}&\hspace{5mm} & \Cref{lpowx=powx-b}.\\

\texttt{(1)}&\texttt{'} & \texttt{(3)+(7)}, together with \texttt{(3)+(5)+(6)+(4)'}.\\

\texttt{(2)}& & Obvious.\\

$\neg$\texttt{(2)}&\texttt{'} & Counterexample: $R=\Z/4\Z$.\\

\texttt{(3)}& & Obvious.\\

\texttt{(3)}&\texttt{'} & \texttt{(7)+(11)}.\\

\texttt{(4)},\texttt{(4)}&\texttt{'} & \Cref{bivalente}.\\

\texttt{(5)}& & Obvious.\\

$\neg$\texttt{(5)}&\texttt{'} & \texttt{(4)+(6)'+(5)'+(3)'+(1)'} imply \texttt{(2)'}, which is false. But \texttt{(4)},\texttt{(6)'},\texttt{(3)'} and \texttt{(1)'} are true.\\

\texttt{(6)}& & \Cref{lpowx=powximplies}.\\

\texttt{(6)}&\texttt{'} & \Cref{lpowgeneral-c}.\\

\texttt{(7)}& & \Cref{lpowx=powximplies}.\\
%
$\neg$\texttt{(7)}&\texttt{'} & \texttt{(7)'+(3)'+(1)'} is false (counterexample: $R=\Z\times\Z$). But \texttt{(3)'} and \texttt{(1)'} are true.\\

\texttt{(8)}&\hspace{5mm} & Obvious.\\

$\neg$\texttt{(8)}&\texttt{'} & \texttt{(4)+(6)'+(8)'+(10)} imply \texttt{(2)'}, which is false. But \texttt{(4)},\texttt{(6)'} and \texttt{(10)} are true.\\

\texttt{(9)},\texttt{(9)}&\texttt{'} & \Cref{lpowfurther-b}.\\

\texttt{(10)}& & \Cref{lpowx=powximplies}.\\

$\neg$\texttt{(10)}&\texttt{'} & \texttt{(10)'+(8)+(6)+(4)'} is false (counterexample: $R=\Z\times\Z$). But \texttt{(8)},\texttt{(6)} and \texttt{(4)'} are true.\\

\texttt{(11)}& & \Cref{lpowx=powx-a}.\\

$\neg$\texttt{(11)}&\texttt{'} & Counterexample: any decomposable nonreduced ring $R$, such as $R=\Z\times(\Z/4\Z)$: for $R$ decomposable implies $\lpow(x)=\varnothing$, by \texttt{(9)'+(6)+(4)'}, and so we trivially have $\lpow(x)\subseteq\pow(x)$.

\end{tabular}
\egroup
}

\begin{bibdiv}
\begin{biblist}
\bibselect{def-arXiv6}
\end{biblist}
\end{bibdiv}

\end{document}